\documentclass[11pt]{article}

\usepackage{graphicx}
\usepackage{latexsym,amsmath,amsfonts,amscd, amsthm, dsfont}
\usepackage{bm,color}
\usepackage{epsfig,verbatim,epstopdf,graphics}
\usepackage{subfigure}
\usepackage{changebar}
\usepackage{multirow}

\usepackage[ruled,vlined]{algorithm2e}

\usepackage{yhmath}
 \usepackage{booktabs} 
 \usepackage{tikz}
\usepackage{verbatim}
\usetikzlibrary{arrows,backgrounds,snakes,shapes}

 \numberwithin{equation}{section}

\graphicspath{{./}{./figure/}}
\allowdisplaybreaks

\topmargin-.5in \textheight9in \oddsidemargin0in \textwidth6.5in

\newtheoremstyle{plainNoItalics}{}{}{\normalfont}{}{\bfseries}{.}{ }{}

\theoremstyle{plain}
\newtheorem{thm}{Theorem}[section]

\theoremstyle{plainNoItalics}

\newtheorem{rem}[thm]{Remark}
\newtheorem{prop}[thm]{Proposition}
\newtheorem{exa}[thm]{Example}

\newcommand{\bx}{{\bf x}}

\newcommand{\bv}{{\bf v}}
\newcommand{\bw}{{\bf w}}
\newcommand{\bB}{{\bf B}}
\newcommand{\bE}{{\bf E}}
\newcommand{\bJ}{{\bf J}}

\newcommand{\ba}{{\bf a}}

\newcommand{\bU}{{\bf U}}

\newcommand{\bV}{{\bf V}}

\newcommand{\bS}{{\bf S}}
\newcommand{\mT}{{\mathcal T}}

\newcommand{\mM}{{\mathcal M}}

\newcommand{\br}{{\bf r}}

\newcommand{\innerw}[1]{\langle {#1} \rangle_{\bf w}}

\newcommand{\beq}{\begin{equation}}
\newcommand{\eeq}{\end{equation}}
\newcommand{\bit}{\begin{itemize}}
\newcommand{\eit}{\end{itemize}}
\newcommand{\be}{\begin{eqnarray}}
\newcommand{\ee}{\end{eqnarray}}
\newcommand{\beno}{\begin{eqnarray*}}
\newcommand{\eeno}{\end{eqnarray*}}


\newcommand{\Rmnum}[1]{\expandafter\@slowromancap\romannumeral #1@}
\makeatother


\begin{document}

\baselineskip=1.8pc


\begin{center}
{\bf
A conservative low rank tensor method for the Vlasov dynamics 
}
\end{center}

\vspace{.2in}
\centerline{
 Wei Guo\footnote{
Department of Mathematics and Statistics, Texas Tech University, Lubbock, TX, 70409. E-mail:
weimath.guo@ttu.edu. Research is supported by NSF grant NSF-DMS-1830838 and NSF-DMS-2111383, Air Force Office of Scientific Research FA9550-18-1-0257.
} and 
Jing-Mei Qiu\footnote{Department of Mathematical Sciences, University of Delaware, Newark, DE, 19716. E-mail: jingqiu@udel.edu. Research supported by NSF grant NSF-DMS-1818924 and 2111253, Air Force Office of Scientific Research FA9550-18-1-0257.}
}

\bigskip
\noindent
{\bf Abstract.} In this paper, we propose a conservative low rank tensor method to approximate nonlinear Vlasov solutions. The low rank approach is based on our earlier work \cite{guo2021lowrank}. It takes advantage of the fact that the differential operators in the Vlasov equation are tensor friendly, based on which we propose to dynamically and adaptively build up low rank solution basis by adding new basis functions from discretization of the differential equation, and removing basis from a singular value decomposition (SVD)-type truncation procedure. For the discretization, we adopt a high order finite difference spatial discretization together with a second order strong stability preserving multi-step time discretization. 

While the SVD truncation will remove the redundancy in representing the high dimensional Vlasov solution, it will destroy the conservation properties of the associated full conservative scheme. In this paper, we develop a conservative truncation procedure with conservation of mass, momentum and kinetic energy densities. The conservative truncation is achieved by an orthogonal projection onto a subspace spanned by $1$, $v$ and $v^2$ in the velocity space associated with a weighted inner product. Then the algorithm performs a weighted SVD truncation of the remainder, which involves a scaling, followed by the standard SVD truncation and rescaling back. The algorithm is further developed in high dimensions with hierarchical Tucker tensor decomposition of high dimensional Vlasov solutions, overcoming the curse of dimensionality. An extensive set of nonlinear Vlasov examples are performed to show the effectiveness and conservation property of proposed conservative low rank approach. Comparison is performed against the non-conservative low rank tensor approach on conservation history of mass, momentum and energy.

\vfill

{\bf Key Words:} Low rank; high order SVD; Conservative truncation; Hierarchical Tuck decomposition of tensors; Mass and momentum conservation; Vlasov Dynamics.  
\newpage


\section{Introduction}

The Vlasov-Poisson (VP) system is known as a fundamental model in plasma physics which describes the dynamics of dilute charged particles due to self-induced electrostatic forces from a statistical viewpoint. The numerical challenges of realistic Vlasov simulations include the high-dimensionality of the phase space, features with multiple scales in time and in phase space, preservation of physical invariants, among many others.

The Particle-In-Cell method employs a collection of sampled macro particles to represent the distribution function and is widely used for the plasma simulations mainly due to its distinct dimension-independent  rate of convergence \cite{dawson1983particle,birdsall2004plasma}. On the other hand, such a method suffers the inherent statistical noise, and hence may fail to accurately capture the physics of interest. For example,  if the resolution of the tail of the distribution function is desired, a noise-free grid-based deterministic method is a better choice.  Over the past two decades, the development of deterministic Vlasov solvers has attracted lots of research interest, see e.g. \cite{filbet2003comparison}.  Meanwhile, a conventional deterministic Vlasov simulation in a realistic and high-dimensional setting is prohibitively expensive because of  the curse of the dimensionality and the associated huge computational and storage cost. Several dimension reduction techniques have been developed in the literature to alleviate the curse of dimensionality for the Vlasov equation and other high-dimensional partial differential equations (PDEs). One such example is the sparse grid approach \cite{smolyak1963quadrature,zenger1991sparse,griebel1990parallelizable}, which can effectively reduce the computational complexity and is well-suited for the problems with moderately high dimensions. For the Vlasov simulations, we mention the sparse grid semi-Lagrangian (SL) method \cite{kormann2016sparse} and the sparse grid discontinuous Galerkin method \cite{guo2016sparse1,tao2018sparseguo}. Recently, the tensor approach emerges as a promising tool for feasible high-dimensional simulations. Such an approach aims to extract the underlying low rank structure of the data with advanced tensor decomposition, potentially breaking the curse of dimensionality. The popular tensor formats include the canonical polyadic (CP) format \cite{hitchcock1927expression,carroll1970analysis,harshman1970foundations,kolda2009tensor}, Tucker format \cite{tucker1966some,de2000multilinear}, hierarchical Tucker (HT) format \cite{hackbusch2009new,grasedyck2010hierarchical}, and tensor train (TT) format \cite{oseledets2009breaking,oseledets2011tensor,oseledets2012solution}. There are several pioneering works employing low rank tensor approach for nonlinear simulations, including the low rank SL method in the TT format  \cite{kormann2015semi}, a low rank method with the CP format based on the underlying Hamiltonian formulation \cite{ehrlacher2017dynamical}, a dynamical low rank method proposed in \cite{einkemmer2018low,einkemmer2020low} for which the dynamical low rank approximation of the Vlasov solution is evolved on the low rank manifold using a tangent space projection, and a dynamical tensor approximations for high dimensional linear and nonlinear PDEs based on functional tensor decomposition and dynamic tensor approximation \cite{dektor2020dynamically}.
 
With the existing understanding of the low rank solution structure for the Vlasov dynamics, as well as the observation that the differential operator in the Vlasov equation can be represented in the tensorized form, in \cite{guo2021lowrank}, we developed a low rank tensor approach to dynamically and adaptively build up low rank solution basis, by adding new basis functions from discretization of the PDE and then removing basis from an singular value decomposition (SVD)-type truncation procedure. In particular, we start from a low rank solution in a tensor format and add additional basis by applying the well-established high order finite difference upwind method coupled with the strong-stability-preserving (SSP) multi-step time discretizations \cite{gottlieb2011strong}; the solutions are being further updated by an SVD-type truncation to remove redundant bases.  We further generalize the algorithm to high-dimensional cases with the HT decomposition, which attains a storage complexity that is linearly scaled with the dimension, overcoming the curse of dimensionality. An issue associated with the low rank algorithm in \cite{guo2021lowrank} is the loss of mass conservation in the SVD truncation step. In fact, it is well-known that the VP system conserves mass, momentum, and energy \emph{locally} by respecting a set of macroscopic moment equations. The high order finite difference scheme with an SSP multi-step time integrator can be shown to locally preserve the macroscopic mass and momentum, i.e. when taking moments of the kinetic scheme, one will arrive at a consistent and locally conservative discretization of the corresponding macroscopic equations.  Here ``locally conservative" means that the scheme can be written in a flux difference form, where the fluxes represent the amount of macroscopic quantities transported between neighboring computational cells. However, the SVD truncation in the low rank algorithm would destroy local and global conservation property of the original numerical scheme. 

There exist several techniques developed to correct the conservation errors for low rank methods in various settings.   In \cite{kormann2015semi}, the low rank solution is rescaled so that  the total mass is conserved, and a similar mass correction technique is proposed in \cite{peng2020low} for a dynamical low rank method. 
In \cite{allmann2022parallel}, moment fitting is applied to the low rank solution so that the corrected moments match those solved from the macroscopic fluid equations.
In \cite{einkemmer2019quasi}, a dynamical low rank method with Lagrangian multipliers is developed to improve conservation properties for the total mass and momentum as well as local projected moment equations. 
More recently, along the same line, the truly local conservation of mass, momentum, and energy is attained for the dynamical low rank method \cite{einkemmer2021mass}. The idea is to fix certain basis functions in the dynamical low rank approximation and employ a modified Petrov–Galerkin formulation which is compatible with the remainder of the approximation.
 
Inspired by the recent work in \cite{einkemmer2021mass}, in this paper, we develop a novel locally mass, momentum and energy conservative truncation algorithm under the framework of the low rank tensor approach \cite{guo2021lowrank}. In particular, we apply an orthogonal projection operator to the low rank solution in a weighted inner product spaces of phase variables, to extract exactly the first few moments (i.e. the mass, momentum and kinetic energy density functions) of the low rank solution after the ``add basis" step; a weighted SVD truncation is then applied to the remainder of the projection. Such a truncation is called ``conservative truncation" throughout the paper. We further develop the conservative truncation algorithm for the 2D2V VP system with the HT tensor format using a dimension tree that separates the spatial and phase variables. For high order HT tensors, an additional projection step is needed, after the high order SVD (HOSVD) truncation of the remaining term, to ensure conservation of macroscopic variables from the first few moments. The low rank tensor algorithm with the conservative projection is theoretically proved and numerically verified to be locally mass and momentum conservative. 

This paper is organized as follows. In Section 2, we introduce the kinetic Vlasov model and the corresponding macroscopic conservation laws. In Section 3, we first review the low rank tensor approach for the 1D1V Vlasov equation in Section 3.1; then we introduce the orthogonal projection at the continuous level to extract exactly the first few moments of Vlasov solution in Section 3.2; we further extend such an orthogonal projection to the discrete level and develop the main conservative truncation algorithm in Section 3.3. In Section 4, we develop the conservative truncation algorithm for the 2D2V Vlasov model, and show that the low rank algorithm locally conserves the mass and momentum at the macroscopic level. In Section 5, we present an extensive set of 1D1V and 2D2V numerical results to demonstrate the effectiveness and the conservation properties of the low rank tensor algorithm.  We conclude the main contributions of the paper and comment on future directions in Section 6. 
\section{The kinetic Vlasov model and the corresponding macroscopic systems}
We consider the VP system 
\beq
\frac{\partial f}{\partial t}  
+  {\bf{v}} \cdot \nabla_{\bf{x}}  f 
+ {\bf{E}} ({\bf{x}},t) \cdot \nabla_{\bf{v}}  f = 0,
\label{vlasov1}
\eeq
\beq
 {\bf E}( {\bf x},t) = - \nabla_{\bf x} \phi({\bf x},t),  \quad -\triangle_{\bf x} \phi ({\bf x},t) = {{\bf \rho} ({\bf x},t)} - \rho_0,
\label{poisson}
\eeq
which describes the probability distribution function $f({\bf x}, {\bf v},t)$ of electrons in collisionless plasma. 
Here ${\bf E}$ is the electric field, and $\phi$ is the self-consistent electrostatic potential. $f$ couples to the long range fields via the charge density, ${\bf \rho}({\bf x},t) = \int_{\Omega_{\bv}} f({\bf x}, {\bf v},t) d {\bf v} $, where we take the limit of uniformly distributed infinitely massive ions in the background.  The VP system describes the movement of electrons due to self-induced electric field ${\bf E}$ determined by the Poisson equation. 

The Vlasov dynamics are well-known to conserve several physical invariants. In particular, let 
\begin{align}
\label{eq: mass_d}
\mbox{mass density:}& \quad \rho (\bx, t) = \int_{\Omega_{\bv}} f(\bx, \bv,t) d \bv, \\
\label{eq: current_d}
\mbox{current density:} &\quad\bJ (\bx, t) = \int_{\Omega_{\bv}}f(\bx, \bv,t) \bv d \bv,\\
\label{eq: kenergy_d}
\mbox{kinetic energy density:} &\quad \kappa(\bx,t) = \frac{1}{2} \int_{\Omega_{\bv}} \bv^{2}  f(\bx, \bv,t) d \bv,\\
\label{eq: energy_d}
\mbox{energy density:} &\quad e(\bx,t)=\kappa(\bx,t)+\frac{1}{2} \bE(\bx)^2.
\end{align}
Then, by taking a few first moments of the Vlasov equation,
the following conservation laws of mass, momentum and energy can be derived
\begin{align}
\rho_t + \nabla_\bx \cdot \bJ &= 0\label{eq:mass}\\
\partial_{t} \bJ +\nabla_{\bx} \cdot \mathbf{\sigma}&= \rho\bE \label{eq:mom}\\
\partial_{t} e +\nabla_{\bx} \cdot \mathbf{Q}& =0,\label{eq:ener} 
\end{align}
where $ \sigma(t, \bx)=\int_{\Omega_{\bv}}(\bv \otimes \bv) f(\bx, \bv,t) d \bv$ and $\mathbf{Q}(\bx,t) =\frac12\int_{\Omega_{\bv}}\bv\bv^2 f(\bx, \bv,t) d \bv$. 
It is well-known that the local conservation property of numerical schemes is critical to capture correct entropy solutions of hyperbolic systems such as macroscopic equations \eqref{eq:mass}-\eqref{eq:ener}. 

\section{A low rank tensor approach for the Vlasov dynamics with local conservation}

For simplicity of illustrating the basis idea, we only discuss a 1D1V example in this section.

\subsection{Review of a low rank tensor approach for Vlasov dynamics \cite{guo2021lowrank}}
The low rank tensor approach \cite{guo2021lowrank} is designed based on the assumption that our solution at time $t$ has a low-rank representation in the form of 
\begin{equation}
\label{eq: fn1}
f(x, v, t) = \sum_{l=1}^{r} \left(C_l(t) \ \ U_l^{(1)}(x, t) U_l^{(2)}(v, t)\right),
\end{equation}
where $\left\{U_l^{(1)}(x, t)\right\}_{l=1}^{r}$ and $\left\{U_l^{(2)}(v, t)\right\}_{l=1}^{r}$ are a set of time-dependent low rank {unit length orthogonal} basis in $x$ and $v$ directions, respectively, $C_l$ is the coefficient for the basis $U_l^{(1)}(x, t)U_l^{(2)}(v, t)$, and $r$ is the representation rank.  \eqref{eq: fn1} can be viewed as a Schmidt decomposition of functions in $(x, v)$ by truncating small singular values up to rank $r$. 

We assume a finite difference discretization of $f$ on a truncated 1D1V domain of $[x_{\min}, x_{\max}] \times [-v_{\max}, v_{\max}]$ with uniform $N_x \times N_v$ grid points 
\beq
\label{eq: x_grid}
x_{\text{grid}}: \quad x_{\min}=x_1< \cdots < x_i < \cdots < x_{N_x} = x_{\max}, 
\eeq
\beq
\label{eq: v_grid}
v_{\text{grid}}: \quad -v_{\max}=v_1< \cdots < v_j <\cdots < v_{N_v} = v_{\max}. 
\eeq
The numerical solution ${\bf f} \in \mathbb{R}^{N_x \times N_v}$, as an approximation to point values of the solution on the grids \eqref{eq: x_grid}-\eqref{eq: v_grid}, has the corresponding low-rank counterpart to \eqref{eq: fn1} as
\begin{equation}
\label{eq: fn2}
{\bf f} = \sum_{l=1}^{r} \left(C_l \ \ {\bf U}_l^{(1)}  \otimes {\bf U}_l^{(2)}\right), \quad 
(\mbox{or element-wise:} \quad
{\bf f}_{ij} = \sum_{l=1}^{r}  C_l {\bf U}_{l, i}^{(1)} {\bf U}_{l, j}^{(2)}), 
\end{equation}
where ${\bf U}_l^{(1)} \in \mathbb{R}^{N_x}$ and ${\bf U}_l^{(2)} \in \mathbb{R}^{N_v}$ can be viewed as approximations to corresponding basis functions in \eqref{eq: fn1}. \eqref{eq: fn2} can also be viewed as an SVD of the matrix ${\bf f} \in \mathbb{R}^{N_x \times N_v}$. The associated storage cost is $\mathcal{O}(r N)$, where we assume $N=N_x = N_v$. 

Our low rank  tensor approach adaptively updates low-rank basis and associated coefficients by two steps: an adding basis step by conservative hyperbolic solvers and a removing basis step via an SVD-type truncation. We consider a simple first order forward Euler discretization of 1D1V Vlasov equation \eqref{vlasov1} to illustrate the main idea.  We assume the solution in the form of \eqref{eq: fn2} with superscript $n$ for the solution at $t^n$.
\begin{enumerate}
\item {\em Add basis and obtain an intermediate solution ${\bf f}^{n+1, *}$.} 
A forward Euler discretization of \eqref{vlasov1} gives
\begin{equation}
\label{eq: fn3}
{f}^{n+1, *} = {f}^n - \Delta t (v \partial_x ({f}^n) + E^n \partial_v ({f}^n)).
\end{equation}
Here the electric field $E^n$ is solved by a Poisson solver. Thanks to the tensor friendly form of the Vlasov equation, at the fully discrete level, ${\bf f}^{n+1, *}$ can be represented in the following low-rank format:
\begin{equation}
\label{eq:lowrankmethod}
{\bf f}^{n+1, *} = \sum_{l=1}^{r^n} C_l^n \left[  \left( {\bf U}_l^{(1), n} \otimes {\bf U}_l^{(2), n}\right)  - \Delta t 
\left( D_x {\bf U}_l^{(1), n} \otimes v \star {\bf U}_l^{(2), n} + E^n \star {\bf U}_l^{(1), n} \otimes D_v {\bf U}_l^{(2), n}
\right)\right],
\end{equation}
where $D_x$ and $D_v$ represent high order {\em locally conservative upwind discretization} of spatial differentiation terms, and $\star$ denotes an element-wise multiplication operation. 
For example the discretization of $D_x {\bf U}_l^{(1), n} \otimes v \star {\bf U}_l^{(2), n}$ follows 
\begin{equation}
D^+_x {\bf U}_j^{(1), n} \otimes v^+ \star {\bf U}_j^{(2), n} + D^-_x {\bf U}_j^{(1), n} \otimes v^- \star {\bf U}_j^{(2), n}, 
\end{equation}
where $D^+_x$ and $D^-_x$ are a fifth order finite difference discretization of positive and negative velocities respectively, with $v^+ = \max(v, 0)$ and $v^-=\min(v, 0)$. 
Similarly, the discretization of $E^n \star {\bf U}_l^{(1), n} \otimes D_v {\bf U}_l^{(2), n}$ follows 
 \beq
 \bE^{n, +} \star {\bf U}_j^{(1), n} \otimes D^+_v {\bf U}_j^{(2), n}+\bE^{n, -} \star {\bf U}_j^{(1), n} \otimes D^-_v {\bf U}_j^{(2), n}
 \eeq
where $D^+_v$ and $D^-_v$ are a fifth order upwind finite difference discretization of positive and negative velocities respectively, with $\bE^+ = \max(\bE, 0)$ and $\bE^-=\min(\bE, 0)$. 
\item {\em Remove basis of ${\bf f}^{n+1, *}$ to update solution ${\bf f}^{n+1}$.} Since the number of basis has increased in a single step update, we perform an SVD-type truncation to remove redundant bases with a prescribed threshold $\varepsilon$. The truncation step has no guarantee of any mass, momentum or energy conservation property. 
The removing basis step costs $\mathcal{O}(r^2 N + r^3)$, where $r$ is the SVD rank of the numerical solution.  
\end{enumerate}
In this two-step process, both the basis and coefficients are updated. Extensions to schemes with high order spatial and temporal discretizations and to high dimensional problems, are developed in \cite{guo2021lowrank}. The low rank approach  \cite{guo2021lowrank} is built upon the classical high order methods for conservation laws and kinetic equations, yet it optimizes the computational efficiency by dynamically building low rank global basis and updating the corresponding coefficients via a SVD truncation procedure. While the SVD truncation significantly reduces the computational storage and CPU time, it also destroys the conservation property in the truncation step.

\subsection{An orthogonal projection with mass, momentum and energy conservation} 
To preserve the local mass, momentum and energy in the truncation step, following a similar idea in \cite{einkemmer2021mass}, we define a weighted inner product space and the associated norm for functions in $v$ as follows, 
\[
L^2_w(\mathbb{R}) = \{f: \mathbb{R} \rightarrow \mathbb{R}: \|f\|_w < \infty\}.
\]
where
\beq
\label{eq: inner_prod}
\langle f,  g \rangle_w = \int f(v) g(v)w(v)dv, \quad \|f\|_w =\sqrt{\langle f,  f \rangle_w}.
\eeq
The weight function $w(v)$ is chosen to have exponential decay so that $N \doteq \text{span}(1, v, v^2) \subset L^2_w(\mathbb{R})$. For example, we can take $w(v) = \exp(-v^2/2)$. Now we are ready to define an orthogonal decomposition of $f$ with respect to the subspace $N$. 

In the context of Vlasov dynamics, we consider $f(x, v)$ with a $v$-direction projection. 
We first scale function
\beq
\label{eq: rescale_f}
\tilde{f} = \frac{1}{w(v)}f. 
\eeq
Then we find $P_N(\tilde{f})\in N$ s.t.
\beq
\label{eq: projc}
\int P_{N}(\tilde{f}(x, v))  g(v) w(v)dv
= \int \tilde{f}(x, v)  g(v) w(v)dv = \int {f}(x, v)  g(v)dv, \quad \forall g\in N. 
\eeq
By taking $g = 1, v, v^2$, $P_{N}(\tilde{f}(x, v)) w(v)$ preserves the mass density $\rho(x)$, current density $J(x)$ and kinetic energy density $\kappa(x)$ of $f$ defined in \eqref{eq: mass_d}-\eqref{eq: kenergy_d}. In fact, we have the \emph{conservative} decomposition of $f$ as in the following Proposition.
\begin{prop}
\label{prop: f_decomp}
Let $w(v) = \exp(-v^2/2)$ and the weighted inner product defined as in \eqref{eq: inner_prod}.
 $f\in L^2_w(\mathbb{R})$ can be decomposed as
\beq
\label{eq: f_decom}
f = w(v) (P_{{N}}(\tilde{f}) + (I-P_{{N}})(\tilde{f})) 
\doteq w(v)(\tilde{f}_1 + \tilde{f}_2)
\doteq f_1 + f_2,
\eeq
where $f_1$ admits an explicit representation
 \begin{equation}\label{eq:f1_cont}
 f_1= w(v) \tilde{f}_1 = w(v) (c_1  + c_2 v + c_3 (v^2-c)),
  \end{equation}
with $c= \frac{\langle1, v^2\rangle_w}{\langle1, 1\rangle_w}$,  
 $c_1 = \frac{\rho(x)}{\|1\|_w^2}$, $c_2 =  \frac{J(x)}{\|v\|_w^2}$, and 
$c_3=\frac{2\kappa(x)-\rho(x)c}{ \|v^2-c\|_w^2}$, and $\rho(x)$, $J(x)$ and $\kappa(x)$ are as defined in \eqref{eq: mass_d}-\eqref{eq: kenergy_d}. 
$f_1$ preserves the mass, momentum and kinetic energy density of $f$, while $f_2$ has zero mass, current and kinetic energy density.
\end{prop}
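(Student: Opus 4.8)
\medskip
\noindent\textbf{Proof sketch (proposed).}
The plan is to read \eqref{eq: f_decom} as the Gram--Schmidt orthogonalization of the monomials $\{1,v,v^2\}$ inside the weighted space $L^2_w$, followed by a direct evaluation of the projection coefficients. First I would verify that $\{1,\,v,\,v^2-c\}$ is an orthogonal basis of $N$ for $\langle\cdot,\cdot\rangle_w$: since $w(v)=\exp(-v^2/2)$ is even, the integrands of $\langle 1,v\rangle_w$ and of $\langle v,v^2-c\rangle_w=\langle v,v^2\rangle_w-c\langle v,1\rangle_w$ are odd and hence vanish, while $\langle 1,v^2-c\rangle_w=\langle 1,v^2\rangle_w-c\langle 1,1\rangle_w=0$ holds precisely because $c=\langle 1,v^2\rangle_w/\langle 1,1\rangle_w$. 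The three basis vectors are nonzero, so $\|1\|_w,\|v\|_w,\|v^2-c\|_w>0$, and $N\subset L^2_w$ by the exponential decay of $w$; thus the orthogonal projection $P_N$ onto $N$ is well defined.

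Second, in this orthogonal basis the projection is
\[
P_N(\tilde f)=\frac{\langle \tilde f,1\rangle_w}{\|1\|_w^2}\,1+\frac{\langle \tilde f,v\rangle_w}{\|v\|_w^2}\,v+\frac{\langle \tilde f,v^2-c\rangle_w}{\|v^2-c\|_w^2}\,(v^2-c),
\]
and it remains to identify the three numerators. The key point, already contained in \eqref{eq: projc}, is that the weight cancels the factor $1/w$ in $\tilde f=f/w$: for any polynomial $g$ one has $\langle \tilde f,g\rangle_w=\int (f/w)\,g\,w\,dv=\int f\,g\,dv$. Taking $g=1$, $g=v$, $g=v^2-c$ and using the definitions \eqref{eq: mass_d}--\eqref{eq: kenergy_d} gives the numerators $\rho(x)$, $J(x)$, and $\int f v^2\,dv-c\int f\,dv=2\kappa(x)-c\,\rho(x)$, respectively. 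This reproduces exactly $c_1,c_2,c_3$ as stated, hence the explicit formula \eqref{eq:f1_cont} for $f_1=w\,\tilde f_1$.

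Third, for the conservation statements I would invoke the defining orthogonality of the projection, $\langle \tilde f-P_N(\tilde f),\,g\rangle_w=0$ for all $g\in N$, i.e. $\langle \tilde f_2,g\rangle_w=0$; by the same weight-cancellation this says $\int f_2\,g\,dv=0$ for $g\in\{1,v,v^2\}$, which is precisely the assertion that $f_2=w\tilde f_2$ has vanishing mass, current and kinetic energy density. Since $f=f_1+f_2$, these moments are then carried entirely by $f_1$, so $\int f_1\,dv=\rho$, $\int f_1 v\,dv=J$, and $\tfrac12\int f_1 v^2\,dv=\kappa$.

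I do not expect a genuine obstacle: the only step requiring a little care is the orthogonalization, i.e. checking that the constant $c$ is chosen exactly so as to decouple the quadratic mode from the constant mode (orthogonality to the $v$-mode being automatic by parity), everything else being bookkeeping that turns weighted inner products against $\tilde f$ into ordinary velocity moments of $f$. One minor technical caveat worth recording is that what is actually used is finiteness of the first three velocity moments of $f$ (equivalently, that $\langle \tilde f,g\rangle_w$ is finite for $g\in N$), so that $\rho,J,\kappa$ and $P_N(\tilde f)$ are meaningful; this is consistent with the stated hypothesis and certainly holds for the rapidly decaying distributions arising in the Vlasov dynamics.
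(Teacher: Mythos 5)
Your proposal is correct and follows essentially the same route as the paper's (much terser) proof: choose $c$ so that $\{1,v,v^2-c\}$ is $w$-orthogonal, read off $c_1,c_2,c_3$ by testing the projection identity \eqref{eq: projc} against this basis (using that the weight cancels so $\langle \tilde f,g\rangle_w=\int f g\,dv$ gives the moments $\rho$, $J$, $2\kappa-c\rho$), and obtain the conservation/zero-moment statements from the orthogonality of $\tilde f_2$ to $N$. Your added parity checks and the remark on finiteness of the first three moments are sensible elaborations, not deviations.
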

\begin{proof}
We first compute $c$, so that $\{1, v, v^2-c\}$ is a set of orthogonal basis. By taking $g=1, v, v^2-c$ in \eqref{eq: projc}, we can determine the constants $c_1$, $c_2$, and $c_3$. By construction of the orthogonal projection,  $f_1$ preserves the mass, momentum and kinetic energy density of $f$, while $f_2$ has zero of them.
\end{proof}

\subsection{Low rank truncation with mass, momentum and energy conservation}
Similar to the weighted inner product \eqref{eq: inner_prod}, the projection operator \eqref{eq: projc} and the conservative decomposition of $f$ in \eqref{eq: f_decom}, we define their corresponding discrete analogue. Consider ${\bf f}, {\bf g} \in \mathbb{R}^{N_v}$, where ${\bf f}$ and ${\bf g}$ can be viewed as function values of $f$ and $g$ at $v_\text{grid}$ \eqref{eq: v_grid}, respectively.
We define the standard $l^2$ inner product as
\beq
\label{eq: inner_prod_2_d}
\langle {\bf f},  {\bf g} \rangle = h_v \sum_j f_j g_j,  
\eeq
where $h_v$ is the mesh size in $v$-direction, serving as the quadrature weights for the uniform $v_{\text{grid}}$. We also define weighted inner product and the associated norm as
\beq
\label{eq: inner_prod_d}
\langle {\bf f},  {\bf g} \rangle_{\bf w} = \sum_j f_j g_j w_j, \quad 
\quad \|{\bf f}\|_{\bf w} =\sqrt{\langle {\bf f},  {\bf f} \rangle_{\bf w}}
\eeq
${\bf w}\in \mathbb{R}^{N_v}$ with $w_j = w(v_j) h_v$ is the quadrature weights for $v$-integration with weight function $w(v)$. Correspondingly, we let
\[
l^2_{\bf w} = \{{\bf f}\in\mathbb{R}^{N_v}: \|{\bf f}\|_{\bf w} < \infty\}.
\]
We denote ${\bf 1}_v\in \mathbb{R}^{N_v}$ the vector of all ones, $\bv$ and $\bv^2$ $\in \mathbb{R}^{N_v}$  the coordinates and coordinates square of $v_\text{grid}$ \eqref{eq: v_grid}, respectively. Consider the subspace $\mathcal{N}\doteq \text{span}\{{\bf 1}_v, \bv, \bv^2\}\subset l^2_w$, a conservative low rank truncation of numerical solution ${\bf f}\in \mathbb{R}^{N_x\times N_v} $ written in the low rank form of \eqref{eq: fn2} can be obtained from steps below. 
\begin{enumerate}
\item {\bf Compute macroscopic quantities of ${\bf f}$.} We compute the discrete macroscopic charge, current and kinetic energy density ${\boldsymbol \rho}$, ${\bf J}$ and ${\boldsymbol \kappa} \in \mathbb{R}^{N_x}$ by quadrature 
   \begin{align}
\left(\begin{array}{l}
{\boldsymbol \rho}\\
{\bf J}\\
{\boldsymbol \kappa} 
\end{array}
\right )
 = \sum_{l=1}^{r} C_l
 \left 
 \langle \bU^{(2)}_{l}, 
 \left(\begin{array}{l}
 {\bf 1}_v \\
\bv\\
\frac12\bv^2
\end{array}
\right )
\right \rangle
\ \bU^{(1)}_l. 
\label{eq:rho_j_kappa}
\end{align}   
They are the discrete analog of \eqref{eq: mass_d}, \eqref{eq: current_d} and \eqref{eq: energy_d}, i.e.
\begin{equation*}
    \bm{\rho}(i) = \langle {\bf f}(i, :), {\bf 1}_v\rangle, \quad
    {\bf J}(i) = \langle {\bf f}(i, :), \bm{v} \rangle, \quad
    \bm{\kappa}(i) =  \langle \bm{ f}(i, :), \frac12\bm{ v}^2 \rangle, \quad
     \quad i=1, \cdots, {N_x}. 
\end{equation*}
\item {\bf Scale.} Similar to \eqref{eq: rescale_f}, we scale ${\bf f}$ as 
\beq
\label{eq: rescale}
\tilde{\bf f} = \frac{1}{{\bf w}} \star {\bf f} =  \sum_{l=1}^{r} \left(C_l \ \ {\bf U}_l^{(1)}  \otimes \left(\frac{1}{{\bf w}} \star  {\bf U}_l^{(2)}\right)\right).
\eeq
\item {\bf Project.} We perform an orthogonal projection of $\tilde{\bf f}$ with respect to the inner product \eqref{eq: inner_prod_d} onto subspace $\mathcal{N}$, i.e.
\beq
\label{eq: proj}
\langle P_{\mathcal{N}}(\tilde{\bf f}), {\bf g} \rangle_\bw
= \langle \tilde{\bf f}, {\bf g} \rangle_\bw, 
\quad \forall {\bf g}\in \mathcal{N}. 
\eeq
Similar to Proposition~\ref{prop: f_decomp}, ${\bf w}\star P_{\mathcal{N}}(\tilde{\bf f})$ preserves the mass, momentum and kinetic energy densities in the discrete sense. We show below in Proposition~\ref{prop: f_decomp_d} a conservative decomposition of ${\bf f}$.
\begin{prop}
\label{prop: f_decomp_d}
${\bf f}$ can be decomposed as
\beq
\label{eq: f_decom_d}
{\bf f} = {\bf w} \star (P_{\mathcal{N}}(\tilde{{\bf f}}) + (I-P_{\mathcal{N}})(\tilde{{\bf f}})) 
\doteq {\bf w} \star (\tilde{{\bf f}}_1 + \tilde{{\bf f}}_2)
\doteq {\bf f}_1 + {\bf f}_2
\eeq
where $f_1$ can be represented as  a rank three tensor
 \begin{equation}\label{eq:f1}
 {\bf f}_1= {\bf w} \star  \tilde{{\bf f}}_1 =  \sum_{k=1}^3 \ \mM_k \otimes ({\bf w} \star \bV_k),
  \end{equation}
   where $\bV_1 = {\bf 1}_v$, $\bV_2 ={\bf v}$, and $\bV_3 = {\bf v}^2- c {\bf 1}_v$, and 
   $c=\frac{\langle {\bf 1}_v, {\bf v}^2\rangle_{\bf w}}{\|{\bf 1}_v\|_{\bf w}^2}$, 
   \beq
   \label{eq: mM}
   \mM_1 = \frac{\boldsymbol \rho}{\|{\bf 1}_v\|_\bw^2}, \quad \mM_2 = \frac{\bf J}{\|{\bf v}\|_{\bf w}^2}, \quad  
    \mM_3 = \frac{2 {\boldsymbol \kappa}-c{\boldsymbol \rho}}{\|{\bf v}^2- c {\bf 1}_v\|_{\bf w}^2}.
    \eeq
   ${\boldsymbol \rho}$, ${\bf J}$ and ${\boldsymbol \kappa}$ are the discrete mass, momentum and kinetic energy density of ${\bf f}$ as in \eqref{eq:rho_j_kappa}. ${\bf f}_1$ preserves the discrete mass, momentum and kinetic energy density of ${\bf f}$, while ${\bf f}_2$ has zero of them, i.e.   $
\langle {\bf f}_2(i, :), {\bf 1}_v\rangle
=
\langle {\bf f}_2(i, :), {\bf v}\rangle
=
\langle {\bf f}_2(i, :), {\bf v}^2\rangle
=0, \quad i=1, \cdots, {N_x}.
$
\end{prop}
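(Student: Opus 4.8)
The plan is to transcribe the proof of Proposition~\ref{prop: f_decomp} to the discrete setting, with the projection $P_{\mathcal{N}}$ acting only in the $v$-slot while the spatial index $i$ is carried along as a free parameter; the projection coefficients then assemble into vectors in $\mathbb{R}^{N_x}$ rather than scalars.

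The first step is to orthogonalize the spanning set of $\mathcal{N}$ with respect to $\langle\cdot,\cdot\rangle_\bw$. Because $w(v)=\exp(-v^2/2)$ is even and the velocity grid \eqref{eq: v_grid} is symmetric about the origin (a uniform grid on $[-v_{\max},v_{\max}]$ satisfies $v_{N_v+1-j}=-v_j$, hence $w_j=w(v_j)h_v=w_{N_v+1-j}$), every odd discrete moment vanishes: $\langle{\bf 1}_v,\bv\rangle_\bw=\sum_j v_j w_j=0$ and $\langle\bv,\bv^2\rangle_\bw=\sum_j v_j^3 w_j=0$. Thus $\bV_2=\bv$ is already $\bw$-orthogonal to $\bV_1={\bf 1}_v$ and to $\bv^2$, and a single Gram--Schmidt step suffices: $\bV_3=\bv^2-c\,{\bf 1}_v$ with $c=\langle{\bf 1}_v,\bv^2\rangle_\bw/\|{\bf 1}_v\|_\bw^2$ makes $\langle\bV_1,\bV_3\rangle_\bw=0$, so $\{\bV_1,\bV_2,\bV_3\}$ is a $\bw$-orthogonal basis of $\mathcal{N}$.

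Next I would write the orthogonal projection in closed form: on the $v$-slice $\tilde{\bf f}(i,:)$ it reads $P_{\mathcal{N}}(\tilde{\bf f})(i,:)=\sum_{k=1}^3\big(\langle\tilde{\bf f}(i,:),\bV_k\rangle_\bw/\|\bV_k\|_\bw^2\big)\bV_k$. Collecting the $i$-dependent coefficients into $\mM_k\in\mathbb{R}^{N_x}$ gives $\tilde{\bf f}_1=P_{\mathcal{N}}(\tilde{\bf f})=\sum_{k=1}^3\mM_k\otimes\bV_k$, and hence ${\bf f}_1=\bw\star\tilde{\bf f}_1=\sum_{k=1}^3\mM_k\otimes(\bw\star\bV_k)$, which is manifestly a rank-three tensor. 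To identify $\mM_k$ with \eqref{eq: mM}, I would use that the scaling $\tilde{\bf f}=\frac{1}{\bw}\star{\bf f}$ cancels the weight inside $\langle\cdot,\cdot\rangle_\bw$, so that $\langle\tilde{\bf f}(i,:),{\bf g}\rangle_\bw$ reduces to the plain discrete $v$-moment of ${\bf f}(i,:)$ against ${\bf g}$; taking ${\bf g}={\bf 1}_v,\bv,\bv^2$ returns $\boldsymbol\rho(i)$, ${\bf J}(i)$, $2\boldsymbol\kappa(i)$, and by linearity $\langle\tilde{\bf f}(i,:),\bV_3\rangle_\bw=2\boldsymbol\kappa(i)-c\,\boldsymbol\rho(i)$. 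Dividing by $\|\bV_k\|_\bw^2$ reproduces $\mM_1,\mM_2,\mM_3$ exactly.

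Finally, conservation follows directly from the defining property \eqref{eq: proj} of the $\bw$-orthogonal projection. For every ${\bf g}\in\mathcal{N}$ and every $i$, the common $\bw$-weighting relates $\langle{\bf f}_1(i,:),{\bf g}\rangle=h_v\langle\tilde{\bf f}_1(i,:),{\bf g}\rangle_\bw$ and $\langle{\bf f}(i,:),{\bf g}\rangle=h_v\langle\tilde{\bf f}(i,:),{\bf g}\rangle_\bw$, while $\langle\tilde{\bf f}_1(i,:),{\bf g}\rangle_\bw=\langle P_{\mathcal{N}}(\tilde{\bf f})(i,:),{\bf g}\rangle_\bw=\langle\tilde{\bf f}(i,:),{\bf g}\rangle_\bw$ by \eqref{eq: proj}; hence $\langle{\bf f}_1(i,:),{\bf g}\rangle=\langle{\bf f}(i,:),{\bf g}\rangle$, and specializing ${\bf g}={\bf 1}_v,\bv,\bv^2$ shows that ${\bf f}_1$ carries the same discrete mass, current and kinetic energy density as ${\bf f}$, so ${\bf f}_2={\bf f}-{\bf f}_1$ has zero of each. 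I do not expect a genuine obstacle: the statement is the discrete mirror of Proposition~\ref{prop: f_decomp}. The only care needed is bookkeeping --- tracking the quadrature normalizations ($h_v$ versus $w_j=w(v_j)h_v$) so the scaling and the weight cancel cleanly, keeping the $v$-projection and the free $x$-index separated so the coefficients emerge as $\mathbb{R}^{N_x}$ vectors, and using the grid symmetry that lets $\bV_1,\bV_2$ be orthogonal without further correction (absent symmetry, the same conclusion holds after a full three-vector Gram--Schmidt, only with messier constants).
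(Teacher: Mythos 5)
Your proposal is correct and takes essentially the same route as the paper: the paper's own proof just invokes the argument of Proposition~\ref{prop: f_decomp} with the discrete weighted inner product --- orthogonalize $\{{\bf 1}_v,{\bf v},{\bf v}^2\}$ to $\{{\bf 1}_v,{\bf v},{\bf v}^2-c{\bf 1}_v\}$, read off the projection coefficients by testing against these basis vectors, and obtain conservation by construction of the orthogonal projection --- which is exactly what you carry out, with the grid-symmetry justification of orthogonality made explicit. The only cosmetic issue is the $h_v$ convention in the element-wise scaling by ${\bf w}$ (an ambiguity already present in the paper's notation), which, as you note, cancels in the final conservation identities.
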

 \begin{proof} 
The proof follows in the same manner as Proposition \ref{prop: f_decomp}, but considering the discrete weighted inner product and the associated norm \eqref{eq: inner_prod_d}.  
\end{proof}
\item {\bf Truncate in $l^2_{\bf w}$.} To ensure the conservation in the truncation process, we propose to perform an SVD truncation of $\tilde{\bf f}_2$ in the decomposition \eqref{eq: f_decom_d} with respect to the weighted inner product \eqref{eq: inner_prod_d}. Because of the weight function, we scale $\tilde{\bf f}_2$ by ${\sqrt{\bf w}} \star \tilde{\bf f}_2$, where $\sqrt{\cdot}$ is in the element-wise sense and $\star$ is for the element-wise multiplication. Then, a standard SVD truncation procedure is applied, denoted as $\mathcal{T}_\varepsilon ({\sqrt{\bf w}} \star \tilde{\bf f}_2)$. Here and below, $\varepsilon$ is truncation threshold. Lastly, we rescale $\mathcal{T}_\varepsilon ({\sqrt{\bf w}} \star \tilde{\bf f}_2)$ back and obtain the truncated $\tilde{\bf f}_2$. Such a weighted truncation writes
\beq
\label{eq: cons_trun_f2}
\mathcal{T}^{\bf w}_{\varepsilon} (\tilde{\bf f}_2) =\frac{1}{\sqrt{\bf w}}  \star \mathcal{T}_\varepsilon (\sqrt{\bf w} \star \tilde{\bf f}_2).
\eeq
In summary, ${\bf f}_2$ is truncated to
\[{\bf w}\star\mathcal{T}^{\bf w}_{\varepsilon} (\tilde{\bf f}_2) = \sqrt{\bf w}  \star \mathcal{T}_\varepsilon (\sqrt{\bf w} \star \tilde{\bf f}_2)= \sqrt{\bf w}  \star \mathcal{T}_\varepsilon (\frac{1}{\sqrt{\bf w}} \star {\bf f}_2).
\] 
\item {\bf Update.} We obtain the low rank truncation of ${\bf f}$ with local mass, momentum and energy conservation, denoted as  
\beq
T_c({\bf f}) = {\bf f}_1 + {\bf w}\star\mathcal{T}^{\bf w}_{\varepsilon} (\tilde{\bf f}_2). 
\label{eq: Tc}
\eeq
We call the proposed truncation \eqref{eq: Tc} the conservative truncation. 
The following proposition guarantees that the local conservation of mass, momentum, and kinetic energy density of ${\bf f}$ is preserved in the proposed truncation procedure of $T_c({\bf f})$. 
\begin{prop}\label{prop:conservation}
$T_c({\bf f})$ has the same discrete charge, current and kinetic energy density as ${\bf f}$. 
\end{prop}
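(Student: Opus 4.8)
The plan is to reduce the statement to a single structural fact about the truncated SVD, namely that it never enlarges the row space of a matrix, and then to push the vanishing-moment property of $\tilde{\bf f}_2$ through the scaling--truncate--rescale procedure. Recalling \eqref{eq: Tc} and the decomposition \eqref{eq: f_decom_d} of Proposition~\ref{prop: f_decomp_d}, we have
\[
T_c({\bf f}) = {\bf f}_1 + {\bf w}\star\mathcal{T}^{\bf w}_{\varepsilon}(\tilde{\bf f}_2), \qquad {\bf f} = {\bf f}_1 + {\bf f}_2 ,
\]
and Proposition~\ref{prop: f_decomp_d} already gives that ${\bf f}_1$ carries exactly the discrete charge, current and kinetic energy density of ${\bf f}$, while ${\bf f}_2$ carries none of them. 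Hence it suffices to prove that the truncated remainder ${\bf w}\star\mathcal{T}^{\bf w}_{\varepsilon}(\tilde{\bf f}_2) = \sqrt{\bf w}\star\mathcal{T}_\varepsilon(\sqrt{\bf w}\star\tilde{\bf f}_2)$ also has vanishing discrete charge, current and kinetic energy density, i.e. that each of its rows, viewed as a vector in $\mathbb{R}^{N_v}$, is $l^2$-orthogonal to ${\bf 1}_v$, $\bv$, and $\bv^2$.

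First I would translate the moment annihilation into a Euclidean orthogonality for the scaled matrix ${\bf A}\doteq\sqrt{\bf w}\star\tilde{\bf f}_2$. Since ${\bf f}_2 = {\bf w}\star\tilde{\bf f}_2$, the property $\langle {\bf f}_2(i,:),{\bf p}\rangle=0$ for ${\bf p}\in\{{\bf 1}_v,\bv,\bv^2\}$ reads $\sum_j w_j\,\tilde f_{2,ij}\,p_j = 0$, which rewrites as $\sum_j\big(\sqrt{w_j}\,\tilde f_{2,ij}\big)\big(\sqrt{w_j}\,p_j\big) = 0$; that is, every row of ${\bf A}$ is orthogonal, in the ordinary Euclidean dot product on $\mathbb{R}^{N_v}$, to $\sqrt{\bf w}\star{\bf p}$. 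Equivalently, $\mathrm{span}\{\sqrt{\bf w}\star{\bf 1}_v,\ \sqrt{\bf w}\star\bv,\ \sqrt{\bf w}\star\bv^2\}$ lies in the orthogonal complement of the row space of ${\bf A}$ (here the $v$-index plays the role of the column index of the $N_x\times N_v$ matrix ${\bf A}$).

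Then comes the only nontrivial ingredient: writing $\mathcal{T}_\varepsilon({\bf A}) = \sum_{k=1}^{\tilde r}\sigma_k{\bf u}_k{\bf z}_k^\top$ for the SVD triples kept above the threshold $\varepsilon$, each row of $\mathcal{T}_\varepsilon({\bf A})$ equals $\sum_k\sigma_k u_{k,i}\,{\bf z}_k^\top$, a linear combination of right singular vectors of ${\bf A}$, so the row space of $\mathcal{T}_\varepsilon({\bf A})$ is contained in the row space of ${\bf A}$. Therefore each row of $\mathcal{T}_\varepsilon({\bf A})$ inherits the Euclidean orthogonality to $\sqrt{\bf w}\star{\bf p}$, and rescaling back by $\sqrt{\bf w}$ gives, for every $i$ and every ${\bf p}\in\{{\bf 1}_v,\bv,\bv^2\}$,
\[
\big\langle \big(\sqrt{\bf w}\star\mathcal{T}_\varepsilon({\bf A})\big)(i,:),\ {\bf p}\big\rangle = h_v\sum_j \sqrt{w_j}\,[\mathcal{T}_\varepsilon({\bf A})]_{ij}\,p_j = h_v\sum_j [\mathcal{T}_\varepsilon({\bf A})]_{ij}\,(\sqrt{\bf w}\star{\bf p})_j = 0 .
\]
Hence ${\bf w}\star\mathcal{T}^{\bf w}_{\varepsilon}(\tilde{\bf f}_2)$ has zero discrete mass, current and kinetic energy density, and adding ${\bf f}_1$ recovers exactly those of ${\bf f}$, which is the claim. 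I do not expect a genuine obstacle: the point needing care is stating precisely that the truncated SVD does not enlarge the row space of ${\bf A}$, together with the bookkeeping of where $\sqrt{\bf w}$ enters — scaling by $\sqrt{\bf w}$ \emph{before} truncation is exactly what converts the weighted-inner-product orthogonality of $\tilde{\bf f}_2$ into a plain Euclidean one that the standard SVD respects.
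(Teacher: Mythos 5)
Your proposal is correct and takes essentially the same route as the paper: reduce via Proposition~\ref{prop: f_decomp_d} to showing that ${\bf w}\star\mathcal{T}^{\bf w}_{\varepsilon}(\tilde{\bf f}_2)$ has zero discrete charge, current and kinetic energy density, and then argue that the scale--truncate--rescale procedure preserves orthogonality to $\mathcal{N}$. The only difference is that you spell out, via the fact that the truncated SVD does not enlarge the row space of $\sqrt{\bf w}\star\tilde{\bf f}_2$, the step the paper simply asserts (that truncation ``performed in the same inner product space'' keeps $\mathcal{T}^{\bf w}_{\varepsilon}(\tilde{\bf f}_2)$ orthogonal to $\mathcal{N}$), which also makes the row-wise, i.e.\ local, nature of the conservation explicit.
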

\begin{proof} With Proposition~\ref{prop: f_decomp_d}, it is sufficient to show that ${\bf w}\star\mathcal{T}^{\bf w}_{\varepsilon} (\tilde{\bf f}_2)$ has zero charge, current and kinetic energy density. Since $\tilde{\bf f}_2$ is orthogonal to $\mathcal{N}$ with respect to the weighted inner product \eqref{eq: inner_prod_d}, and its truncation is performed in the same inner product space, $\mathcal{T}^{\bf w}_{\varepsilon} (\tilde{\bf f}_2)$ is also orthogonal to $\mathcal{N}$, i.e.
\[
\langle \mathcal{T}^{\bf w}_{\varepsilon} (\tilde{\bf f}_2), g\rangle_{\bf w} =0, \quad g = 1,\, {\bf v},\, {\bf v}^2,
\]
i.e. ${\bf w}\star\mathcal{T}^{\bf w}_{\varepsilon} (\tilde{\bf f}_2)$ has zero discrete charge, current and kinetic energy density.
\end{proof}
\end{enumerate}

Next we establish the local conservation of mass and momentum in the low rank tensor approach with the conservative truncation \eqref{eq: Tc}. Since the full algorithm (without truncation) does not have energy conservation property, the low rank tensor scheme does not preserve energy conservation.  If the associated full-rank method is able to locally preserve the energy density, so will the corresponding the low rank method with the proposed conservative truncation. 
\begin{prop}(Local mass and momentum conservation of the low rank tensor approach with conservative truncation.) If the discrete differential operators $D_x,\,D_v$ employed are conservative, i.e., can be written in a flux difference form, and linear, i.e., can preserve linear relations, then the proposed low rank method with an SSP multi-step time integrator preserves the mass and momentum locally; that is the schemes for ${\boldsymbol\rho}^n$ and $\bJ^n$, from integrating the scheme on ${\bf f}^{n}$, are  consistent and conservative discretization of the macroscopic moment equations \eqref{eq:mass}-\eqref{eq:mom}. 
\label{prop: cons}
\end{prop}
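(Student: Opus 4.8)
The plan is to take the discrete $v$-moments of the full low rank update and show that, term by term, they collapse to a locally conservative (flux-difference in $x$) and consistent discretization of the macroscopic mass and momentum equations \eqref{eq:mass}--\eqref{eq:mom}, using three ingredients: the exactness of the ``add basis'' step, the assumed flux form and linearity of $D_x,D_v$, and the fact (Proposition~\ref{prop:conservation}) that the conservative truncation $T_c$ leaves $\boldsymbol\rho$ and $\bJ$ untouched.

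First I would work out a single forward Euler stage. Since \eqref{eq:lowrankmethod} is an \emph{exact} tensor rewriting of the full finite difference scheme \eqref{eq: fn3}, the moment functionals $\langle\,\cdot\,,{\bf 1}_v\rangle$ and $\langle\,\cdot\,,\bv\rangle$ acting in the $v$-slot commute with the tensor products: for instance $\langle D_x\bU_l^{(1),n}\otimes(\bv\star\bU_l^{(2),n}),{\bf 1}_v\rangle = D_x\bU_l^{(1),n}\,\langle\bU_l^{(2),n},\bv\rangle$, and summing against $C_l^n$ over $l$ reproduces $D_x\bJ^n$. Applying $\langle\,\cdot\,,{\bf 1}_v\rangle$ to \eqref{eq:lowrankmethod} thus gives $\boldsymbol\rho^{n+1,*}=\boldsymbol\rho^n-\Delta t\,D_x\bJ^n$ plus the contribution of the force term, $-\Delta t\sum_l C_l^n(\bE^n\star\bU_l^{(1),n})\langle D_v\bU_l^{(2),n},{\bf 1}_v\rangle$, which vanishes because $D_v$ is in flux-difference form and the basis is numerically compactly supported in $v$, so $\langle D_v(\cdot),{\bf 1}_v\rangle=0$; the resulting update for $\boldsymbol\rho$ is manifestly conservative and consistent with \eqref{eq:mass}. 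Applying $\langle\,\cdot\,,\bv\rangle$ gives $\bJ^{n+1,*}=\bJ^n-\Delta t\,D_x\boldsymbol\sigma^n+\Delta t\,\bE^n\star\boldsymbol\rho^n$, where $\boldsymbol\sigma^n=\sum_l C_l^n\bU_l^{(1),n}\langle\bv\star\bU_l^{(2),n},\bv\rangle$ is again in flux-difference form and the source term comes from the hypothesis that $D_v$ \emph{preserves linear relations}: a discrete summation-by-parts on the uniform $v$-grid, compatible with the upwind $\bE^{\pm}$ splitting and using compact support at $v=\pm v_{\max}$, yields $\langle D_v(\cdot),\bv\rangle=-\langle(\cdot),{\bf 1}_v\rangle$, so the force term collapses exactly to $\bE^n\star\boldsymbol\rho^n$, a consistent discretization of $\rho\bE$ in \eqref{eq:mom}. (The identical computation with the weight $\tfrac12\bv^2$ would give local energy conservation whenever the underlying full scheme conserves energy, which is the remark preceding the proposition.)

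Then I would insert the truncation and lift to the multi-step integrator. By Proposition~\ref{prop:conservation}, $T_c({\bf f}^{n+1,*})$ has the same discrete $\boldsymbol\rho$ and $\bJ$ as ${\bf f}^{n+1,*}$, so replacing ${\bf f}^{n+1,*}$ by ${\bf f}^{n+1}=T_c({\bf f}^{n+1,*})$ does not change any of the moment identities above; moreover $\bE^n$ is computed from $\boldsymbol\rho^n$ only through the Poisson solve, which truncation preserves, so that step is unaffected. An SSP multi-step method writes ${\bf f}^{n+1}$ as a convex combination of forward-Euler--type building blocks $\alpha_i{\bf f}^{n+1-i}+\Delta t\,\beta_i L({\bf f}^{n+1-i})$ applied to previously truncated solutions; taking $v$-moments and using linearity of $\langle\,\cdot\,,{\bf 1}_v\rangle$ and $\langle\,\cdot\,,\bv\rangle$ turns the updates for $\boldsymbol\rho^n$ and $\bJ^n$ into the \emph{same} convex combination of the one-stage conservative, consistent discretizations found above. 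Since a convex combination of flux-difference forms is again a flux-difference form, and consistency is inherited, the low rank scheme's $\boldsymbol\rho^n,\bJ^n$ satisfy a locally conservative and consistent discretization of \eqref{eq:mass}--\eqref{eq:mom}, which is the claim.

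The step I expect to be the main obstacle is the $\bv$-moment of the velocity-derivative (force) term: proving the discrete summation-by-parts identity $\langle D_v(\cdot),\bv\rangle=-\langle(\cdot),{\bf 1}_v\rangle$ for the actual high order upwind operator with the positive/negative $\bE^{\pm}$ splitting, and checking that the boundary fluxes at $v=\pm v_{\max}$ truly vanish for the numerical solution, so that the momentum source emerges \emph{exactly} as $\bE^n\star\boldsymbol\rho^n$ and not merely up to truncation error. The remaining work -- commuting the $v$-moment functionals through the tensor sums, and through the convex multi-step combination -- is routine bookkeeping.
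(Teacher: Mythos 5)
Your proposal is correct and follows essentially the same route as the paper: take the discrete $\mathbf{1}_v$- and $\bv$-moments of the forward Euler low-rank update, use linearity of $D_x,D_v$ plus the summation-by-parts identity $\langle D_v\bU^{(2)}_{l},\bv\rangle=-\langle\bU^{(2)}_{l},\mathbf{1}_v\rangle$ under the vanishing-at-the-$v$-boundary assumption to obtain \eqref{eq:rhofd}--\eqref{eq:Jfd}, and then invoke Proposition~\ref{prop:conservation} so that the truncation leaves $\boldsymbol\rho,\bJ$ unchanged. The paper simply treats forward Euler ``without loss of generality'' where you spell out the convex multi-step combination, and the boundary-flux/SBP issue you flag is handled in the paper by assumption (and by Remark~3.4, which notes exactly that a WENO $D_v$ would break this identity).
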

\begin{proof} 
Without loss of generality, we prove the proposition for the low rank method with the forward Euler time integrator for simplicity.
By taking discrete moments of \eqref{eq:lowrankmethod} and from \eqref{eq:rho_j_kappa}, we derive the discrete evolution equations for the charge density ${\boldsymbol \rho}$ and current density $\bJ$ in $\mathbb{R}^{N_x}$,
\begin{align}\label{eq:rhofd}
	{\boldsymbol\rho}^{n+1,*} &= {\boldsymbol \rho}^{n} - \Delta t D_x(\bJ^{n}), \\
	\bJ^{n+1,*} &= 	\bJ^{n} - \Delta t (D_x({\boldsymbol\sigma}^{n}) - {\boldsymbol\rho}^n \star {\bf E}^n),\label{eq:Jfd}
\end{align}
where we have used the fact that $D_x$ and $D_v$ are linear, and the summation by parts
\begin{equation}\label{eq:sbp}
\langle D_v\bU^{(2)}_{l}, \bv \rangle = -\langle \bU^{(2)}_{l}, \mathbf{1}_v \rangle
\end{equation}
holds by assuming the basis $\bU^{(2)}_{l}$, $l =1,\ldots,r^n,$ vanishes at the boundary. 
Combining \eqref{eq:rhofd}-\eqref{eq:Jfd} with  the property of the conservative truncation from Proposition \ref{prop:conservation}, we have 
\begin{align}\label{eq:rhofd2}
	{\boldsymbol\rho}^{n+1} &={\boldsymbol\rho}^{n+1,*}= 	{\boldsymbol\rho}^{n} - \Delta t D_x(\bJ^{n}), \\
	\bJ^{n+1} &= \bJ^{n+1,*}=	\bJ^{n} - \Delta t (D_x(\sigma^{n}) - {\boldsymbol\rho}^n \star \bE^n),\label{eq:Jfd2}
\end{align}
which are  consistent and conservative discretization of  the macroscopic moment equations \eqref{eq:mass}-\eqref{eq:mom}.
\end{proof}

\begin{rem} If a WENO finite difference discretization is employed for $D_v$, then the local conservation of momentum cannot be guaranteed. This is because the WENO discretization does not satisfy the summation by parts property \eqref{eq:sbp}. A WENO finite difference discretization can be employed for $D_x$ for local conservation.
\end{rem}

\begin{rem}\label{rem:35} If only the mass conservation is desired, then we can modify the subspace  $\mathcal{N}=\text{span}\{{\bf 1}_v\}$ for projection; similarly, if only the mass and momentum conservation is desired, we let $\mathcal{N}=\text{span}\{{\bf 1}_v, \bv\}$. We denote the corresponding projections as $P_1$ and $P_2$, respectively, and denote the projection to 
$\mathcal{N}=\text{span}\{{\bf 1}_v, \bv, \bv^2\}$ as $P_3$. Numerical performances of different projection operators will be assessed in the Section \ref{sec:numerical} for numerical experiments. 
\end{rem}

\begin{rem} The choice of weight function $w(v)$ can affect the performance. In particular, if the weight function does not have sufficient decay, then the solution may not be close enough to zero at the boundary of the velocity domain, and large error could be incurred. On the other hand,  when rescaling $f_2$ by dividing ${w}$, small weights at the boundary can introduce numerical instability. 
\end{rem}

Finally, the proposed conservative truncation algorithm is summarized in the Algorithm 1.

\bigskip
\begin{algorithm}[H]
\label{alg: low_rank}
  \caption{The conservative truncation procedure for the 1D1V VP system.}
  \begin{itemize}
\item Input: the pre-compressed low rank solution at time $t^{n+1}$:
$${\bf f}^{n+1,*} = \sum_{l=1}^{r^*} C_l^*\ \bU_l^{(1),*}\otimes \bU_l^{(2),*}.$$
\item Output: the compressed low rank solution $f^{n+1}$ with the same density, current density, and kinetic energy functions as $f^{n+1,*}$.
\end{itemize}
  \begin{enumerate}
  \item Compute ${\boldsymbol \rho}^{n+1}$,  ${\bf J}^{n+1}$,  ${\boldsymbol \kappa}^{n+1}$ of ${\bf f}^{n+1,*}$ from \eqref{eq:rho_j_kappa}. 
  \item Compute $\mM^{n+1}$ by \eqref{eq: mM} from ${\boldsymbol \rho}^{n+1}$,  ${\bf J}^{n+1}$,  ${\boldsymbol \kappa}^{n+1}$.
  \item Compute ${\bf f}_1$ from \eqref{eq:f1}.
\item Perform the truncation of ${\bf f}_2$ by \eqref{eq: cons_trun_f2}. 
  \item Update the compressed low rank solution by \eqref{eq: Tc}.
\end{enumerate}
  \end{algorithm}

 \section{2D2V Vlasov-Poisson system by the HT format}
 We extend the proposed conservative algorithm to the 2D2V case by the HT format. 
 Below, we briefly review the fundamentals of the HT format for efficiently representing tensors in $d$ dimensions, and the low rank tensor method with the HT format for solving the 2D2V VP system \eqref{vlasov1}. 
\begin{equation}\label{eq:vp4d}
	f_t + v_1f_{x_1} + v_2f_{x_2}+ E_1f_{v_1} + E_2f_{v_2} = 0,
\end{equation}
where the electric field $(E_1, E_2)$ is solved from the coupled Poisson's equation. The macroscopic equations can be obtained from taking moments of \eqref{eq:vp4d} in the form of \eqref{eq:mass}-\eqref{eq:ener}.

\subsection{HT format for high order tensors} 
 We  denote the dimension index $D=\{1,2,\ldots,d\}$  and define a \emph{dimension tree} $\mathcal{T}$ which is a binary tree containing a subset $\alpha\subset D$ at each node. Furthermore, $\mathcal{T}$ has $D$ as the root node and $\{1\},\  \{2\},\ldots,\ \{d\}$ as the leaf nodes. Each non-leaf node $\alpha$ has two children nodes denoted as  $\alpha_{l}$ and $\alpha_{r}$ with $\alpha = \alpha_{l}\bigcup\alpha_r$ and $\alpha_{l}\bigcap\alpha_r = \emptyset$. For example, the dimension tree $\mathcal{T}$ given in Figure \ref{fig:dimtree} can be used to approximate $f(x_1,x_2,v_1,v_2)$ in \eqref{eq:vp4d} in the HT format. The efficiency of the HT format lies in the nestedness property \cite{hackbusch2009new}: for a non-leaf node $\alpha$ with two children nodes $\alpha_l,\,\alpha_r$, then 
 \begin{equation}
\text{range}(\mathcal{M}^{(\alpha)}(\ba))\subset  \text{range}(\mathcal{M}^{(\alpha_l)}(\ba)\otimes\mathcal{M}^{(\alpha_r)}(\ba)),
 \end{equation}
which implies that there exists a third order tensor $\bB^{(\alpha)}\in\mathbb{R}^{r_{\alpha_l}\times r_{\alpha_r}\times r_{\alpha}}$, known as the transfer tensor, such that
 \begin{equation} 
 \label{eq:htd_nested}
 \bU_{l_\alpha}^{(\alpha)} = \sum_{l_{\alpha_l}=1}^{r_{\alpha_l}}\sum_{l_{\alpha_r=1}}^{r_{\alpha_l}} \bB^{(\alpha)}_{l_{\alpha_l},l_{\alpha_r},l_{\alpha}}\bU_{l_{\alpha_l}}^{(\alpha_l)}\otimes \bU_{l_{\alpha_r}}^{(\alpha_r)},\quad l_\alpha =1,\ldots,r_\alpha.
 \end{equation}
 In other words, the frame vectors at the parent node can be recovered by those at the two children nodes $\alpha_l,\,\alpha_r$ with the transfer tensor. 
 By recursively making use of \eqref{eq:htd_nested}, a tensor in the HT format stores a frame at each leaf node and a third order transfer tensor at each non-leaf node based on a dimension tree.  Denote $ \br = \{r_\alpha\}_{\alpha\in\mathcal{T}}$ as the hierarchical ranks. The storage of the HT format scales as $\mathcal{O}(dr^3+drN)$, where $r=\max \br$. If $r$ is reasonably low, then the HT format avoids the curse of dimensionality. In summary, the HT format is fully characterized by the three key components, including a dimension tree, frames at leaf nodes and transfer tensors at non-leaf nodes, see Figure \ref{fig:dimtree} for the data layout. 
 
 \begin{figure}
\centering
\subfigure[]{
 \begin{tikzpicture}[
     level/.style={sibling distance=40mm/#1},
  every node/.style = {shape=rectangle, rounded corners,
    draw, align=center,
    top color=white, bottom color=blue!20}
    ]
  \node {$\{1,\,2,\,3,\,4\}$}
    child { node {$\{1,\,2\}$} 
    	child{ node{$\{1\}$}}
	child{ node{$\{2\}$}}
    }
    child { node {$\{3,\,4\}$}
    	child{ node{$\{3\}$}}
	child{ node{$\{4\}$}} 
	};
\end{tikzpicture}}\quad
\subfigure[]{
\begin{tikzpicture}[
  every node/.style = {shape=rectangle, rounded corners,
    draw, 
    top color=white, bottom color=blue!20},
    level/.style={sibling distance=40mm/#1}
    ]
  \node {$\bB^{(1,2,3,4)}$}
      child { node {$\bB^{(1,2)}$} 
    	child{ node{$\bU^{(1)}$}}
	child{ node{$\bU^{(2)}$}}
    }
    child { node {$\bB^{(3,4)}$}
    	child{ node{$\bU^{(3)}$}}
	child{ node{$\bU^{(4)}$}} 
	};
\end{tikzpicture}}
\vspace{0.2in}
\label{fig:dimtree}
 \caption{Dimension tree $\mathcal{T}$ to express fourth-order tensors in the HT format. }
\end{figure}


\subsection{A conservative low rank tensor method in HT for the 2D2V VP system}
We follow the low rank tensor method for updating the 2D2V VP solution in  \cite{guo2021lowrank}, but propose to perform a conservative hierarchical  HOSVD truncation with preservation of charge, current and kinetic energy density. We assume at each time step, the solution ${\bf f}$ is expressed as the fourth order tensor in the HT format with dimension tree $\mathcal{T}$ together with frames $\bU^{(1)},\bU^{(2)},\bU^{(3)},\bU^{(4)}$ at four leaf nodes, corresponding to directions $x_1,\,x_2,\,v_1,\,v_2$, respectively, and transfer tensors $\bB^{(1,2,3,4)},\bB^{(1,2)},\bB^{(3,4)}$, see Figure \ref{fig:dimtree}. In particular, 
\beq
\label{eq:htd_f_nested_0}
{\bf f} = \sum_{l_{12}=1}^{r_{12}}{\sum_{l_{34}=1}^{r_{34}}} \bB^{(1,2, 3, 4)}_{l_{12},l_{34}}\bU_{l_{12}}^{(1, 2)}\otimes \bU_{l_{34}}^{(3, 4)},
\eeq
with 
\beq
\label{eq: U12_htd}
\bU_{l_{12}}^{(1, 2)} = \sum_{l_{1}=1}^{r_{1}}{\sum_{l_{2}=1}^{r_{2}}} \bB^{(1,2)}_{l_{1},l_{2}, l_{12}}\bU_{l_{1}}^{(1)}\otimes \bU_{l_{2}}^{(2)},\quad l_{12} =1,\ldots,r_{12}, 
\eeq
and 
\beq
\label{eq: U34_htd}
\bU_{l_{34}}^{(3, 4)} = \sum_{l_{3}=1}^{r_{3}}{\sum_{l_{4}=1}^{r_{4}}} \bB^{(3, 4)}_{l_{3},l_{4}, l_{34}}\bU_{l_{3}}^{(3)}\otimes \bU_{l_{4}}^{(4)},\quad l_{34} =1,\ldots,r_{34}. 
\eeq
 Further, the electric field $E_1$ and $E_2$ are represented in the second order HT format. 
 
 Now we are ready to formulate a conservative HT truncation of VP solutions in a low rank tensor format. We denote pre-compressed solution ${\bf f}$ in the HT format as in \eqref{eq:htd_f_nested_0}. The idea of the conservative truncation is similar to the 1D1V case. We first introduce a weighted inner product space for ${\bf f}, {\bf g} \in \mathbb{R}^{N_{v_1} \times N_{v_2}}$ 
\beq
\label{eq: 2d2v_inner_w}
\langle {\bf f}, {\bf g}\rangle_{\bf w} = h_{v_1}h_{v_2} \sum_{j_1=1}^{N_{v_1}} \sum_{j_2 = 1}^{N_{v_2}} f_{j_1, j_2} g_{j_1, j_2}  w_{j_1, j_2}, 
\eeq
where $h_{v_1}$ and $h_{v_2}$ are the mesh size for the  $v_1$ and $v_2$ grids, respectively. Here
\beq
\label{eq: w_12}
{\bf w}\doteq {\bf w}^{(1)}\otimes {\bf w}^{(2)} \in \mathbb{R}^{N_{v_1} \times N_{v_2}}, \quad {\bf w}^{(1)}\in \mathbb{R}^{N_{v_1}}, {\bf w}^{(2)}\in \mathbb{R}^{N_{v_2}}, 
\eeq 
where ${\bf w}^{(1)}$, ${\bf w}^{(2)}$ are vectors as point values of the weight function $w(v)= \exp(-\frac{{v}^2}{2})$ at corresponding $v_1$ and $v_2$ grids. 
Then we seek a decomposition ${\bf f} = {\bf f}_{1} + {\bf f}_{2}$, where ${\bf f}_{1}$ comes from the rescaling and orthogonal projection of ${\bf f}$ with respect to the weighted inner product \eqref{eq: 2d2v_inner_w} onto the subspace 
 \beq
 \label{eq: 2d2v_subspa}
 \mathcal{N}\doteq \text{span}\{{\bf 1}_{v_1 \otimes v_2}, {\bf v}_1\otimes {\bf 1}_{v_2},  {\bf 1}_{v_1}\otimes{\bf v}_2, {\bf v}_1^2\otimes {\bf 1}_{v_2}+{\bf 1}_{v_1}\otimes{\bf v}_2^2\}
 \eeq
for the conservation of mass, momentum and kinetic energy density, see Proposition~\ref{prop:2d2v} below for details. For further discussion, we also introduce a standard inner product for ${\bf f}, {\bf g} \in \mathbb{R}^{N_{v_1} \times N_{v_2}}$ 
\beq
\label{eq: 2d2v_inner}
\langle {\bf f}, {\bf g}\rangle = h_{v_1}h_{v_2} \sum_{j_1=1}^{N_{v_1}} \sum_{j_2 = 1}^{N_{v_2}} f_{j_1, j_2} g_{j_1, j_2}. 
\eeq
  
\begin{prop}\label{prop:2d2v} 
Let ${\bf f}_1$ come from the scaling and orthogonal projection of rescaled ${\bf f}$ with respect to the weighted inner product \eqref{eq: 2d2v_inner_w} onto the subspace $\eqref{eq: 2d2v_subspa}$, in a similar spirit to the 1D1V case \eqref{eq: f_decom_d}.  Assume ${\bf f}$ is written in the low rank HT format \eqref{eq:htd_f_nested}. ${\bf f}_1$ can be represented in  low rank HT format (consistently with the subscript $1$ in the notations),
\begin{eqnarray}
\label{eq:htd_f1_nested}
{\bf f}_1 = \tilde{P} ({\bf f}) & \doteq& \sum_{l_{12}=1}^{4}{\sum_{l_{34}=1}^{4}} (\bB_1^{(1,2, 3, 4)})_{l_{12},l_{34}}(\bU_1^{(1, 2)})_{l_{12}}\otimes (\bU_1^{(3, 4)})_{l_{34}},\\
\label{eq:htd_f1_nested_b}
& \stackrel{=}{\eqref{eq: B1} }&  \sum_{k=1}^{4} {(\bU_1^{(1, 2)})_k}\otimes {(\bU_1^{(3, 4)})_k},
\end{eqnarray}
where we introduce the notation of $\tilde{P}$ as the scaled orthogonal projection. The specifications of the frame vectors and transfer tensors are outlined below.
\bit
\item For ${\bf f}_1$, the hierarchical ranks are $r_{12} = r_{34} = 4$, $r_3=r_4=3$. $r_1$ and  $r_2$ are the same as those for ${\bf f}$. 
\item
The transfer tensor, $\bB_1^{(1, 2, 3, 4)}$, of size $4\times 4$, is an identity matrix, 
\beq
\label{eq: B1}
\bB_1^{(1, 2, 3, 4)} = {\bf I}_{4\times 4}.
\eeq
$(\bU_1^{(3, 4)})_k$, $k =1,\ldots,4,$ is constructed from an orthonormal set of basis $\{\bV_1, \cdots \bV_4\}$ in the $v_1-v_2$ dimensions defined in \eqref{eq: V}. We have $r_3=r_4=3$ and 
\beq
\label{eq: U34_htd_f1}
(\bU_1^{(3, 4)})_{k} = \sum_{l_{3}=1}^{3}{\sum_{l_{4}=1}^{3}} (\bB_1^{(3, 4)})_{l_{3},l_{4}, k} ({\bf w}^{(1)}\star(\bU_1^{(3)})_{l_{3}})\otimes ({\bf w}^{(2)}\star(\bU_1^{(4)})_{l_{4}})
\eeq
with ${\bf w}^{(1)}$ and ${\bf w}^{(2)}$ the same as in \eqref{eq: w_12}. 
The frame vectors  for node $3$ are 
\begin{eqnarray}
(\bU_1^{(3)})_1 = \frac{1}{c_1} {\bf 1}_{v_1} , \quad
(\bU_1^{(3)})_2 = \frac{1}{c_2}{\bf v}_1 , \quad
(\bU_1^{(3)})_3 = \frac{1}{c_3}({\bf v}_1^2-c {\bf 1}_{v_1}),
\label{eq: U3U4}
\end{eqnarray}
where $c= \frac{\innerw{\mathbf{1}_{v_1},\bv_1^2}}{\innerw{\mathbf{1}_{v_1},\mathbf{1}_{v_1}}}$ is the orthogonalization constant of the basis, and $c_l$, $l=1, 2, 3$ are normalization constants for the corresponding basis of ${\bf 1}_{v_1}$, ${\bf v}_1$ and ${\bf v}_1^2-c  {\bf 1}_{v_1}$. 
We have the same frame vectors for the node $4$ but for $v_2$, assuming that the weight function and domain in $v_2$ is the same as $v_1$, 
\begin{eqnarray}
(\bU_1^{(4)})_1 = \frac{1}{c_1} {\bf 1}_{v_2} , \quad
(\bU_1^{(4)})_2 = \frac{1}{c_2}{\bf v}_2 , \quad
(\bU_1^{(4)})_3 = \frac{1}{c_3}({\bf v}_2^2-c {\bf 1}_{v_2})
\label{eq: U3U4_b}
\end{eqnarray} 
 The transfer tensor $\bB_1^{(3, 4)}$ is a tensor of size $3\times 3 \times4$. It has zero elements, except the following specification for $(\bB_1^{(3, 4)})_{l_{3},l_{4}, l_{34}}$
\begin{equation}
\label{eq:B34f}
(\bB_1^{(3,4)})_{1,1,1} =(\bB_1^{(3,4)})_{2,1,2}=(\bB_1^{(3,4)})_{1,2,3}= 1, \quad
(\bB_1^{(3,4)})_{3,1,4} = (\bB_1^{(3,4)})_{1,3,4} = \frac1{\sqrt{2}}.
\end{equation} 
\item 
$(\bU_1^{(1, 2)})_{k}$, $k=1, \cdots 4$, is in the following form, 
\beq
\label{eq: U12_htd_b}
(\bU_1^{(1, 2)})_{k} = \sum_{l_{1}=1}^{r_{1}}{\sum_{l_{2}=1}^{r_{2}}} (\bB_1^{(1,2)})_{l_{1},l_{2}, k}(\bU_1^{(1)})_{l_{1}}\otimes (\bU_1^{(2)})_{l_{2}},
\eeq
with the same frame vectors as ${\bf f}$ on the leaf nodes $1$ and $2$, meaning that $\bU_1^{(1)}$ and $\bU_1^{(2)}$ are the same as $\bU^{(1)}$ and $\bU^{(2)}$, respectively.  $\bB_1^{(1, 2)}$, of size $r_1 \times r_2 \times 4$, has its elements as, 
\beq
\label{eq: B1_12}
(\bB_1^{(1, 2)})_{l_1, l_2, k}=\sum_{l_{12}=1}^{r_{12}} \sum_{l_{34}=1}^{r_{34}}\bB^{(1,2)}_{l_1,l_2,l_{12}}\bS_{k,l_{34}}\bB^{(1,2,3,4)}_{l_{12},l_{34}}, \quad k = 1, \cdots 4, 
\eeq
with $r_{12}$, $r_{34}$ and transfer tensors $\bB^{(1,2)}$ and $\bB^{(1,2,3,4)}$ from the HT representation of ${\bf f}$, and 
\beq
\bS_{k,l_{34}} = \langle \bU_{l_{34}}^{(3, 4)} , \bV_k\rangle, \quad k=1, \cdots 4, 
\eeq 
where $\bU_{l_{34}}^{(3, 4)}$, $l_{34}=1,\ldots,r_{34}$, is the frame tensor for node (3, 4) of ${\bf f}$. The inner product in the sense of \eqref{eq: 2d2v_inner} can be evaluated in a dimension-by-dimension manner. 
\eit
Finally ${\bf f}_1$ has the same discrete macroscopic charge, current and kinetic energy density as with $\bf f$, which are denoted as $\boldsymbol\rho$, ${\bf J}_{1}$, ${\bf J}_{2}$, and $\boldsymbol\kappa$.  
The discrete macroscopic charge, current and kinetic energy density of ${\bf f}_1$ are 
\beq
\label{eq: rho1}
{\boldsymbol\rho} = (c_1)^2 (\bU_1^{(1, 2)})_1  = (c_1)^2 \sum_{l_1, l_2} (\bB_1^{(1, 2)})_{l_1, l_2, 1} \bU^{(1)}_{l_1}\otimes \bU^{(2)}_{l_2}, 
\eeq
\beq
\label{eq: J1_1}
{\bJ_1} = c_1 c_2 (\bU_1^{(1, 2)})_2  = c_1 c_2 \sum_{l_1, l_2} (\bB_1^{(1, 2)})_{l_1, l_2, 2} \bU^{(1)}_{l_1}\otimes \bU^{(2)}_{l_2},
\eeq
\beq
\label{eq: J1_2}
{\bJ_2} = c_1 c_2 (\bU_1^{(1, 2)})_3  = c_1 c_2 \sum_{l_1, l_2} (\bB_1^{(1, 2)})_{l_1, l_2, 3} \bU^{(1)}_{l_1}\otimes \bU^{(2)}_{l_2},
\eeq
\beq
\label{eq: kappa1}
{\boldsymbol\kappa} = \frac{1}{\sqrt{2}}c_1 c_3 (\bU_1^{(1, 2)})_4 +  c \boldsymbol\rho   =\frac{1}{\sqrt{2}} c_1 c_3 \sum_{l_1, l_2} (\bB_1^{(1, 2)})_{l_1, l_2, 4} \bU^{(1)}_{l_1}\otimes \bU^{(2)}_{l_2}+  c \boldsymbol\rho.
\eeq
They are the same as those of ${\bf f}$, which can be obtained by the HT tensor contraction 
\begin{align}
\left(\begin{array}{l}
{\boldsymbol\rho}\\
{\bf J}_1\\
{\bf J}_2\\
{\boldsymbol \kappa} 
\end{array}
\right )
&= \sum_{l_{12}} \sum_{l_{34}} \bB^{(1,2, 3, 4)}_{l_{12},l_{34}}
 \left 
 \langle \bU_{l_{34}}^{(3, 4)}, 
 \left(\begin{array}{l}
{\bf 1}_{v_1 \otimes v_2} \\
{\bf v}_1\otimes {\bf 1}_{v_2}\\
{\bf 1}_{v_1}\otimes{\bf v}_2\\
\frac12{\bf v}_1^2\otimes {\bf 1}_{v_2}+\frac12{\bf 1}_{v_1}\otimes{\bf v}_2^2
\end{array}
\right )
\right \rangle
\bU_{l_{12}}^{(1, 2)}, \nonumber\\
&= \sum_{l_{12}} \sum_{l_{34}} \bB^{(1,2, 3, 4)}_{l_{12},l_{34}}
 \left(\begin{array}{l}
S_{1, l_{34}} (c_1)^{2}\\
S_{2, l_{34}} c_1 c_2\\
S_{3, l_{34}} c_1 c_2\\
S_{4, l_{34}} c_1 c_3 \frac{1}{\sqrt{2}} + S_{1, l_{34}} (c_1)^2 c 
\end{array}
\right )
\bU_{l_{12}}^{(1, 2)}.
\label{eq:rho_j_kappa_2d2v}
\end{align}
\end{prop}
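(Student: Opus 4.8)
The plan is to reduce the 2D2V statement to the one--dimensional Proposition~\ref{prop: f_decomp_d} by exploiting the tensor--product structure of both the weight, $\bw=\bw^{(1)}\otimes\bw^{(2)}$, and the subspace $\mathcal{N}$, and then to carry out the HT bookkeeping that re-expresses the globally defined projection $\tilde P$ as purely local modifications of the velocity leaf frames and of the transfer tensors $\bB^{(3,4)},\,\bB^{(1,2,3,4)},\,\bB^{(1,2)}$, while leaving the spatial leaves $\bU^{(1)},\bU^{(2)}$ untouched. First I would build the $\langle\cdot,\cdot\rangle_\bw$--orthonormal basis $\{\bV_1,\dots,\bV_4\}$ of $\mathcal{N}$ used in \eqref{eq: V}. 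Since $\{{\bf 1}_{v_1},\bv_1,\bv_1^2-c{\bf 1}_{v_1}\}$ is orthogonal in the $\bw^{(1)}$--weighted inner product with $c=\innerw{{\bf 1}_{v_1},\bv_1^2}/\innerw{{\bf 1}_{v_1},{\bf 1}_{v_1}}$ (this is exactly the 1D1V orthogonalization of Proposition~\ref{prop: f_decomp_d}), and likewise in $v_2$, the identity $\langle \ba_1\otimes\ba_2,\bbb_1\otimes\bbb_2\rangle_\bw = \langle\ba_1,\bbb_1\rangle_{\bw^{(1)}}\langle\ba_2,\bbb_2\rangle_{\bw^{(2)}}$ shows the five generators ${\bf 1}_{v_1}\otimes{\bf 1}_{v_2}$, $\bv_1\otimes{\bf 1}_{v_2}$, ${\bf 1}_{v_1}\otimes\bv_2$, $(\bv_1^2-c{\bf 1}_{v_1})\otimes{\bf 1}_{v_2}$, ${\bf 1}_{v_1}\otimes(\bv_2^2-c{\bf 1}_{v_2})$ are mutually $\bw$--orthogonal. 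Writing $\bv_1^2\otimes{\bf 1}_{v_2}+{\bf 1}_{v_1}\otimes\bv_2^2=(\bv_1^2-c{\bf 1}_{v_1})\otimes{\bf 1}_{v_2}+{\bf 1}_{v_1}\otimes(\bv_2^2-c{\bf 1}_{v_2})+2c\,{\bf 1}_{v_1}\otimes{\bf 1}_{v_2}$ and applying the Pythagorean identity, the fourth basis vector has squared $\bw$--norm $2c_1^2c_3^2$, which is the source of the $1/\sqrt2$ factors in $\bB_1^{(3,4)}$; normalizing by the $\bw^{(1)}$--norms $c_1,c_2,c_3$ of ${\bf 1}_{v_1},\bv_1,\bv_1^2-c{\bf 1}_{v_1}$ recovers precisely the leaf frames \eqref{eq: U3U4}--\eqref{eq: U3U4_b} and the transfer tensor \eqref{eq:B34f}. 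Since $\sum_{l_3,l_4}(\bB_1^{(3,4)})_{l_3,l_4,k}(\bU_1^{(3)})_{l_3}\otimes(\bU_1^{(4)})_{l_4}=\bV_k$ and $(\bw^{(1)}\star\ba)\otimes(\bw^{(2)}\star\bbb)=\bw\star(\ba\otimes\bbb)$, \eqref{eq: U34_htd_f1} then gives $(\bU_1^{(3,4)})_k=\bw\star\bV_k$.

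Next I would identify the scaled projection. For each fixed $(x_1,x_2)$ index, the $\langle\cdot,\cdot\rangle_\bw$--orthogonal projection onto $\mathcal{N}$ of $\tilde{\bf f}=\frac1\bw\star{\bf f}$ equals $\sum_{k=1}^4\langle\tilde{\bf f},\bV_k\rangle_\bw\,\bV_k$, and $\langle\tilde{\bf f},\bV_k\rangle_\bw=\langle{\bf f},\bV_k\rangle$ in the standard inner product \eqref{eq: 2d2v_inner} over $(v_1,v_2)$; hence ${\bf f}_1=\bw\star P_{\mathcal N}(\tilde{\bf f})=\sum_{k=1}^4\langle{\bf f},\bV_k\rangle\otimes(\bw\star\bV_k)$, a tensor whose first factor $\langle{\bf f},\bV_k\rangle$ lives in $(x_1,x_2)$. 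This is the rank--four HT object \eqref{eq:htd_f1_nested}--\eqref{eq:htd_f1_nested_b} with diagonal root transfer tensor \eqref{eq: B1}, and $(\bU_1^{(3,4)})_k=\bw\star\bV_k$ as above; it remains only to show $(\bU_1^{(1,2)})_k=\langle{\bf f},\bV_k\rangle$ has the claimed HT form \eqref{eq: U12_htd_b}--\eqref{eq: B1_12}. For this I would contract ${\bf f}$ in the HT format \eqref{eq:htd_f_nested_0} against $\bV_k$ over the velocity leaves, obtaining $\langle{\bf f},\bV_k\rangle=\sum_{l_{12},l_{34}}\bB^{(1,2,3,4)}_{l_{12},l_{34}}\,\bS_{k,l_{34}}\,\bU^{(1,2)}_{l_{12}}$ with $\bS_{k,l_{34}}=\langle\bU^{(3,4)}_{l_{34}},\bV_k\rangle$ evaluated dimension by dimension using \eqref{eq: U34_htd} and the fact that each $\bV_k$ is rank one or rank two in $(v_1,v_2)$; substituting \eqref{eq: U12_htd} and collecting terms yields exactly \eqref{eq: B1_12} with unchanged leaf frames $\bU_1^{(1)}=\bU^{(1)},\ \bU_1^{(2)}=\bU^{(2)}$. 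The rank counts $r_{12}=r_{34}=4$, $r_3=r_4=3$, $r_1,r_2$ unchanged follow by inspection of these formulas.

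Finally I would verify conservation. From ${\bf f}_2={\bf f}-{\bf f}_1=\bw\star(I-P_{\mathcal N})(\tilde{\bf f})$, for every $\bg\in\mathcal{N}$ one has $\langle{\bf f}_2,\bg\rangle=\langle(I-P_{\mathcal N})(\tilde{\bf f}),\bg\rangle_\bw=0$ pointwise in $(x_1,x_2)$. Since the four moment test vectors ${\bf 1}_{v_1\otimes v_2}$, $\bv_1\otimes{\bf 1}_{v_2}$, ${\bf 1}_{v_1}\otimes\bv_2$, $\tfrac12(\bv_1^2\otimes{\bf 1}_{v_2}+{\bf 1}_{v_1}\otimes\bv_2^2)$ all lie in $\mathcal{N}$, the discrete $\boldsymbol\rho,\bJ_1,\bJ_2,\boldsymbol\kappa$ of ${\bf f}_1$ and ${\bf f}$ coincide. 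Expressing those test vectors in the $\{\bV_k\}$ basis --- the first three are $c_1^2\bV_1,\ c_1c_2\bV_2,\ c_1c_2\bV_3$, and the energy test vector is $\tfrac{c_1c_3}{\sqrt2}\bV_4+c\,c_1^2\bV_1$ --- gives $\boldsymbol\rho=c_1^2(\bU_1^{(1,2)})_1$, $\bJ_i=c_1c_2(\bU_1^{(1,2)})_{i+1}$, and $\boldsymbol\kappa=\tfrac1{\sqrt2}c_1c_3(\bU_1^{(1,2)})_4+c\boldsymbol\rho$, i.e. \eqref{eq: rho1}--\eqref{eq: kappa1}; feeding the same substitutions into $\langle{\bf f},\bV_k\rangle=\sum_{l_{12},l_{34}}\bB^{(1,2,3,4)}_{l_{12},l_{34}}\bS_{k,l_{34}}\bU^{(1,2)}_{l_{12}}$ and using $\bS_{1,l_{34}}=\langle\bU^{(3,4)}_{l_{34}},\bV_1\rangle$ etc.\ produces \eqref{eq:rho_j_kappa_2d2v}.

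I expect the main obstacle to be not any single computation but the HT bookkeeping in the second step: one must check that a projection defined globally on the velocity block can be realized purely by (i) replacing the velocity leaf frames and the transfer tensors $\bB^{(3,4)},\bB^{(1,2,3,4)}$ and (ii) a rank--preserving linear recombination of $\bB^{(1,2)}$, and that the inner products $\bS_{k,l_{34}}$ needed for this are computable in a separated, dimension--by--dimension way precisely because each $\bV_k$ is itself a low--rank tensor in $(v_1,v_2)$. Everything else --- orthogonality and the $1/\sqrt2$ normalization of the $\bV_k$, and the conservation identities --- is a direct consequence of the tensor--product weight together with Proposition~\ref{prop: f_decomp_d}.
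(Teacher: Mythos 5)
Your proposal is correct and follows essentially the same route as the paper's proof: scale by $\bw$, orthonormalize the velocity subspace into $\{\bV_k\}$ (yielding \eqref{eq: U3U4}--\eqref{eq:B34f}), realize the projection through $\bS_{k,l_{34}}=\langle\bU^{(3,4)}_{l_{34}},\bV_k\rangle$ with the weights canceling, reorganize the HT sums into $\bB_1^{(1,2)}$ as in \eqref{eq: B1_12}, and read off the moments from the expansion of the test vectors in the $\bV_k$ basis. The only cosmetic difference is that you write ${\bf f}_1=\sum_k\langle{\bf f},\bV_k\rangle\otimes(\bw\star\bV_k)$ directly, while the paper applies the projection ${\bf P}^{(3,4)}$ frame-by-frame to $\tilde\bU^{(3,4)}_{l_{34}}$ before collecting terms; the computations coincide.
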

\begin{proof} 
We first scale ${\bf f}$ by ${\bf w}$ in \eqref{eq: w_12}
\beq
\label{eq:htd_f_nested}
\tilde{\bf f} = \sum_{l_{12}=1}^{r_{12}}{\sum_{l_{34}=1}^{r_{34}}} \bB^{(1,2, 3, 4)}_{l_{12},l_{34}}\bU_{l_{12}}^{(1, 2)}\otimes \frac{1}{\bf w} \star \bU_{l_{34}}^{(3, 4)}.
\eeq 
Let $ \tilde{\bU}_{l_{34}}^{(3, 4)} \doteq \frac{1}{\bf w} \star \bU_{l_{34}}^{(3, 4)}$,  where $\star$ is the element-wise multiplication in the corresponding dimensions. 
Then we perform the orthogonal projection of $\tilde{\bf f}$ onto the subspace of \eqref{eq: 2d2v_subspa} to obtain $\tilde{\bf f}_1$. Finally we rescale back to ${\bf f}_1 = {\bf w} \star \tilde{\bf f}_1$.

To perform the orthogonal projection, the orthogonalization of basis in \eqref{eq: 2d2v_subspa} gives the frame vectors for the node 3 and 4 as specified in \eqref{eq: U3U4} and the transfer tensor $\bB_1^{(3, 4)}$ as specified in \eqref{eq:B34f}. This gives a set of  orthonormal basis of $(\bU_1^{(3, 4)})_l$, $l=1, 2, 3, 4$. We denote them as $\bV_l$, $l=1, 2, 3, 4$, 
\begin{align}
{\bf V}_1 &=(\bU_1^{(3)})_1 \otimes (\bU_1^{(4)})_1, \quad {\bf V}_2 = (\bU_1^{(3)})_2 \otimes(\bU_1^{(4)})_1\nonumber\\
{\bf V}_3 &=(\bU_1^{(3)})_1 \otimes (\bU_1^{(4)})_2, \quad {\bf V}_4 = \frac1{\sqrt{2}}(\bU_1^{(3)})_3 \otimes(\bU_1^{(4)})_1 +  \frac1{\sqrt{2}} (\bU_1^{(3)})_1 \otimes(\bU_1^{(4)})_3.
\label{eq: V}
\end{align}
To perform the orthogonal projection to \eqref{eq: 2d2v_subspa}, only the bases of node $(3,4)$ are affected. In particular, let ${\bf P}^{(3,4)}$ be the projection operator at the node $(3, 4)$, then 
\begin{align}
{\bf P}^{(3,4)} \tilde{\bf f} &={\bf P}^{(3,4)}  \sum_{l_{12}=1}^{r_{12}}{\sum_{l_{34}=1}^{r_{34}}} \bB^{(1,2, 3, 4)}_{l_{12},l_{34}}\bU_{l_{12}}^{(1, 2)}\otimes \frac{1}{\bf w} \star \bU_{l_{34}}^{(3, 4)} \nonumber\\
& = \sum_{l_{12}=1}^{r_{12}}{\sum_{l_{34}=1}^{r_{34}}} \bB^{(1,2, 3, 4)}_{l_{12},l_{34}}\bU_{l_{12}}^{(1, 2)}\otimes 
\left({\bf P}^{(3,4)} (\frac{1}{\bf w} \star \bU_{l_{34}}^{(3, 4)})\right), 
\label{eq: P34}
\end{align}
where 
\beq
\label{eq: SS}
{\bf P}^{(3,4)}(\frac{1}{\bf w} \star \bU_{l_{34}}^{(3, 4)})=
{\bf P}^{(3,4)}\tilde{\bU}^{(3,4)}_{l_{34}} = \sum_{k=1}^4  {\langle \tilde{\bU}^{(3,4)}_{l_{34}}, \bV_k \rangle_{\bw}}\bV_k \doteq 
 \sum_{k=1}^4  \bS_{k,l_{34}} \bV_k,
\eeq
with
\beq
\label{eq: S}
\bS_{k,l_{34}}  = {\langle \tilde{\bU}^{(3,4)}_{l_{34}}, \bV_k \rangle_{\bw}} =  \langle \frac{1}{\bf w} \star \bU_{l_{34}}^{(3, 4)}, \bV_k \rangle_{\bw} = \langle \bU_{l_{34}}^{(3, 4)} , \bV_k\rangle.
\eeq
Plug \eqref{eq: SS} into \eqref{eq: P34}, 
\begin{align}
{\bf P}^{(3,4)} \tilde{\bf f} 
& = \sum_{l_{12}=1}^{r_{12}}{\sum_{l_{34}=1}^{r_{34}}} \bB^{(1,2, 3, 4)}_{l_{12},l_{34}}\bU_{l_{12}}^{(1, 2)}\otimes 
(\sum_{k=1}^4  \bS_{k,l_{34}} \bV_k), \nonumber\\
&= \sum_{l_{12}=1}^{r_{12}} {\sum_{l_{34}=1}^{r_{34}}} \bB^{(1,2, 3, 4)}_{l_{12},l_{34}} \left(\sum_{l_1=1}^{r_{1}} \sum_{l_2=1}^{r_{2}} (\bU_{l_1}^{(1)}\otimes\bU_{l_2}^{(2)})\bB^{(1,2)}_{l_1,l_2,l_{12}}\right) \otimes 
(\sum_{k=1}^4  \bS_{k,l_{34}} \bV_k),  \nonumber\\
& =\sum_{k=1}^4  \left(\sum_{l_1=1}^{r_{1}} \sum_{l_2=1}^{r_{2}} \underbrace{\left(\sum_{l_{12}=1}^{r_{12}}{\sum_{l_{34}=1}^{r_{34}}} \bB^{(1,2, 3, 4)}_{l_{12},l_{34}} \bB^{(1,2)}_{l_1,l_2,l_{12}}  \bS_{k,l_{34}} \right)}_{\doteq (\bB_1^{(1,2)})_{l_1,l_2,k}}(\bU_{l_1}^{(1)}\otimes\bU_{l_2}^{(2)})\right) \otimes \bV_k,\\
&=\sum_{k=1}^4  \left(\sum_{l_1=1}^{r_{1}} \sum_{l_2=1}^{r_{2}}  (\bB_1^{(1,2)})_{l_1,l_2,k} (\bU_{l_1}^{(1)}\otimes\bU_{l_2}^{(2)})\right) \otimes \bV_k, 
\label{eq: P34_2}
\end{align}
where we let $(\bB_1^{(1,2)})_{l_1,l_2,k}$ as specified in \eqref{eq: B1_12}, and 
\beq
\label{eq: U12k}
(\bU_1^{(1, 2)})_k \doteq
\left(\sum_{l_1=1}^{r_{1}} \sum_{l_2=1}^{r_{2}}  (\bB_1^{(1,2)})_{l_1,l_2,k} (\bU_{l_1}^{(1)}\otimes\bU_{l_2}^{(2)})\right)
\eeq
as in \eqref{eq: U12_htd_b}.

The macroscopic charge, momentum and energy density of ${\bf f}_1$, \eqref{eq: rho1}-\eqref{eq: kappa1} can be derived, from the form of ${\bf f}_1$ in \eqref{eq:htd_f1_nested_b}, the form of $\bU_1^{(1, 2)}$ in \eqref{eq: U12k}, and 
\[
{\bf 1}_{v_1 \otimes v_2} = (c_1)^2 {\bf V}_1, \quad {\bf v}_1\otimes {\bf 1}_{v_2} = c_1 c_2 {\bf V}_2, \quad
{\bf 1}_{v_1}\otimes  {\bf v}_2 = c_1 c_2 {\bf V}_3, 
\]
\[
\frac12{\bf v}_1^2\otimes {\bf 1}_{v_2}+\frac12{\bf 1}_{v_1}\otimes{\bf v}_2^2 =
\frac{1}{\sqrt{2}}c_1 c_3 {\bf V}_4 + c (c_1)^2 {\bf V}_1, 
\]
which is due to \eqref{eq: V}.
The agreement of macroscopic charge, momentum and energy density of ${\bf f}$ and ${\bf f}_1$ is a direct consequence of manipulation of equalities in \eqref{eq: U12_htd_b}, \eqref{eq: B1_12}, \eqref{eq:rho_j_kappa_2d2v} and orthonormal property of the basis in $\bU_1^{(3, 4)}$. 

 \begin{figure}
\centering
\begin{tikzpicture}[
  every node/.style = {shape=rectangle, rounded corners,
    draw, 
    top color=white, bottom color=blue!20},
    level/.style={sibling distance=55mm/#1}
    ]
  \node {$\bB_1^{(1,2,3,4)}$ in \eqref{eq: B1}}   
  child { node {$\bB_1^{(1,2)}$ in \eqref{eq: B1_12}}
    	child{ node{$\bU^{(1)}$}}
	child{ node{$\bU^{(2)}$}}
          }
    child { node {$\bB_1^{(3,4)}$ in \eqref{eq:B34f}}
    	child{{node{$\bU_1^{(3)}$ in \eqref{eq: U3U4}}}}
	child{{node{$\bU_1^{(4)}$ in \eqref{eq: U3U4_b}}}} 
	};
\end{tikzpicture}
 \caption{The data layout of ${\bf f}_1$. Here $\bU^{(1)}$ and $\bU^{(2)}$ are the same as those vector frames for ${\bf f}$. \label{fig:f1}}
\end{figure}

\end{proof}

Once the orthogonal projection is performed,  ${\bf f}_{1}$ stays untouched for the truncation. As with the 1D1V case, we will perform HOSVD to truncate the remainder ${\bf f}_{2} \doteq {\bf f} - {\bf f}_{1}$. If a standard HT truncation is directly applied to ${\bf f}_{2}$, unlike the 1D1V case the conservation property cannot be guaranteed. Below, we elaborate and investigate such an issue, and further develop a conservative projection procedure after the HT truncation to ensure charge, current and kinetic energy density conservation.

We start with a brief description of a naive weighted hierarchical HOSVD truncation procedure, as a direct analog of the 1D1V case. 
First, we scale ${\bf f}_2$ according to the weights and define $\tilde{\bf f}_2=\frac{1}{\sqrt{\bf w}} \star {\bf f}_2$. Note that the rescaling is computed dimension-by-dimension. The standard hierarchical HOSVD  root-to-leaf truncation with threshold $\varepsilon$ is applied to $\tilde{\bf f}_2$ and $\mT_\varepsilon(\tilde{\bf f}_2)$ is obtained. Finally, ${\bf f}_2$ is defined as 
\beq
\label{eq: f2_2d2v}
{\bf f}_2={\sqrt{\bf w}} \star \mT_\varepsilon(\tilde{\bf f}_2). 
\eeq
The issue with such procedure is the loss of macroscopic conservation in the root-to-leaf truncation. 
In particular, the HT truncation $\mT_\varepsilon(\tilde{\bf f}_2)$ can be represented as, see \cite{grasedyck2010hierarchical},
\begin{equation}
\mT_\varepsilon(\tilde{\bf f}_2) = (\pi_1\otimes \pi_2 \otimes \pi_3 \otimes \pi_4)(\pi_{12}\otimes\pi_{34})\, {\bf f}_2,
\end{equation}
where $\pi_\alpha$ denotes the orthogonal projection on the subspace spanned by $\tilde{r}_{\alpha}$ leading left singular  vectors $\{\tilde{\bU}^{(\alpha)}_{i}\}_{i=1}^{\tilde{r}_\alpha}$ of the matricization $\mM^{(\alpha)}({\bf f}_2)$. However, the bases at node $(3,4)$  are no longer orthogonal to $\{\bV_1,\,\bV_2,\,\bV_3,\,\bV_4\}$ in \eqref{eq: V} due to the truncation $\pi_3 $ and $ \pi_4$ at leaf nodes. Hence ${\bf f}_2$ as in \eqref{eq: f2_2d2v} may have nonzero charge, current and kinetic energy density, breaking the conservation property. To fix the issue, we propose to apply the operator {$({\bf I} - \tilde{P})$}, to ${\sqrt{\bf w}} \star \mT_\varepsilon(\tilde{\bf f}_2)$ 
to ensure zero charge, current and kinetic energy density of truncated ${\bf f}_2$. Here $\tilde{P}$ is the same as that specified in \eqref{eq:htd_f1_nested} in Proposition~\ref{prop:2d2v}. We introduce the following notation,  
\beq
\widetilde{\mT_\varepsilon}({\bf f}_2) \doteq ({\bf I} - \tilde{P}) \left({\sqrt{\bf w}} \star \mT_\varepsilon(\tilde{\bf f}_2)\right).
\label{eq: widetilde_T}
\eeq
Finally, the conservative HT truncation of ${\bf f}$ is done as follows
\beq
T_c({\bf f})\doteq
{\bf f}_1 + \widetilde{\mT_\varepsilon}({\bf f}_2). 
\label{eq: Tc2d2v}
\eeq
The following proposition is a straightforward consequence from the orthogonal projection.
\begin{prop} 
$\widetilde{\mT_\varepsilon}({\bf f}_2)$
 has zero charge density, zero current density, and zero kinetic energy density.  Hence, $T_c({\bf f})$ preserves the charge, current, and kinetic energy densities ($\boldsymbol \rho$, ${\bf J}_1$, ${\bf J}_2$, $\boldsymbol\kappa$) of the original ${\bf f}$.
\end{prop}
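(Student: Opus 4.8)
The plan is to reduce the claim to the single assertion that $\widetilde{\mT_\varepsilon}({\bf f}_2)$ is orthogonal, in the weighted inner product \eqref{eq: 2d2v_inner_w}, to the subspace $\mathcal{N}$ of \eqref{eq: 2d2v_subspa}; once that is established, the conservation of $\boldsymbol\rho$, ${\bf J}_1$, ${\bf J}_2$, $\boldsymbol\kappa$ for $T_c({\bf f})$ follows from Proposition~\ref{prop:2d2v} in exactly the way Proposition~\ref{prop:conservation} follows from Proposition~\ref{prop: f_decomp_d} in the 1D1V case: ${\bf f}_1$ already carries all four macroscopic moments, so it suffices that the truncated remainder contributes nothing. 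First I would write $\widetilde{\mT_\varepsilon}({\bf f}_2) = ({\bf I}-\tilde P)\,{\bf g}$ with ${\bf g}\doteq {\sqrt{\bf w}} \star \mT_\varepsilon(\tilde{\bf f}_2)$, and recall from Proposition~\ref{prop:2d2v} that $\tilde P$ is, up to the scaling by ${\bf w}$, precisely the orthogonal projector onto $\mathcal{N}$ with respect to $\langle\cdot,\cdot\rangle_{\bf w}$.

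Next I would make the scaling bookkeeping explicit. By construction $\tilde P({\bf g}) = {\bf w}\star P_{\mathcal N}\!\left(\frac{1}{\bf w}\star {\bf g}\right)$, where $P_{\mathcal N}$ is the $\langle\cdot,\cdot\rangle_{\bf w}$-orthogonal projection onto $\mathcal{N}$ spanned by the orthonormal set $\{{\bf V}_1,\ldots,{\bf V}_4\}$ of \eqref{eq: V}; this is the multidimensional analogue of \eqref{eq: f_decom_d}. Then for any ${\bf V}_k$,
\begin{align}
\langle ({\bf I}-\tilde P){\bf g}, {\bf V}_k\rangle_{\bf w}
&= \langle {\bf g}, {\bf V}_k\rangle_{\bf w} - \langle {\bf w}\star P_{\mathcal N}(\tfrac1{\bf w}\star {\bf g}), {\bf V}_k\rangle_{\bf w}\nonumber\\
&= \langle \tfrac1{\bf w}\star{\bf g}, {\bf V}_k\rangle_{\bf w} - \langle P_{\mathcal N}(\tfrac1{\bf w}\star{\bf g}), {\bf V}_k\rangle_{\bf w}\nonumber\\
&= \langle (\tfrac1{\bf w}\star{\bf g}) - P_{\mathcal N}(\tfrac1{\bf w}\star{\bf g}),\, {\bf V}_k\rangle_{\bf w} = 0,
\end{align}
since $P_{\mathcal N}$ is the orthogonal projection onto $\mathrm{span}\{{\bf V}_1,\ldots,{\bf V}_4\}$ in $\langle\cdot,\cdot\rangle_{\bf w}$. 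In the second line I used that $\langle {\bf a}, {\bf b}\rangle = \langle \tfrac1{\bf w}\star {\bf a},{\bf b}\rangle_{\bf w}$ and that multiplying the first slot by ${\bf w}$ and dividing by ${\bf w}$ cancels. It remains to translate $\langle\cdot,{\bf V}_k\rangle_{\bf w}=0$, $k=1,\ldots,4$, into vanishing of the four macroscopic densities: because $\mathrm{span}\{{\bf V}_1,\ldots,{\bf V}_4\}$ equals $\mathcal{N}$ of \eqref{eq: 2d2v_subspa} (their spans agree, as \eqref{eq: V} together with \eqref{eq: U3U4}–\eqref{eq: U3U4_b} makes each ${\bf V}_k$ a nonzero scalar multiple of the corresponding generator of $\mathcal{N}$, modulo lower-order terms already in the span), the moments $\langle\,\cdot\,,{\bf 1}_{v_1\otimes v_2}\rangle_{\bf w}$, $\langle\,\cdot\,,{\bf v}_1\otimes{\bf 1}_{v_2}\rangle_{\bf w}$, $\langle\,\cdot\,,{\bf 1}_{v_1}\otimes{\bf v}_2\rangle_{\bf w}$ and $\langle\,\cdot\,,\tfrac12{\bf v}_1^2\otimes{\bf 1}_{v_2}+\tfrac12{\bf 1}_{v_1}\otimes{\bf v}_2^2\rangle_{\bf w}$ of $\widetilde{\mT_\varepsilon}({\bf f}_2)$ all vanish, and these weighted moments are exactly $\boldsymbol\rho$, ${\bf J}_1$, ${\bf J}_2$, $\boldsymbol\kappa$ of ${\sqrt{\bf w}}\star\widetilde{\mT_\varepsilon}({\bf f}_2)$ — equivalently, using the scaling identity $\langle {\bf h},{\bf g}\rangle = \langle \frac{1}{{\bf w}}\star{\bf h},{\bf g}\rangle_{\bf w}$ once more, the unweighted moments of $\widetilde{\mT_\varepsilon}({\bf f}_2)$ against $1$, ${\bf v}_1$, ${\bf v}_2$, $\tfrac12({\bf v}_1^2+{\bf v}_2^2)$, which are the defining integrals of the discrete charge, current and kinetic energy densities. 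Finally, combining with Proposition~\ref{prop:2d2v}, $T_c({\bf f}) = {\bf f}_1 + \widetilde{\mT_\varepsilon}({\bf f}_2)$ has the same four macroscopic densities as ${\bf f}_1$, hence as ${\bf f}$.

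The main obstacle is not the orthogonality computation — which is a one-line consequence of $\tilde P$ being a weighted orthogonal projector — but the careful handling of the two different scalings (by ${\bf w}$ in the projection step versus by ${\sqrt{\bf w}}$ in the HOSVD truncation step) so that $({\bf I}-\tilde P)$ is applied to the already-rescaled ${\sqrt{\bf w}}\star\mT_\varepsilon(\tilde{\bf f}_2)$ and still annihilates the $\mathcal{N}$-components in the \emph{unscaled} moment sense; one has to verify that $\tilde P$ acts correctly on objects living in the $\sqrt{\bf w}$-scaled space, and that the dimension-by-dimension contractions producing $\boldsymbol\rho$, ${\bf J}_1$, ${\bf J}_2$, $\boldsymbol\kappa$ in \eqref{eq:rho_j_kappa_2d2v} are genuinely the $\langle\cdot,\cdot\rangle_{\bf w}$-inner products against the ${\bf V}_k$'s after scaling. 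A secondary point to check is that applying $({\bf I}-\tilde P)$ does not disturb the HT structure in a way that reintroduces moments — but since $\tilde P$ outputs a rank-$\le 4$ HT tensor in the fixed frame $\{{\bf V}_k\}$ at node $(3,4)$ and the subtraction is performed within the enlarged (hence still HT-representable) frame, this is purely a representation-size bookkeeping issue, not an obstruction to the conservation claim.
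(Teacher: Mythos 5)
Your overall route coincides with the paper's: reduce everything to the fact that $({\bf I}-\tilde{P})$ annihilates the moments against $\mathcal{N}$, then invoke Proposition~\ref{prop:2d2v} to transfer the densities of ${\bf f}_1$ (hence of ${\bf f}$) to $T_c({\bf f})$. However, the scaling bookkeeping — which you yourself identify as the only real content of this proposition — is carried out incorrectly in your displayed computation. With $\langle\cdot,\cdot\rangle_{\bf w}$ on the left-hand side, the passage from your first to your second line asserts $\langle {\bf g},\bV_k\rangle_{\bf w}=\langle\tfrac{1}{\bf w}\star{\bf g},\bV_k\rangle_{\bf w}$ and $\langle {\bf w}\star P_{\mathcal N}(\tfrac{1}{\bf w}\star{\bf g}),\bV_k\rangle_{\bf w}=\langle P_{\mathcal N}(\tfrac{1}{\bf w}\star{\bf g}),\bV_k\rangle_{\bf w}$, both false in general; the identity you invoke, $\langle{\bf a},{\bf b}\rangle=\langle\tfrac{1}{\bf w}\star{\bf a},{\bf b}\rangle_{\bf w}$, converts the \emph{unweighted} inner product, not the weighted one. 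The closing translation is also wrong as stated: $\langle \widetilde{\mT_\varepsilon}({\bf f}_2),\bV_k\rangle_{\bf w}$ is the unweighted moment of ${\bf w}\star\widetilde{\mT_\varepsilon}({\bf f}_2)$ (not of $\sqrt{\bf w}\star\widetilde{\mT_\varepsilon}({\bf f}_2)$), and its vanishing does not imply vanishing of the unweighted moments of $\widetilde{\mT_\varepsilon}({\bf f}_2)$ itself — yet those unweighted moments, taken slice-by-slice in $\bx$, are exactly the charge, current and kinetic energy densities the proposition concerns. So, read literally, both the key display and the step that converts it into the conservation statement fail.

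The repair is one line and is precisely what the paper's brief proof means by ``direct consequence of the ${\bf I}-\tilde{P}$ projection operator'': show the \emph{unweighted} moments vanish directly. For any ${\bf h}$ and any ${\bf g}_0\in\mathcal{N}$,
\begin{equation*}
\langle \tilde{P}({\bf h}),{\bf g}_0\rangle
=\langle {\bf w}\star P_{\mathcal N}(\tfrac{1}{\bf w}\star{\bf h}),{\bf g}_0\rangle
=\langle P_{\mathcal N}(\tfrac{1}{\bf w}\star{\bf h}),{\bf g}_0\rangle_{\bf w}
=\langle \tfrac{1}{\bf w}\star{\bf h},{\bf g}_0\rangle_{\bf w}
=\langle {\bf h},{\bf g}_0\rangle,
\end{equation*}
where the third equality uses that $P_{\mathcal N}$ is the $\langle\cdot,\cdot\rangle_{\bf w}$-orthogonal projection onto $\mathcal{N}\ni{\bf g}_0$; hence $\langle({\bf I}-\tilde{P}){\bf h},{\bf g}_0\rangle=0$ for every ${\bf h}$. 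Applying this with ${\bf h}=\sqrt{\bf w}\star\mT_\varepsilon(\tilde{\bf f}_2)$, and noting that $\tilde{P}$ acts only at the $(3,4)$ node so the identity holds for each fixed spatial index, gives zero charge, current and kinetic energy density for $\widetilde{\mT_\varepsilon}({\bf f}_2)$; Proposition~\ref{prop:2d2v} then yields the conservation claim for $T_c({\bf f})$, exactly as in your final paragraph. With that correction your argument is the paper's argument, just written out in detail.
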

\begin{proof}
The zero charge, current and kinetic energy density is a direct consequence of the ${\bf I} - \tilde{P}$ projection operator. From this fact, together with from Proposition~\ref{prop:2d2v}, $T_c({\bf f})$ preserves the charge, current, and kinetic energy densities ($\boldsymbol \rho$, ${\bf J}_1$, ${\bf J}_2$, $\boldsymbol\kappa$) of the original ${\bf f}$. 
\end{proof}

\begin{prop} (Local mass and momentum conservation for the 2D2V VP system.) If the discrete differential operators $D_x,\,D_v$ employed are conservative, i.e., can be written in a flux difference form, and linear, i.e., can preserve linear relations, then the proposed low rank method with an SSP multi-step time integrator preserves the mass and momentum locally; that is the schemes for $\boldsymbol \rho$, ${\bf J}_1$, ${\bf J}_2$, from integrating the scheme on ${\bf f}^{n}$, are  consistent and conservative discretization of the macroscopic moment equations \eqref{eq:mass}-\eqref{eq:mom}. 
\end{prop}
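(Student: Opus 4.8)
The plan is to follow the proof of Proposition~\ref{prop: cons} almost verbatim, upgrading each one‑dimensional manipulation to its tensor‑leg counterpart in the HT format. Without loss of generality I would prove the statement for the forward Euler time integrator; the extension to an SSP multi‑step scheme is then immediate since every stage of such a scheme is a convex combination of forward‑Euler‑type updates and the velocity‑moment map is linear. First I would record the ``add basis'' update: because the 2D2V transport operator $v_1\partial_{x_1}+v_2\partial_{x_2}+E_1\partial_{v_1}+E_2\partial_{v_2}$ is tensor friendly, one forward‑Euler step of \eqref{eq:vp4d} produces ${\bf f}^{n+1,*}$ as a sum of HT terms in which $D_{x_1},D_{x_2}$ act only on the $x$‑frames (with element‑wise multiplication by $v_1^{\pm},v_2^{\pm}$ on the corresponding $v$‑frames) and $D_{v_1},D_{v_2}$ act only on the $v$‑frames (with element‑wise multiplication by ${\bf E}_1^{\pm},{\bf E}_2^{\pm}$ on the corresponding $x$‑frames), exactly as in the 1D1V formula \eqref{eq:lowrankmethod}.

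Next I would take discrete velocity moments of ${\bf f}^{n+1,*}$ against ${\bf 1}_{v_1\otimes v_2}$, ${\bf v}_1\otimes{\bf 1}_{v_2}$ and ${\bf 1}_{v_1}\otimes{\bf v}_2$, i.e. contract the $v_1$ and $v_2$ legs as in \eqref{eq:rho_j_kappa_2d2v}. Since $D_{x_1},D_{x_2},D_{v_1},D_{v_2}$ are linear they commute with this contraction, so the $x$‑transport terms pass through untouched and deliver flux‑difference terms, namely $D_{x_1}({\bf J}_1^{n})+D_{x_2}({\bf J}_2^{n})$ in the mass moment and the discrete divergence of the momentum‑flux tensor $\boldsymbol\sigma^{n}$ (the $\bv\otimes\bv$ velocity moment of ${\bf f}^{n}$) in the momentum moments. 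For the $v$‑transport terms I would apply the one‑dimensional summation‑by‑parts identity \eqref{eq:sbp} leg by leg — $\langle D_{v_1}{\bf U},{\bf v}_1\rangle=-\langle{\bf U},{\bf 1}_{v_1}\rangle$ in the $v_1$ leg and analogously in the $v_2$ leg, valid because the $v$‑frames vanish at the velocity boundary — which converts the ${\bf v}_1$‑ and ${\bf v}_2$‑moments of $\nabla_{\bf v}\cdot({\bf E}f)$ into the source terms $\boldsymbol\rho^{n}\star{\bf E}_1^{n}$ and $\boldsymbol\rho^{n}\star{\bf E}_2^{n}$, while the ${\bf 1}$‑moment of the $v$‑transport term vanishes. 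The outcome is a discrete system for $\boldsymbol\rho^{n+1,*},{\bf J}_1^{n+1,*},{\bf J}_2^{n+1,*}$ of exactly the form \eqref{eq:rhofd}--\eqref{eq:Jfd}, which is consistent with and in flux‑difference form for the macroscopic equations \eqref{eq:mass}--\eqref{eq:mom}.

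Finally I would invoke the conservative HT truncation: by \eqref{eq: Tc2d2v} and the preceding proposition, $\widetilde{\mT_\varepsilon}({\bf f}_2)$ has zero discrete charge and current density, so $\boldsymbol\rho^{n+1}=\boldsymbol\rho^{n+1,*}$, ${\bf J}_1^{n+1}={\bf J}_1^{n+1,*}$ and ${\bf J}_2^{n+1}={\bf J}_2^{n+1,*}$. Hence the truncation leaves the fluid updates unchanged, and the scheme for $(\boldsymbol\rho,{\bf J}_1,{\bf J}_2)$ obtained by taking moments of the low rank scheme on ${\bf f}^{n}$ is a consistent, locally conservative discretization of \eqref{eq:mass}--\eqref{eq:mom}, which is the claim.

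The step I expect to demand the most care is the bookkeeping that the velocity‑moment contraction genuinely commutes with the HT structure: one must verify, leg by leg, that applying $\langle\cdot,{\bf 1}_{v_1}\rangle$, $\langle\cdot,{\bf v}_1\rangle$ and their $v_2$ analogues to the frames of nodes $3$ and $4$ and then reassembling through the transfer tensors $\bB^{(3,4)}$ and $\bB^{(1,2,3,4)}$ reproduces precisely the $x$‑side quantities in \eqref{eq:rho_j_kappa_2d2v}, and that the upwind flux splittings $v_1^{\pm},v_2^{\pm},{\bf E}_1^{\pm},{\bf E}_2^{\pm}$ recombine correctly under these moments — the one place where the linearity hypothesis on $D_{v_1},D_{v_2}$ is indispensable, and where, as remarked earlier, a WENO discretization of $D_{v}$ would violate \eqref{eq:sbp} and destroy momentum conservation. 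Conceptually, however, nothing is needed beyond Proposition~\ref{prop: cons}, Proposition~\ref{prop:2d2v}, and the zero‑moment property of $\widetilde{\mT_\varepsilon}({\bf f}_2)$.
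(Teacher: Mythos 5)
Your proposal is correct and follows the same route the paper intends: the paper's own proof simply states that the argument mirrors Proposition~\ref{prop: cons} together with the conservative truncation, and your write-up fills in exactly those omitted details (moment contraction leg by leg, linearity of $D_{x},D_{v}$, summation by parts \eqref{eq:sbp} in each velocity direction, and the zero-moment property of $\widetilde{\mT_\varepsilon}({\bf f}_2)$). No gaps; your version is just a more explicit rendering of the paper's argument.
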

\begin{proof}
The proof is based on a conservative and linear discretization of the discrete differential operators $D_x,\,D_v$ employed and from the conservative truncation procedure as proposed above. The details are similar to that of Proposition \ref{prop: cons} and hence omitted for brevity.
\end{proof}

\begin{rem} To ensure the local conservation, ${\bf f}_1$ should not be further compressed. In the numerical simulation, we still truncate  ${\bf f}_1$ at nodes (1, 2) with threshold $10^{-15}$ to remove the redundancy from the add basis procedure. Thus the local conservation property is preserved on the same scale of machine precision, i.e. $10^{-15}$. 
\end{rem}

We summarized the conservative truncation procedure as Algorithm \ref{alg: low_rank2d} below for the 2D2V VP solution. 

\bigskip
\begin{algorithm}[H]
\label{alg: low_rank2d}
  \caption{The conservative truncation procedure for the 2D2V VP solution.}
  \begin{itemize}
\item Input: the pre-compressed low rank solution ${\bf f}$ in the HT format with dimension tree given in Figure \ref{fig:dimtree} (a) and the associated data including the frame tensors $\bU^{(1)}$, $\bU^{(2)}$, $\bU^{(3)}$, and $\bU^{(4)}$ at nodes $(1)$, $(2)$, $(3)$, and $(4)$, respectively, and transfer tensors $\bB^{(1,2)}$, $\bB^{(3,4)}$, and $\bB^{(1,2,3,4)}$ at nodes $(1,2)$, $(3,4)$, and $(1,2,3,4)$, respectively. 
\item Output: the compressed low rank solution $T_c({\bf f})$ in the HT format with the same charge, current, and kinetic energy density functions as ${\bf f}$.
\end{itemize}

  \begin{enumerate}
  \item Compute the rescaled orthogonal projection to obtain ${\bf f}_1 = \tilde{P}({\bf f})$, in the HT format with data layout in Figure \ref{fig:f1}. The transfer tensor $\bB_1^{(1, 2, 3, 4)}$ is the identity matrix of size $4\times 4$, $(\bB_1^{(1, 2)})_{l_{12}}$ is a matrix of size $r_1 \times r_2 \times 4$ from \eqref{eq: B1_12}, and $\bB_1^{(3,4)}$ is a matrix of size $3 \times 3 \times 4$ from \eqref{eq:B34f}, with the frame tensors $\bU_1^{(1)} = \bU^{(1)}$, $\bU_1^{(2)}=\bU^{(2)}$, $\bU_1^{(3)}$ and $\bU_1^{(3)}$ from \eqref{eq: U3U4} and \eqref{eq: U3U4_b} respectively. At the end of this step, we truncate node (1, 2) of ${\bf f}_1$ with threshold $10^{-15}$ to remove redundant basis in $\bU_1^{(1, 2)}$. 
    \item Perform the HOSVD truncation, together with an orthogonal projection operator, to ${\bf f}_2 \doteq {\bf f}-{\bf f}_1$ to ensure zero charge, current and energy densities: %
      \begin{enumerate}
      \item 
    Compute ${\bf f}_2 ={\bf f} - {\bf f}_1$, and scale it to obtain $\tilde{\bf f}_2 = \frac{1}{\sqrt{\bf w}}\star{\bf f}_2$.
    \item 
    Apply the standard HOSVD truncation to $\tilde{\bf f}_2$, and apply rescaling to obtain ${\sqrt{\bf w}}\star\mT_\varepsilon(\tilde{\bf f}_2)$.
    \item 
    Apply ${\bf I} - \tilde{P}$ to ${\sqrt{\bf w}}\star\mT_\varepsilon(\tilde{\bf f}_2)$ to obtain $\widetilde{\mT_\varepsilon}({\bf f}_2)$, i.e. \eqref{eq: widetilde_T}, with the same $\tilde{P}$ operator as in the previous step. 
    \end{enumerate}    
  \item Update the compressed low rank solution $T_c({\bf f})={\bf f}_1 + \widetilde{\mT_\varepsilon}({\bf f}_2)$ from \eqref{eq: Tc2d2v}.
\end{enumerate}
  \end{algorithm}

%

\section{Numerical results}
\label{sec:numerical}
In this section we present a collection of numerical examples to demonstrate the efficacy of the proposed conservative low rank tensor method for simulating the VP system. In the simulations, fifth order upwind finite difference methods are employed for spatial discretization, together with a second order SSP multi-step method denoted by SSPML2 for temporal discretization. The numerical solutions of high dimensions are represented in the HT format \cite{hackbusch2012tensor}. We also compare the proposed conservative low rank methods against  the nonconservative version in terms of efficiency and ability to conserve the physical invariants.  Unless otherwise noted, we let the weight function $w(\bv)=\exp(-\frac{|\bv|^2}{2})$.

\subsection{1D1V Vlasov-Poisson system}

\begin{exa}\label{ex:weak1d}(Weak Laudau damping.) 
We consider the weak Landau damping test with initial condition 
\begin{equation}
\label{eq:landau1d}
f(x,v,t=0) =\frac{1}{\sqrt{2 \pi}} \left(1+\alpha  \cos \left(k x\right)\right)\exp\left(-\frac{v^2}{2}\right),
\end{equation}
where $\alpha=0.01$ and $k=0.5$. \end{exa} 
The computational domain is set to be $[0,L_x]\times[-L_v,L_v]$ with $L_x=2\pi/k$ and $L_v=6$. We set $\varepsilon=10^{-5}$ for truncation. In Figure \ref{fig:weak1d_elec}, we report the time histories of the electric energy and numerical ranks of the solutions computed by the conservative and non-conservative methods for comparision. It is observed that both methods are able to predict the correct damping rate of the electric energy, and meanwhile, the numerical ranks of the conservative method are bigger than those of the non-conservative counterpart. In Figure \ref{fig:weak1d_invar}, the time histories of the relative deviation of total mass, momentum and energy are plotted. The conservative method is found to be able to conserve the total mass and momentum up to machine precision regardless of the mesh size used. As mentioned above, the proposed conservative method cannot conserve the total energy, as the time integrator employed is not energy conserving. With the mesh refinement, the conservarion error of total energy decreases.
Meanwhile, the non-conservative method can conserve the total mass and energy up to the magnitude of truncation threshold $\varepsilon$, but the total momentum is conserved to the machine precision, which is attributed to the symmetry of the solution in the velocity direction.  
 Noteworthy, though both methods cannot conserve the total energy, the conservative one does a better job in energy conservation compared to the non-conservative counterpart, and it is because that the kinetic energy is preserved in the truncation.

\begin{figure}[h!]
	\centering
	\subfigure[]{\includegraphics[height=50mm]{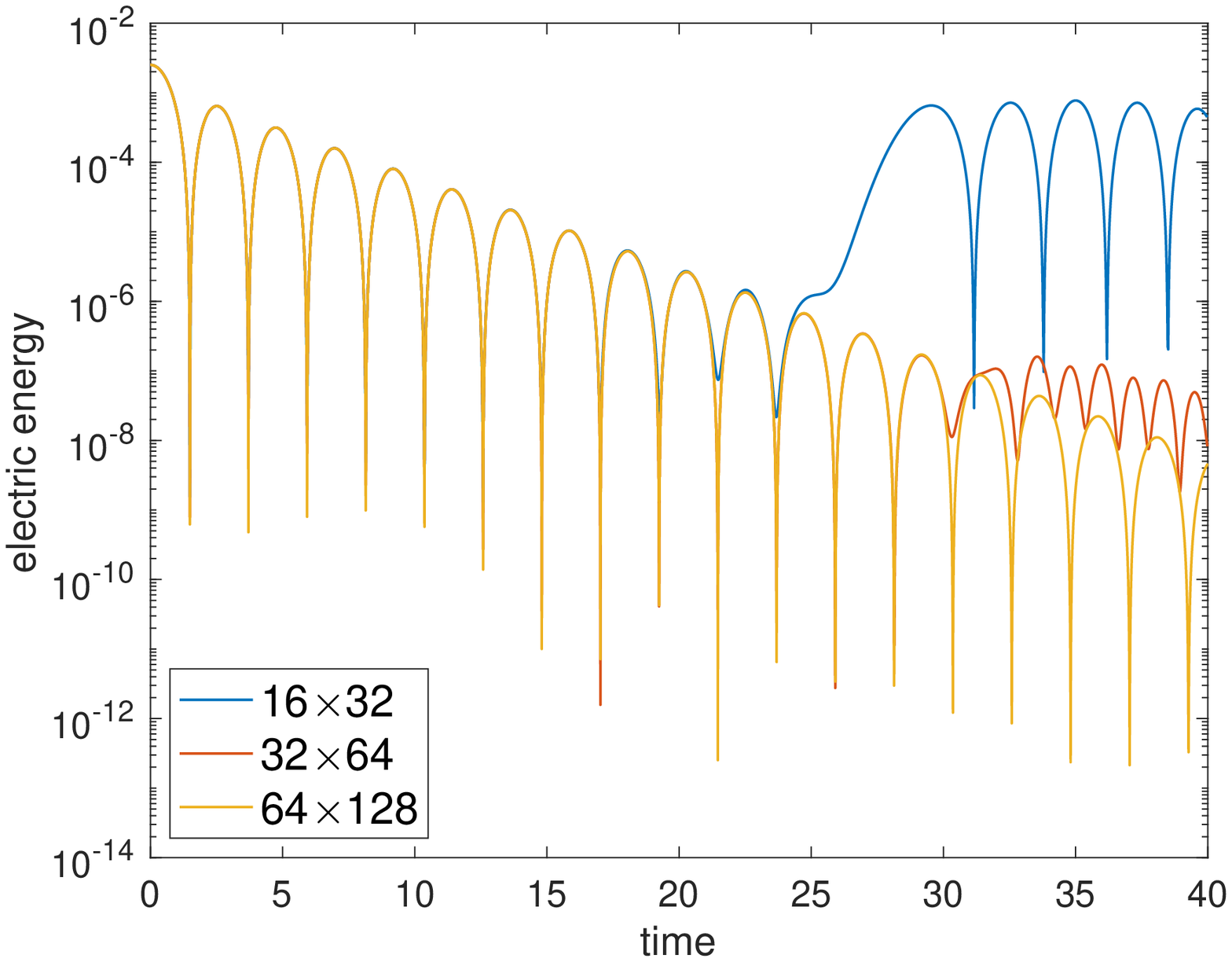}}
		\subfigure[]{\includegraphics[height=50mm]{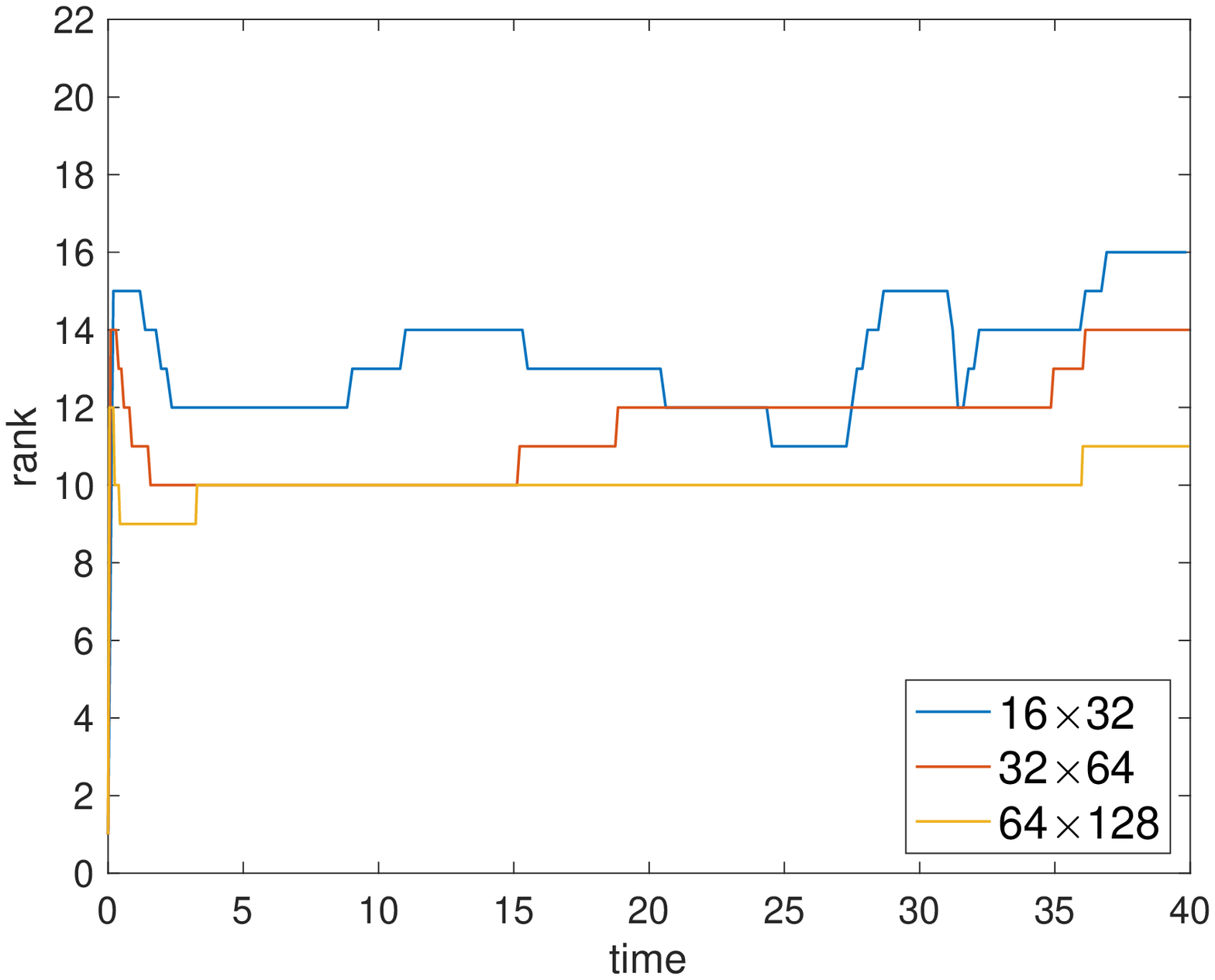}}
			\subfigure[]{\includegraphics[height=50mm]{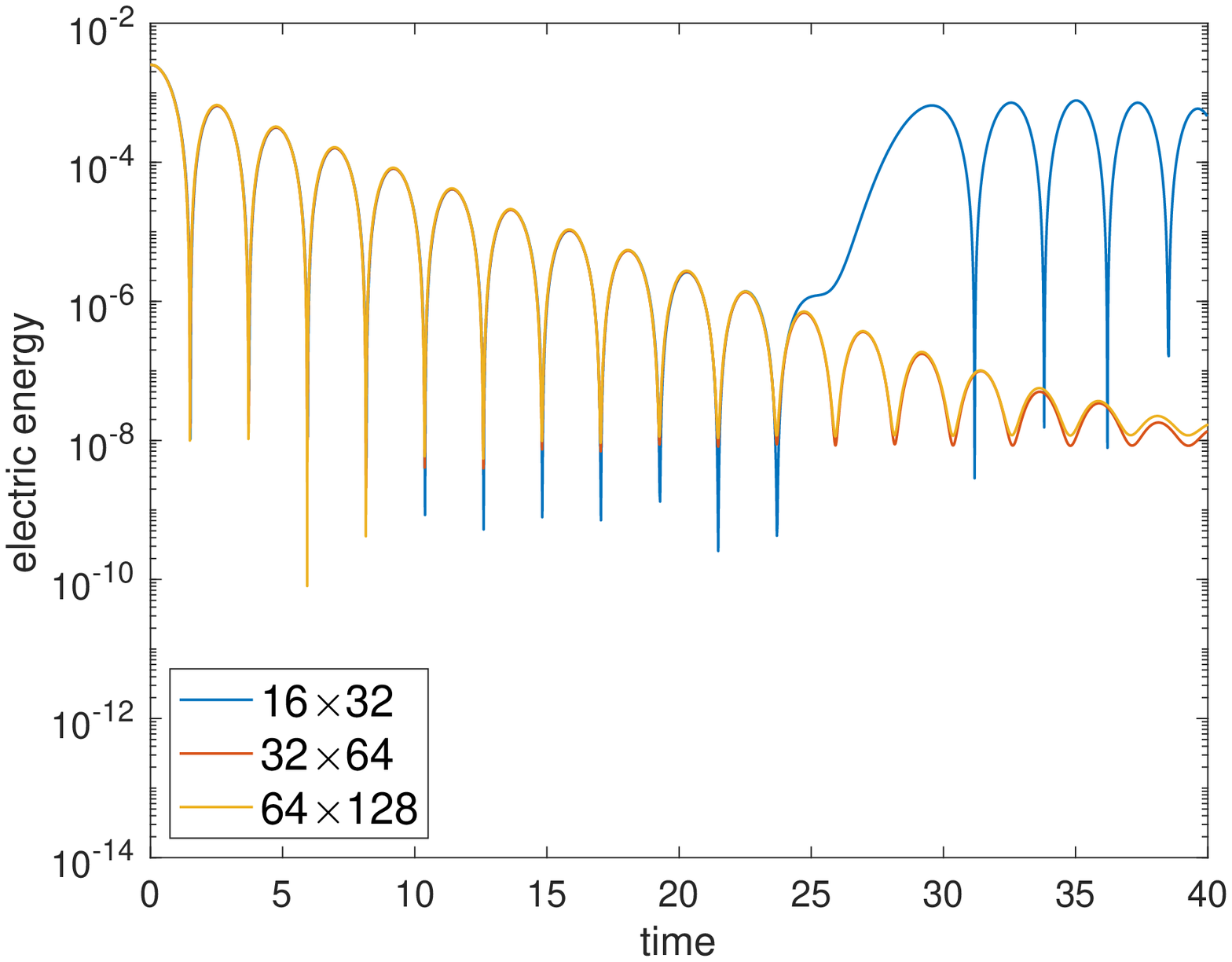}}
		\subfigure[]{\includegraphics[height=50mm]{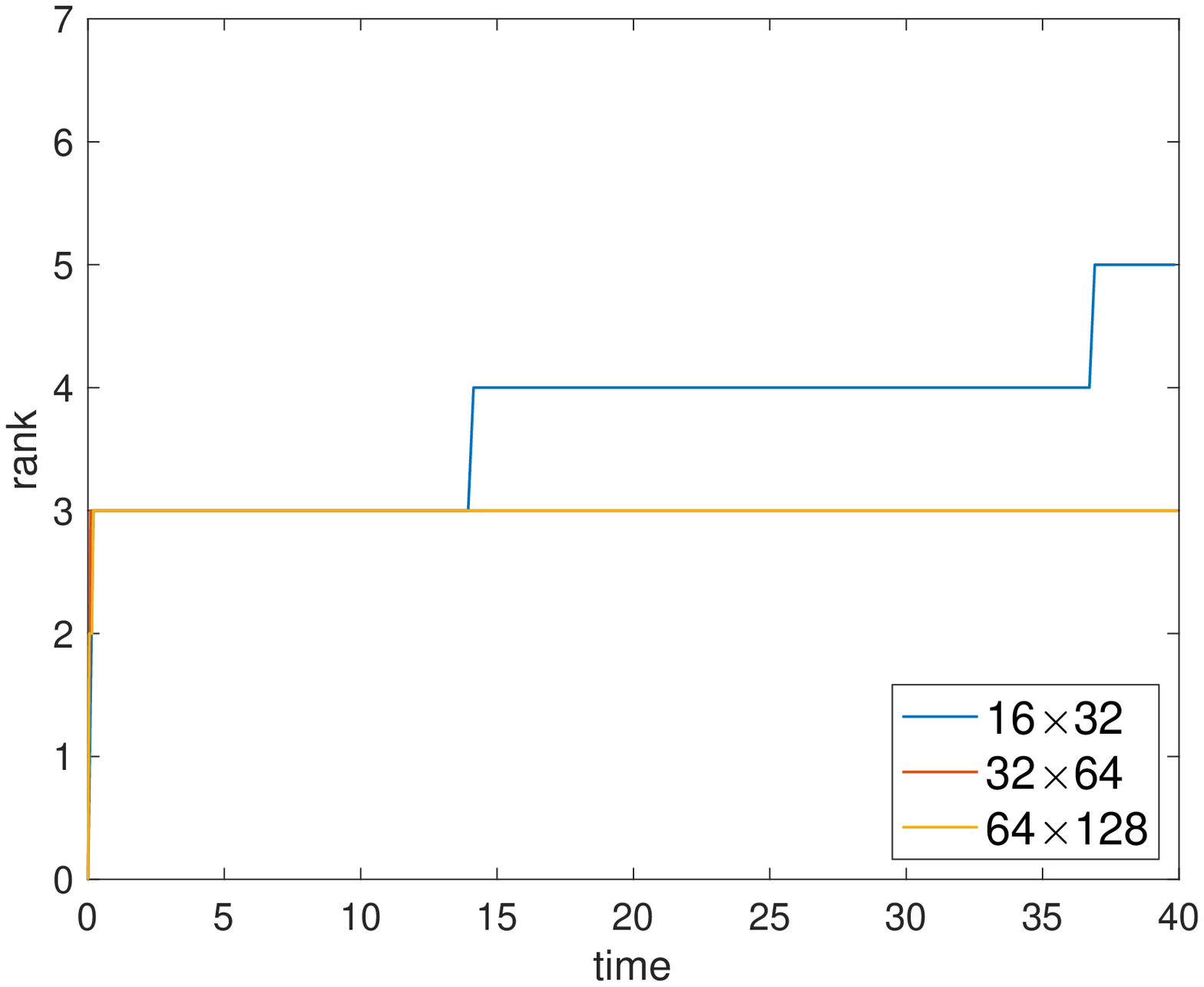}}
	\caption{Example \ref{ex:weak1d}.  The time evolution of the electric energy (a, c) and the rank of the numerical solutions (b, d). Conservative method (a, b) and non-conservative method (c, d). $\varepsilon=10^{-5}$.}
	\label{fig:weak1d_elec}
\end{figure}

\begin{figure}[h!]
	\centering
	\subfigure[]{\includegraphics[height=40mm]{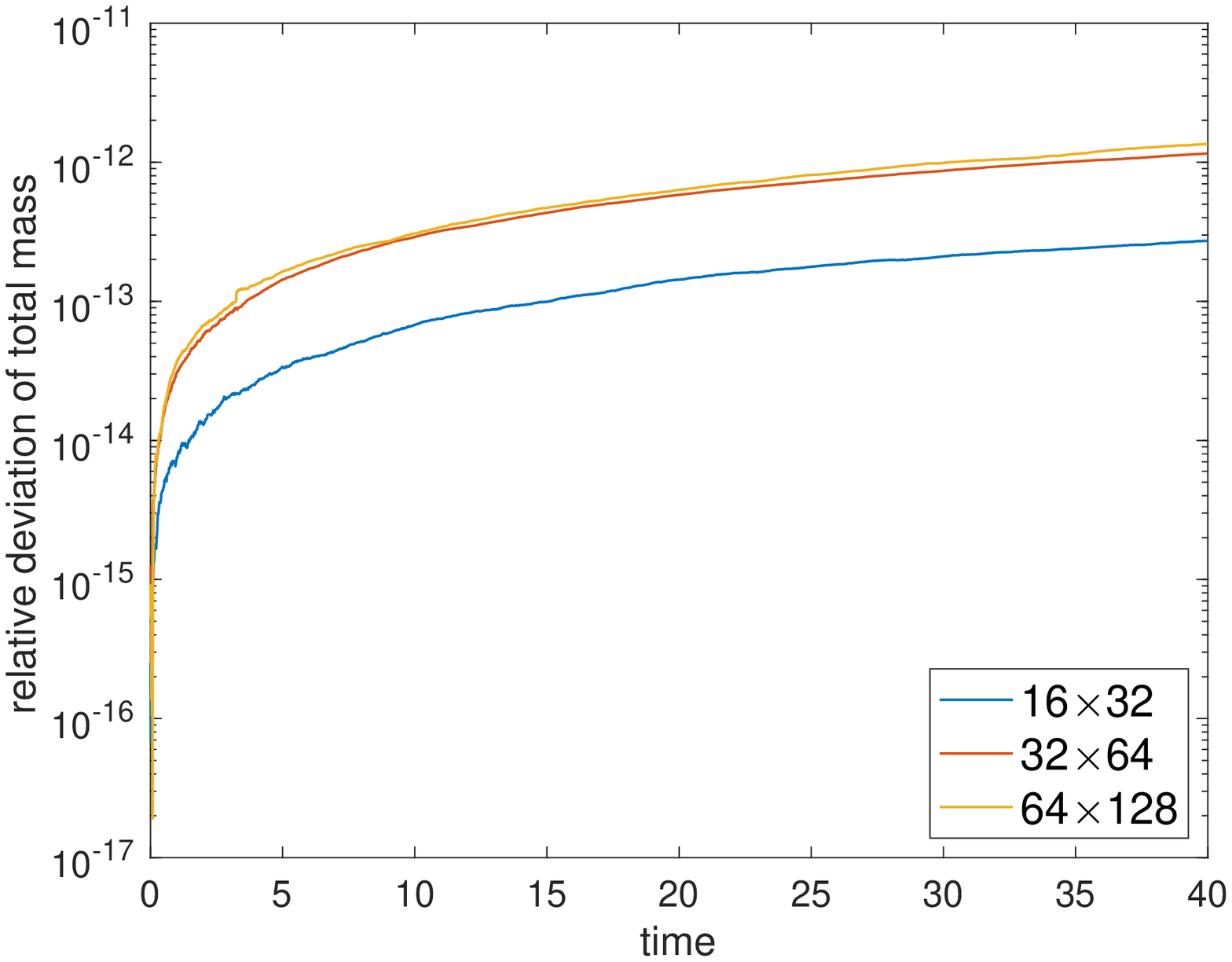}}
		\subfigure[]{\includegraphics[height=40mm]{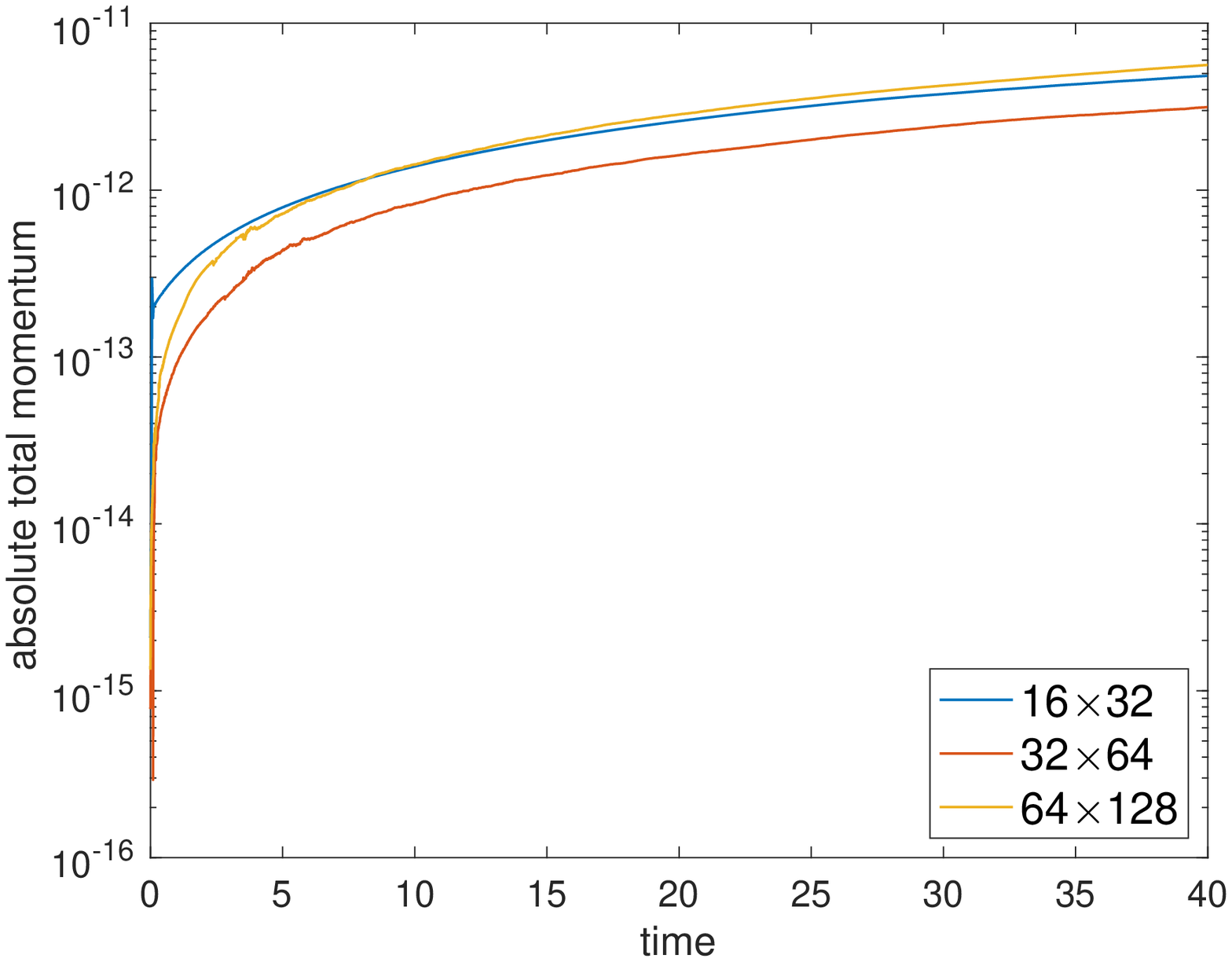}}
		\subfigure[]{\includegraphics[height=40mm]{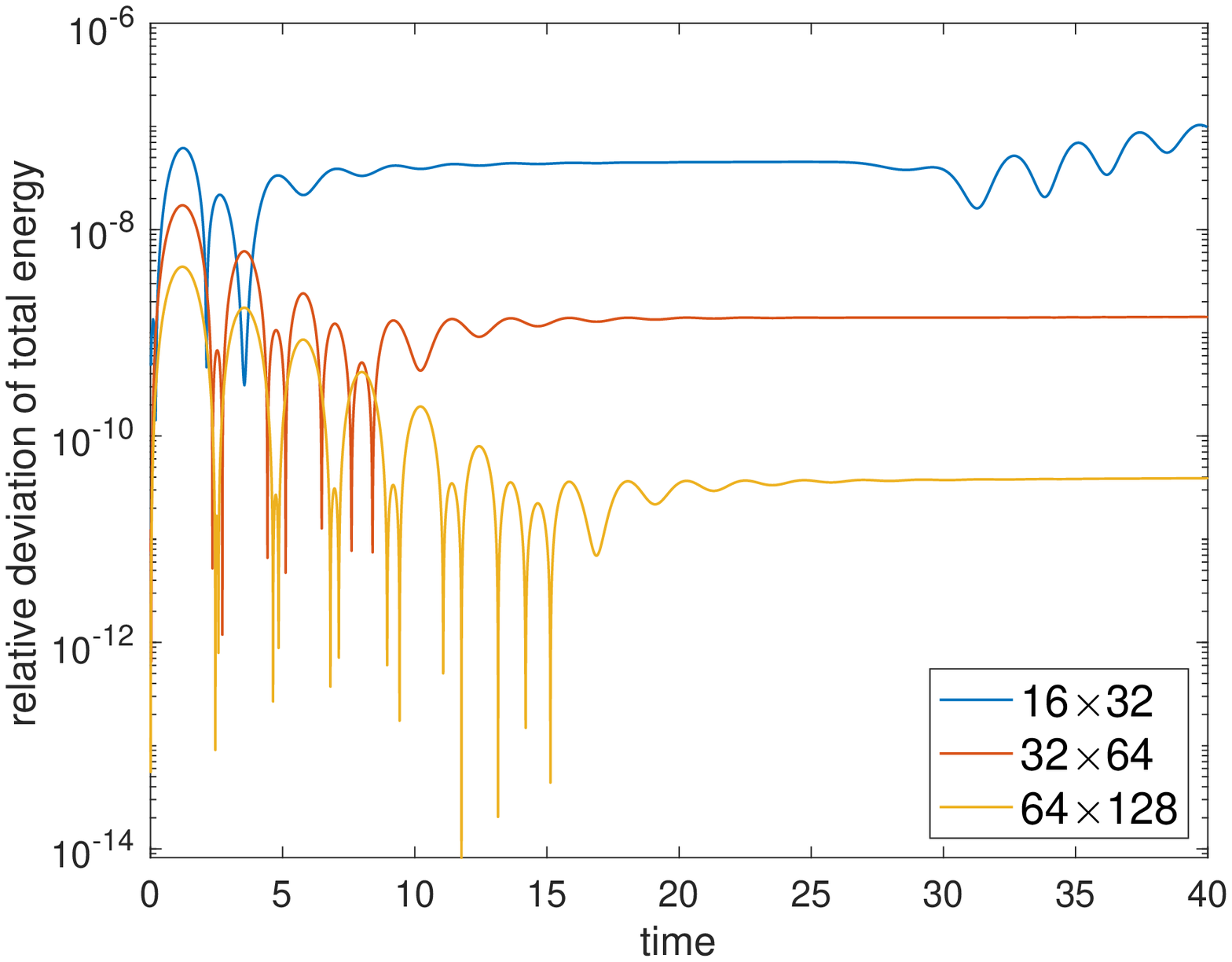}}
			\subfigure[]{\includegraphics[height=40mm]{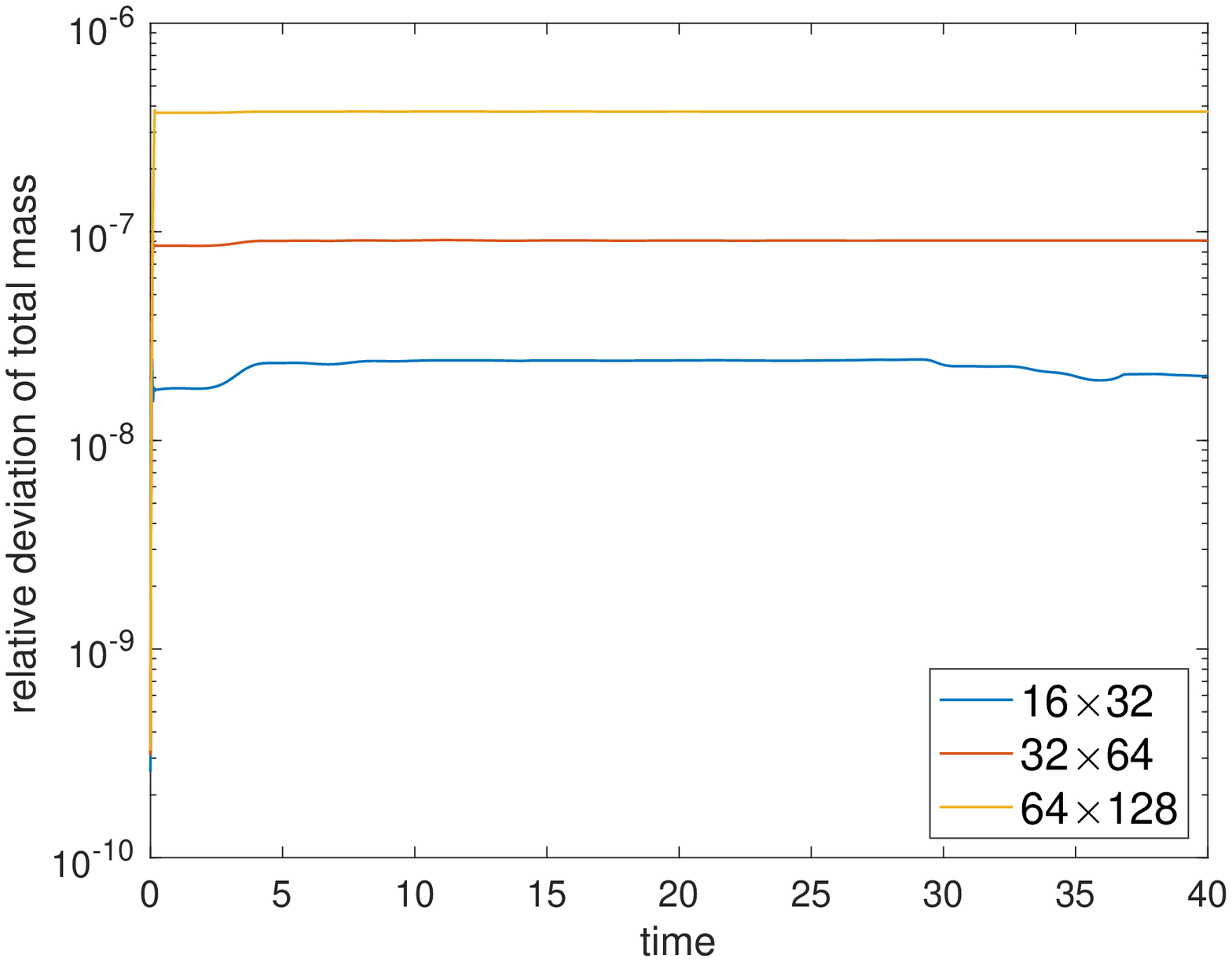}}
		\subfigure[]{\includegraphics[height=40mm]{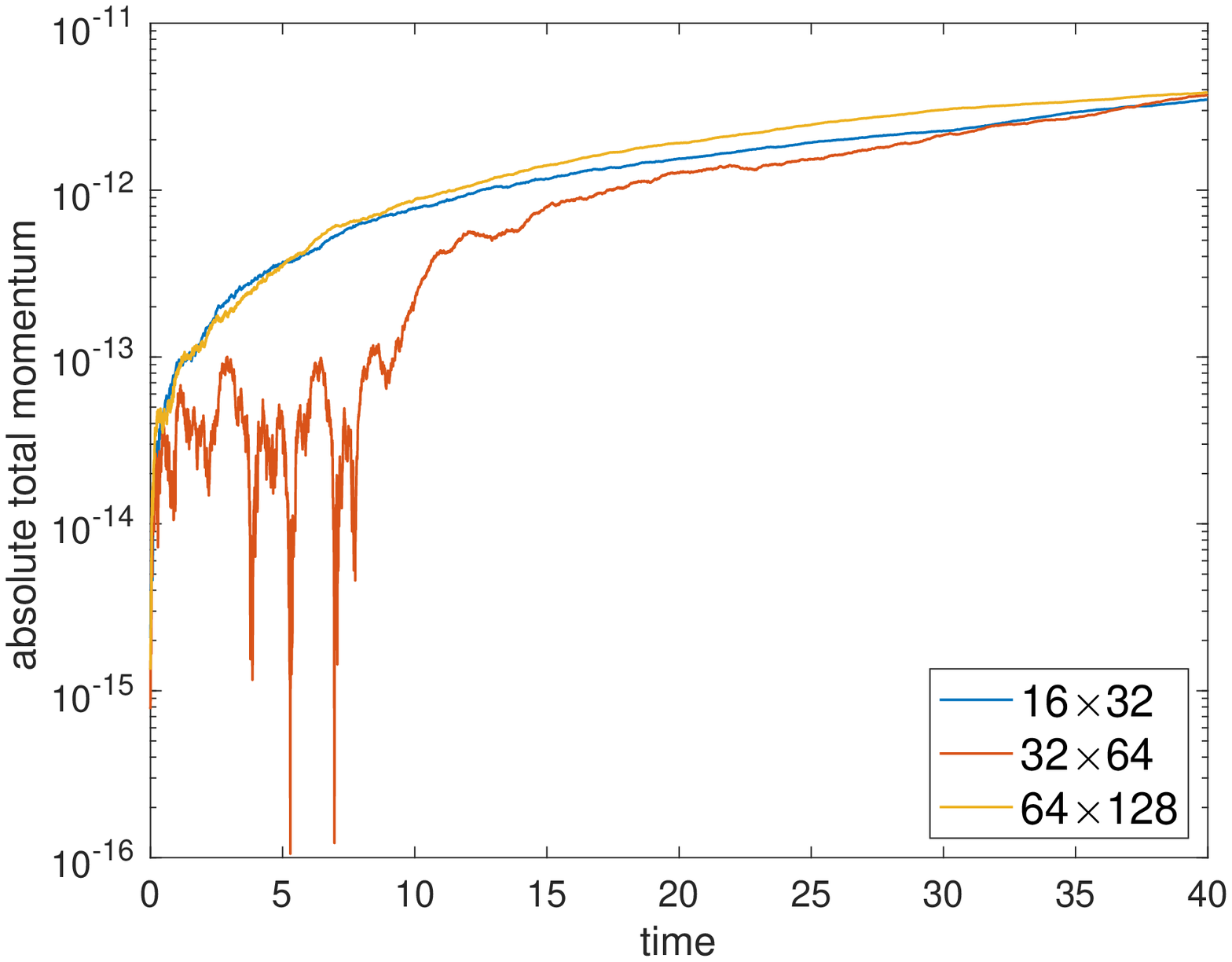}}
		\subfigure[]{\includegraphics[height=40mm]{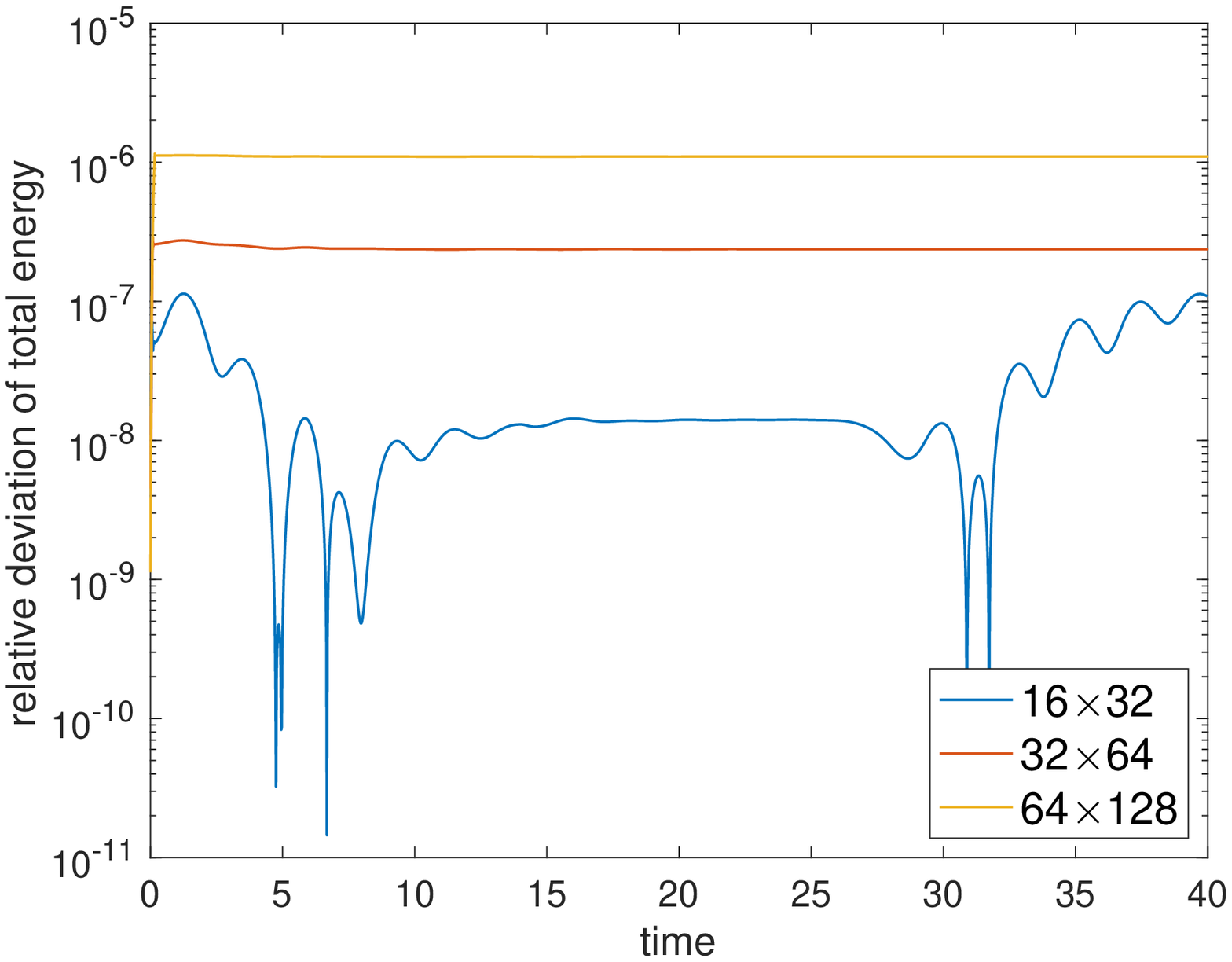}}
	\caption{Example \ref{ex:weak1d}.  The time evolution of relative deviation of total mass (a, d), absolute total momentum (b, d), and relative deviation of total energy (c, f).  Conservative method (a, b, c) and non-conservative method (d, e, f). $\varepsilon=10^{-5}$.}
	\label{fig:weak1d_invar}
\end{figure}

\begin{exa}\label{ex:strong1d}(Strong Laudau damping.) We consider the strong Landau damping test with the initial condition \eqref{eq:landau1d}
and a bigger perturbation parameter $\alpha = 0.5$. 
 \end{exa} 

Two truncation thresholds $\varepsilon = 10^{-3},\, 10^{-4}$ are used to compare the performance of the proposed conservative method with the non-conservative one. In Figure \ref{ex:strong1d}, we report the time evolution of the electric energy together with the ranks of the numerical solutions for $\varepsilon = 10^{-3}$. We observe that the conservative method is able to capture correctly the nonlinear dynamics of the strong Landau damping as opposed to the non-conservative method. This is because the truncation error due to the large threshold used greatly pollutes the accuracy for the non-conservative method, while by design the conservative method exactly conserves the mass and momentum densities in the low rank setting. Such conservation help resolve the nonlinear Vlasov dynamics with a relatively large truncation threshold. As observed in Figure \ref{fig:strong1d_invar1}, the conservative method can conserve the total mass and momentum up to the machine precision regardless of the mesh size used. Again, the energy conservation is not observed, but the conservation error decreases with mesh refinement, which is not the case for the non-conservative method. Then we consider a smaller truncation threshold $\varepsilon = 10^{-4}$, and the truncation error is reduced accordingly. Both methods generate consitent results as plotted in Figure \ref{fig:strong1d_elec2}. As with the weak case, the conservative method has slightly larger ranks than the non-conservative method. We have a similar observation of the methods in 
conserving the invariants in Figure \ref{fig:strong1d_invar2}, as that in Figure \ref{fig:strong1d_invar1}. 

\begin{figure}[h!]
	\centering
	\subfigure[]{\includegraphics[height=50mm]{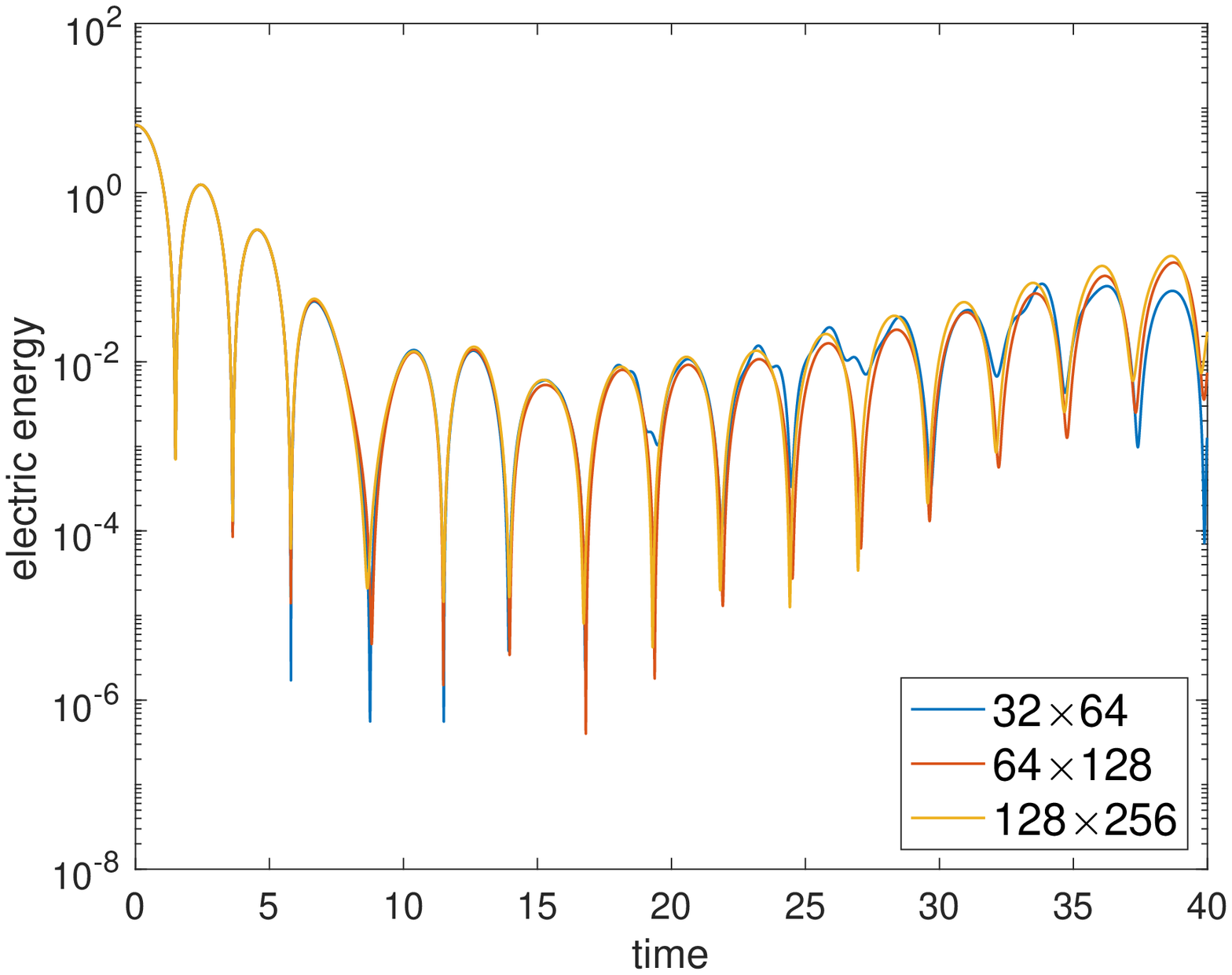}}
		\subfigure[]{\includegraphics[height=50mm]{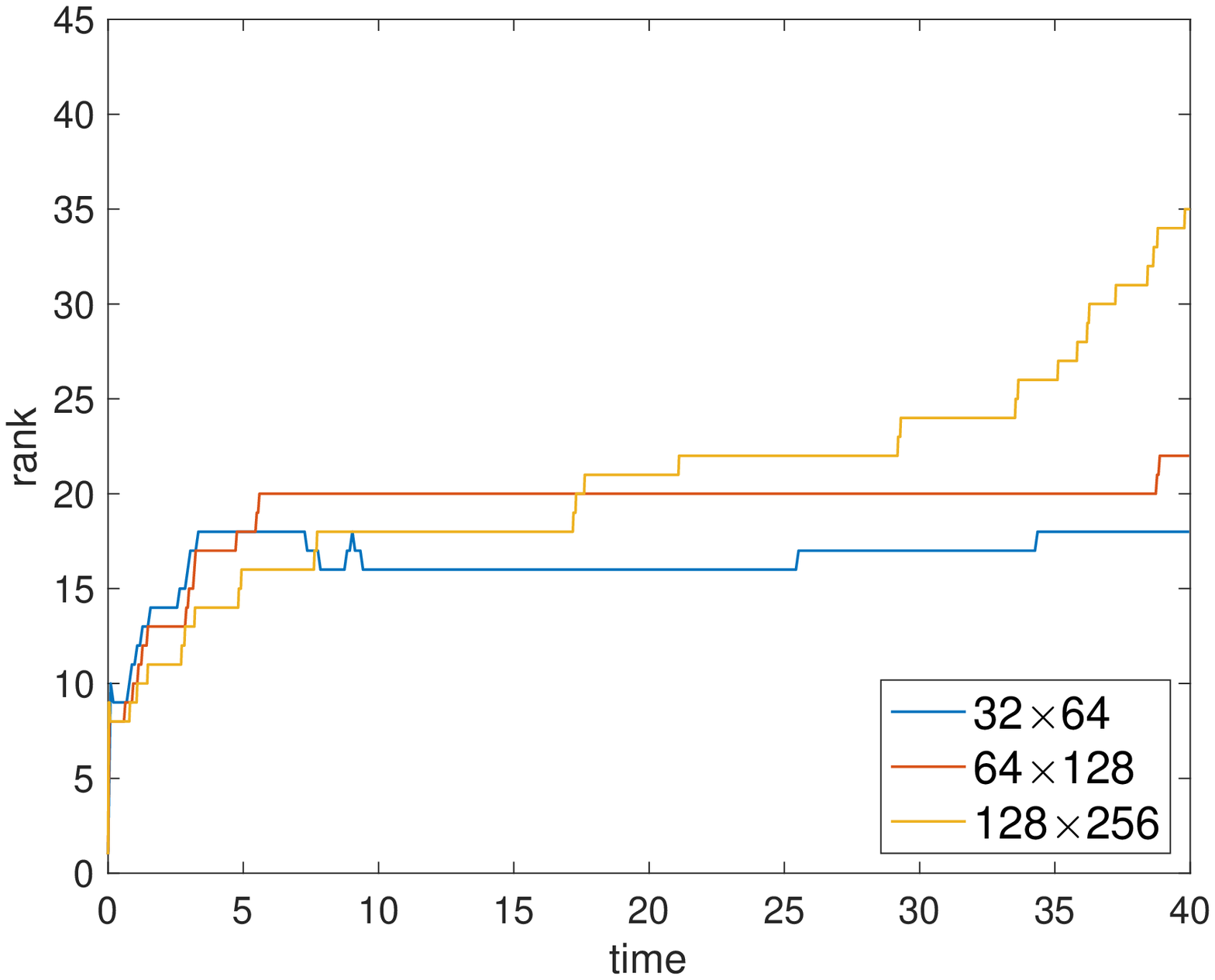}}
			\subfigure[]{\includegraphics[height=50mm]{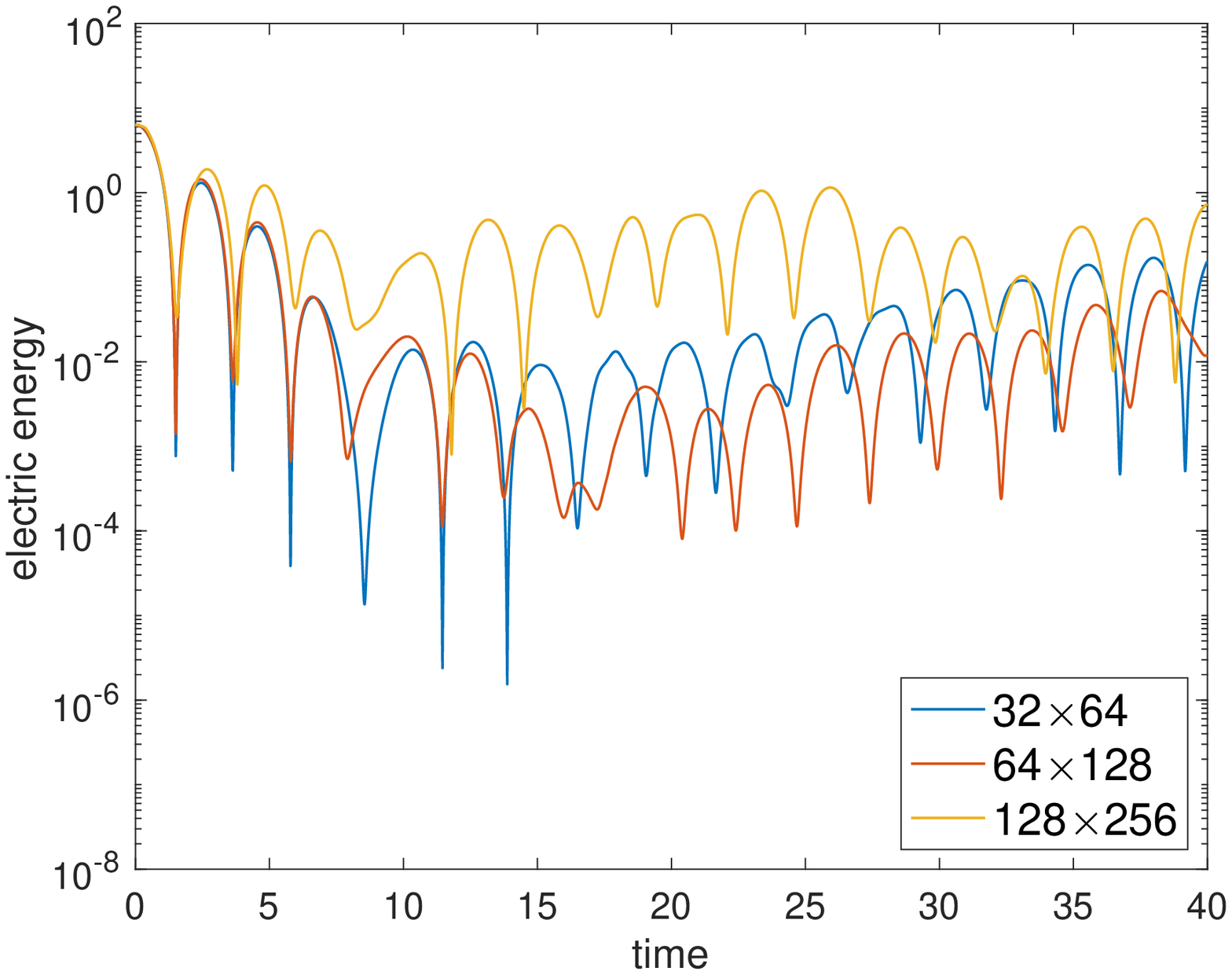}}
		\subfigure[]{\includegraphics[height=50mm]{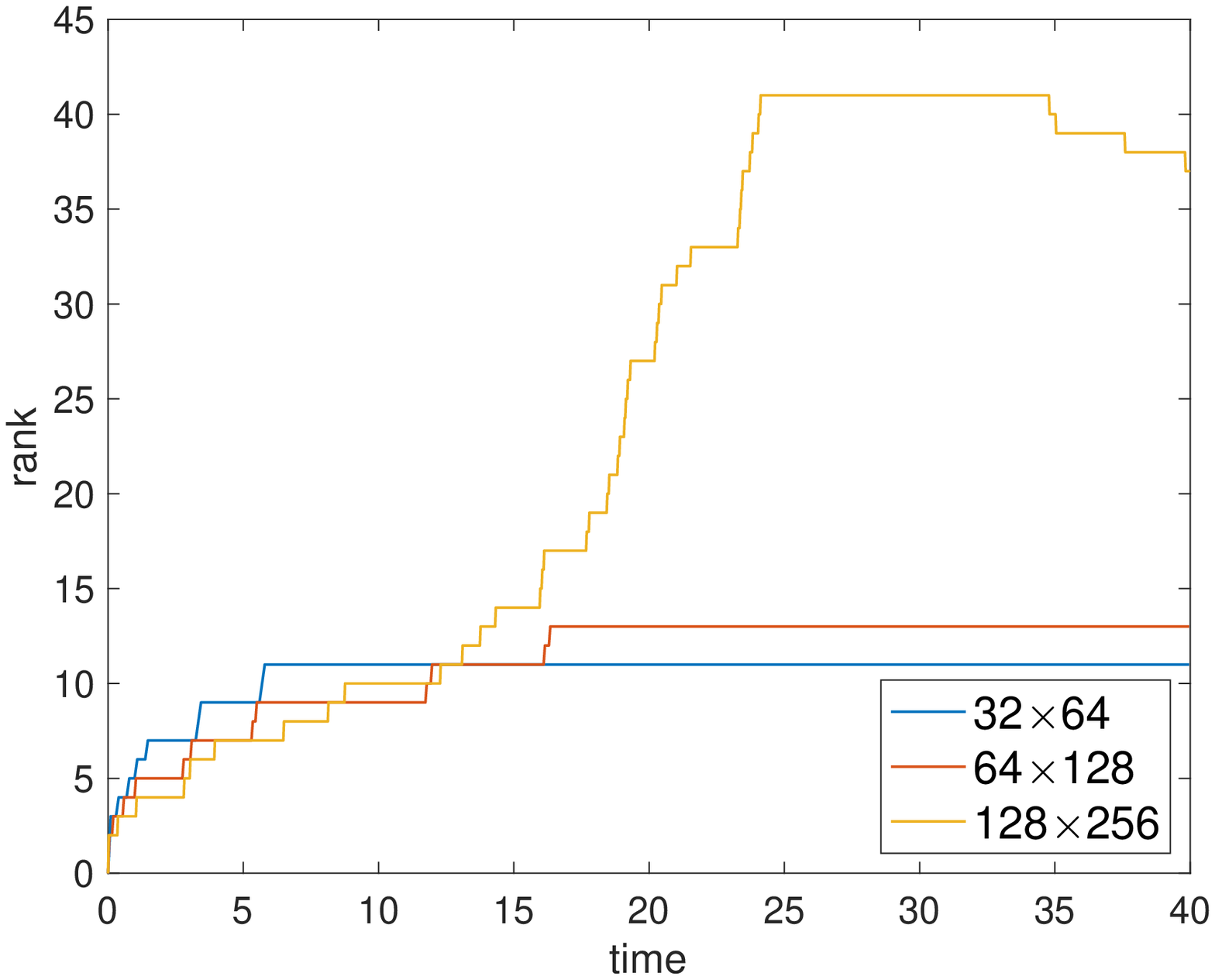}}
	\caption{Example \ref{ex:strong1d}.  The time evolution of the electric energy (a, c) and the rank of the numerical solutions (b, d). Conservative method (a, b) and non-conservative method (c, d). $\varepsilon=10^{-3}$.}
	\label{fig:strong1d_elec1}
\end{figure}

\begin{figure}[h!]
	\centering
	\subfigure[]{\includegraphics[height=40mm]{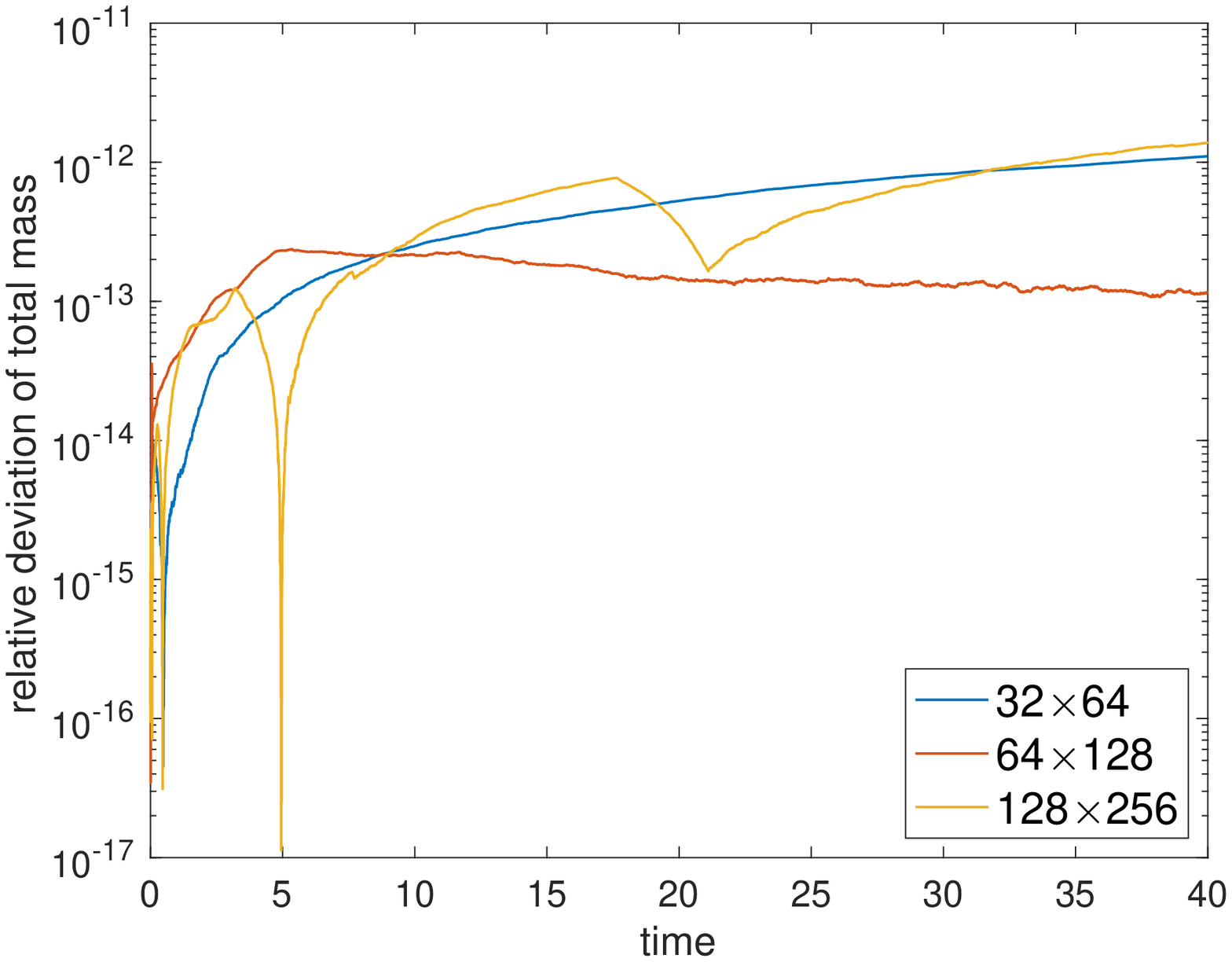}}
		\subfigure[]{\includegraphics[height=40mm]{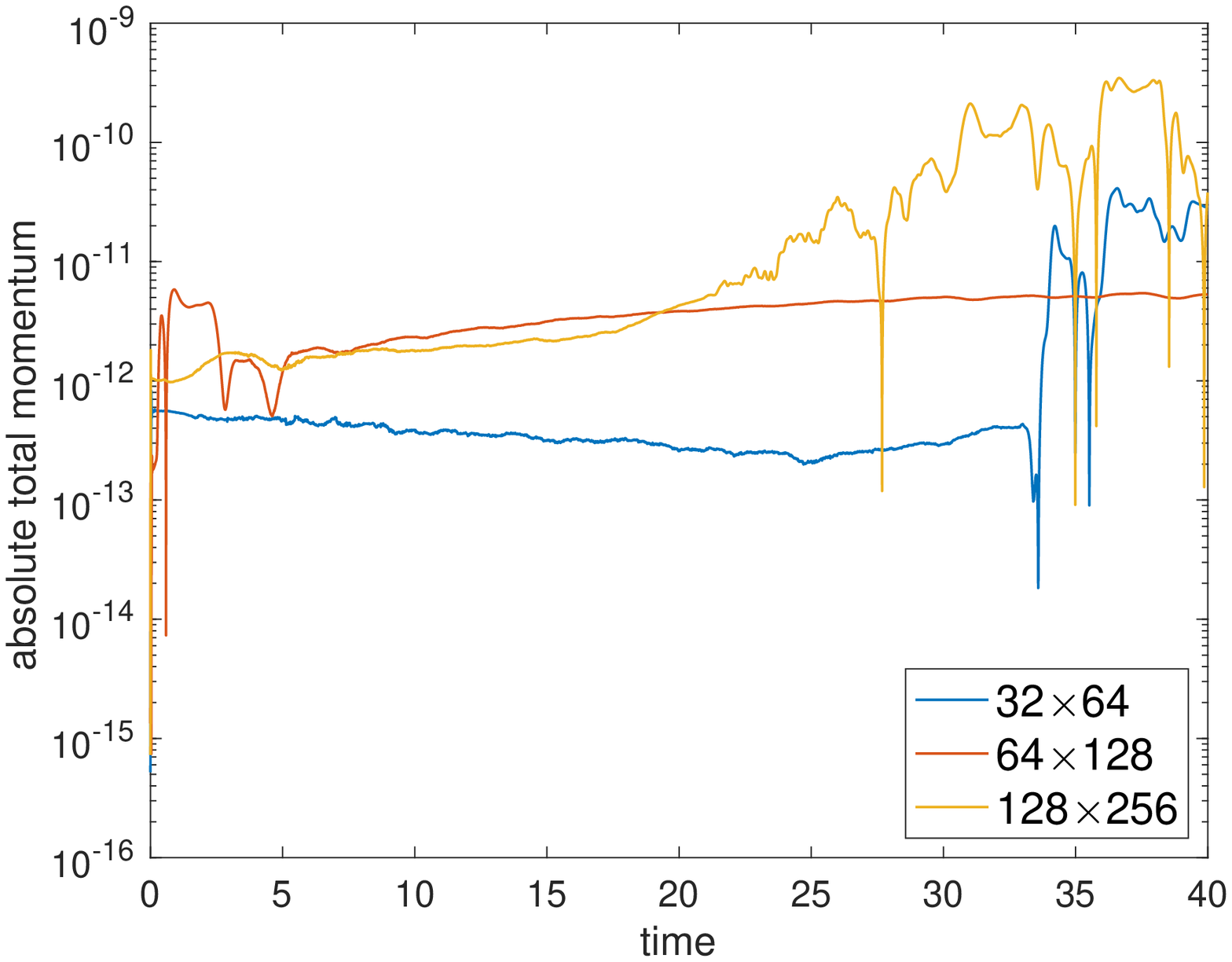}}
		\subfigure[]{\includegraphics[height=40mm]{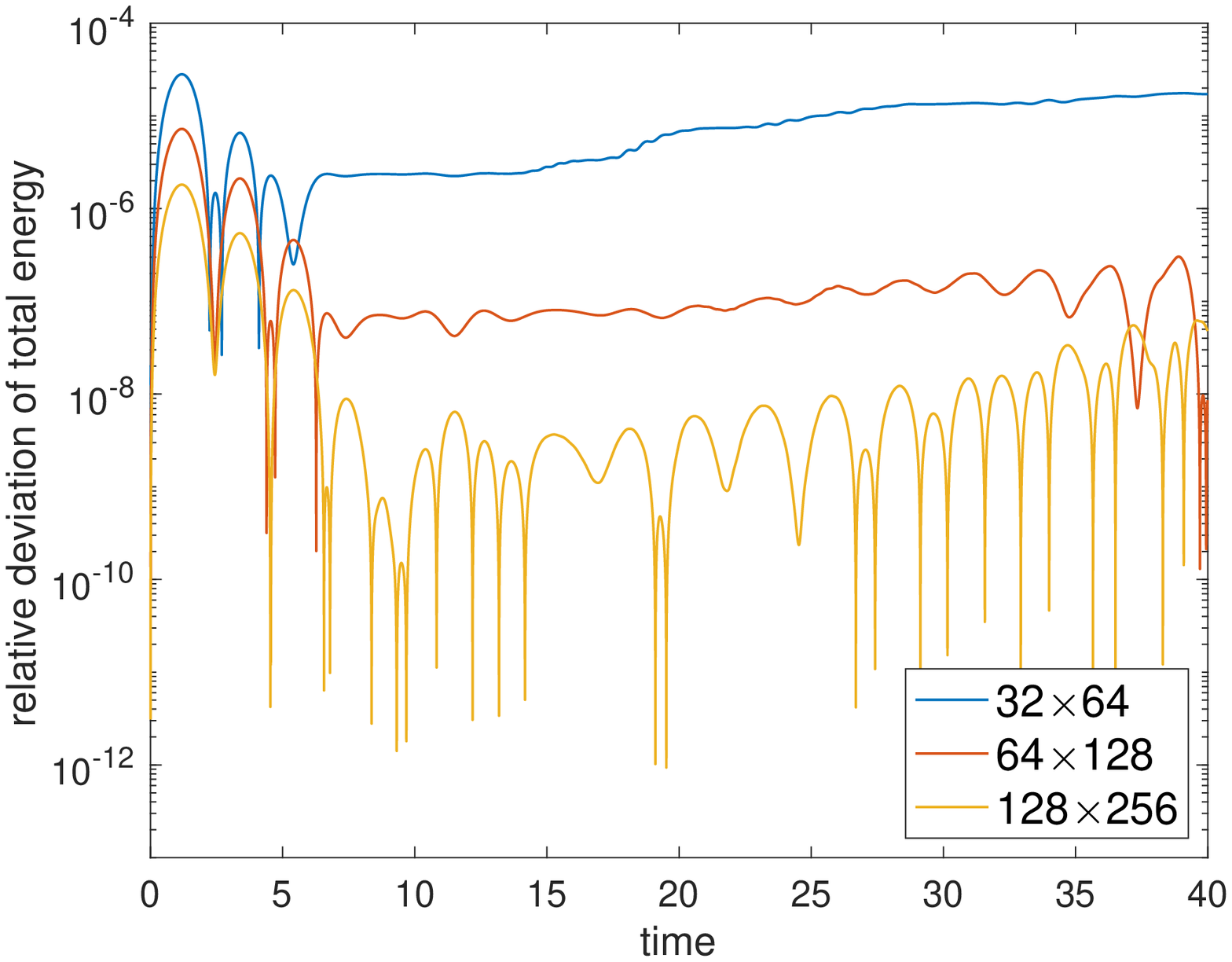}}
			\subfigure[]{\includegraphics[height=40mm]{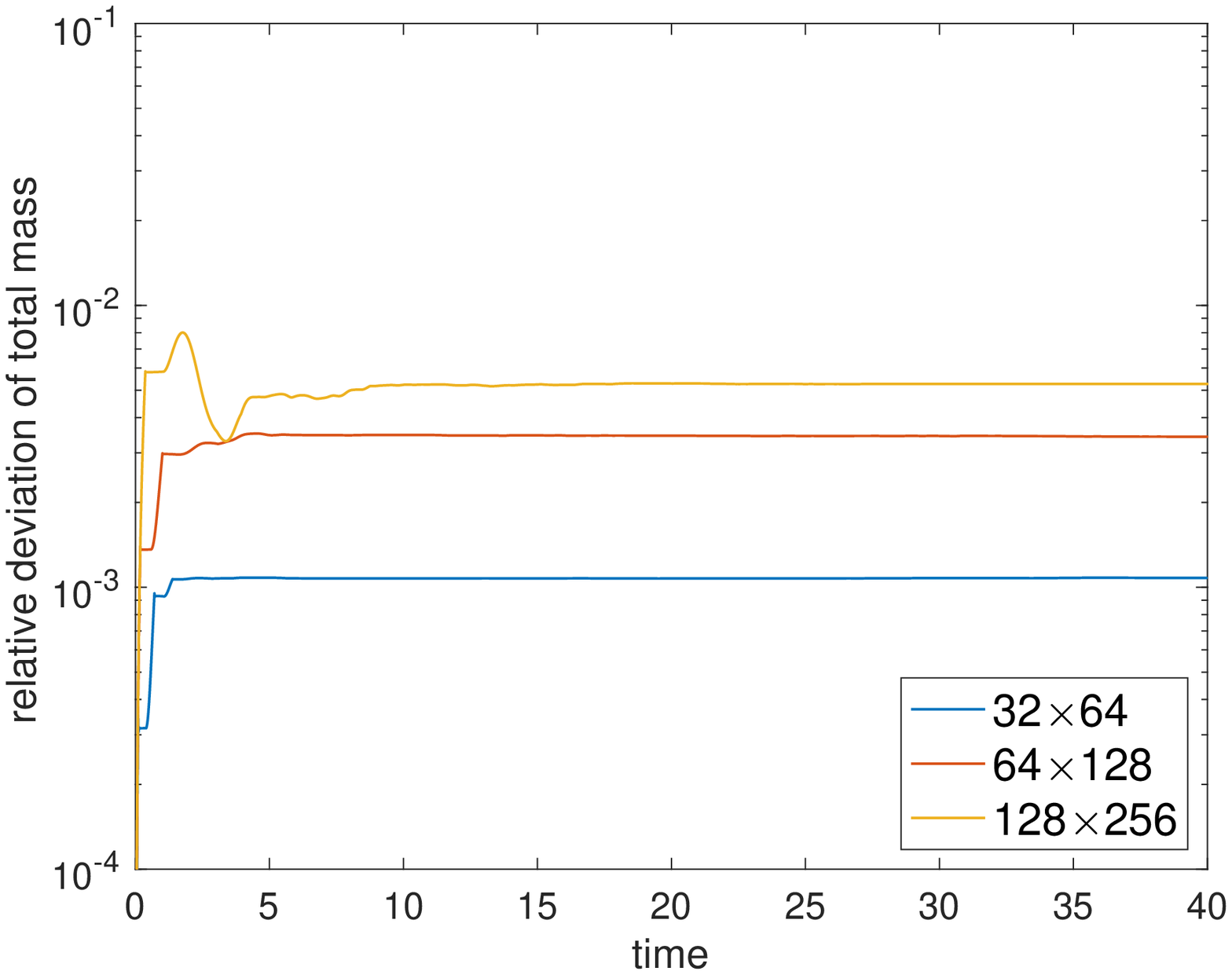}}
		\subfigure[]{\includegraphics[height=40mm]{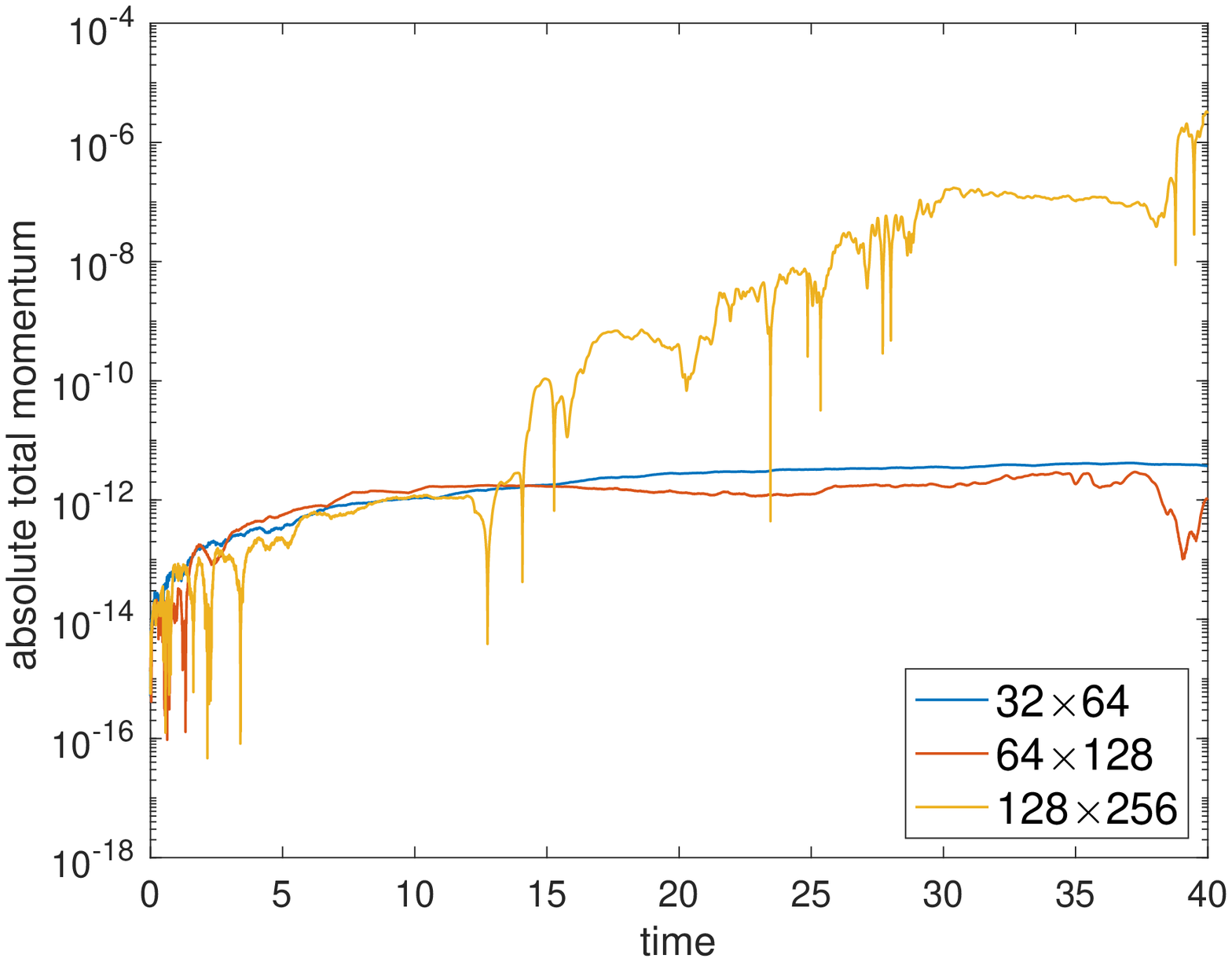}}
		\subfigure[]{\includegraphics[height=40mm]{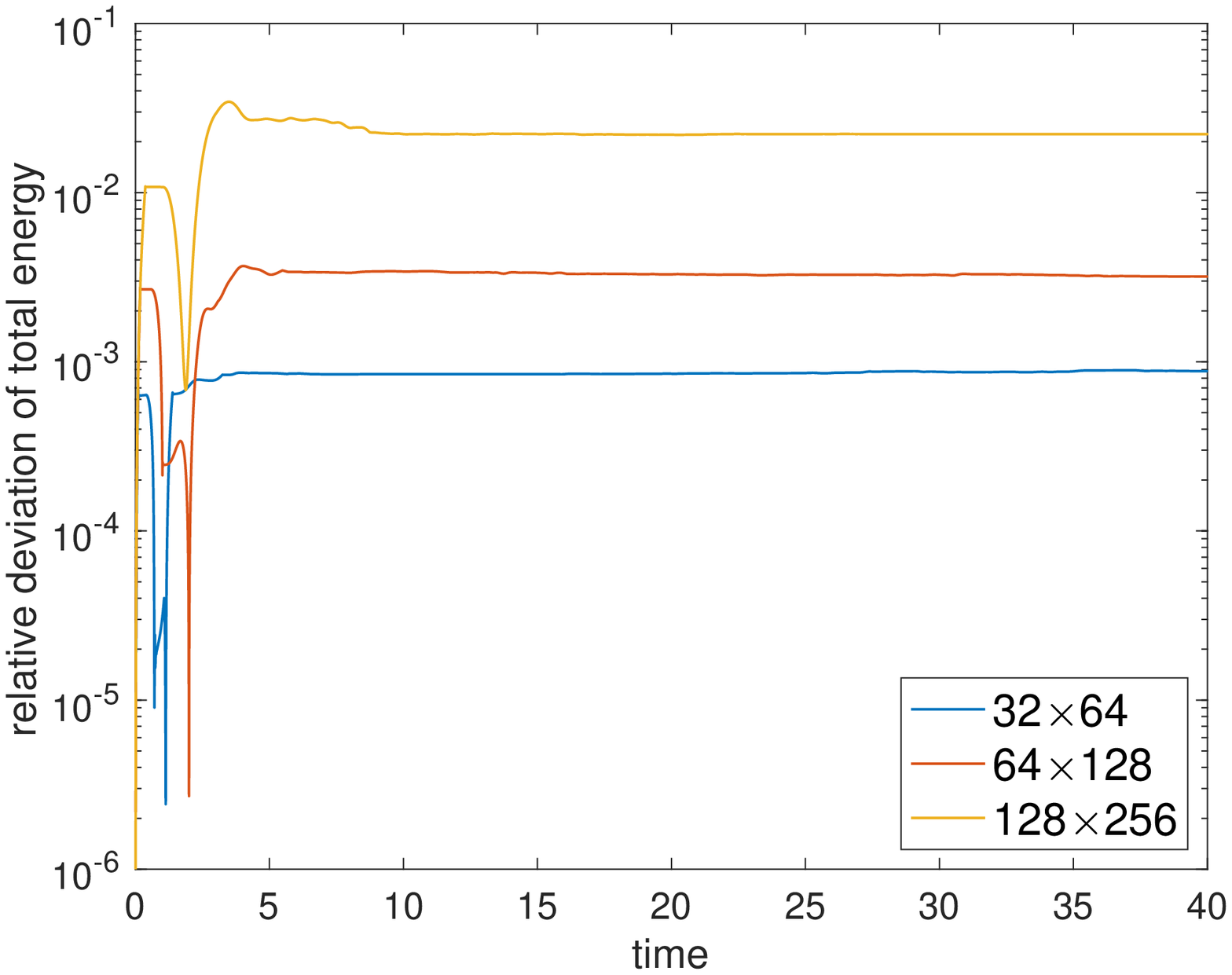}}
	\caption{Example \ref{ex:strong1d}.  The time evolution of relative deviation of total mass (a, d), absolute total momentum (b, e), and relative deviation of total energy (c, f). Conservative method (a, b, c) and non-conservative method (d, e, f). $\varepsilon=10^{-3}$.}
	\label{fig:strong1d_invar1}
\end{figure}

\begin{figure}[h!]
	\centering
	\subfigure[]{\includegraphics[height=50mm]{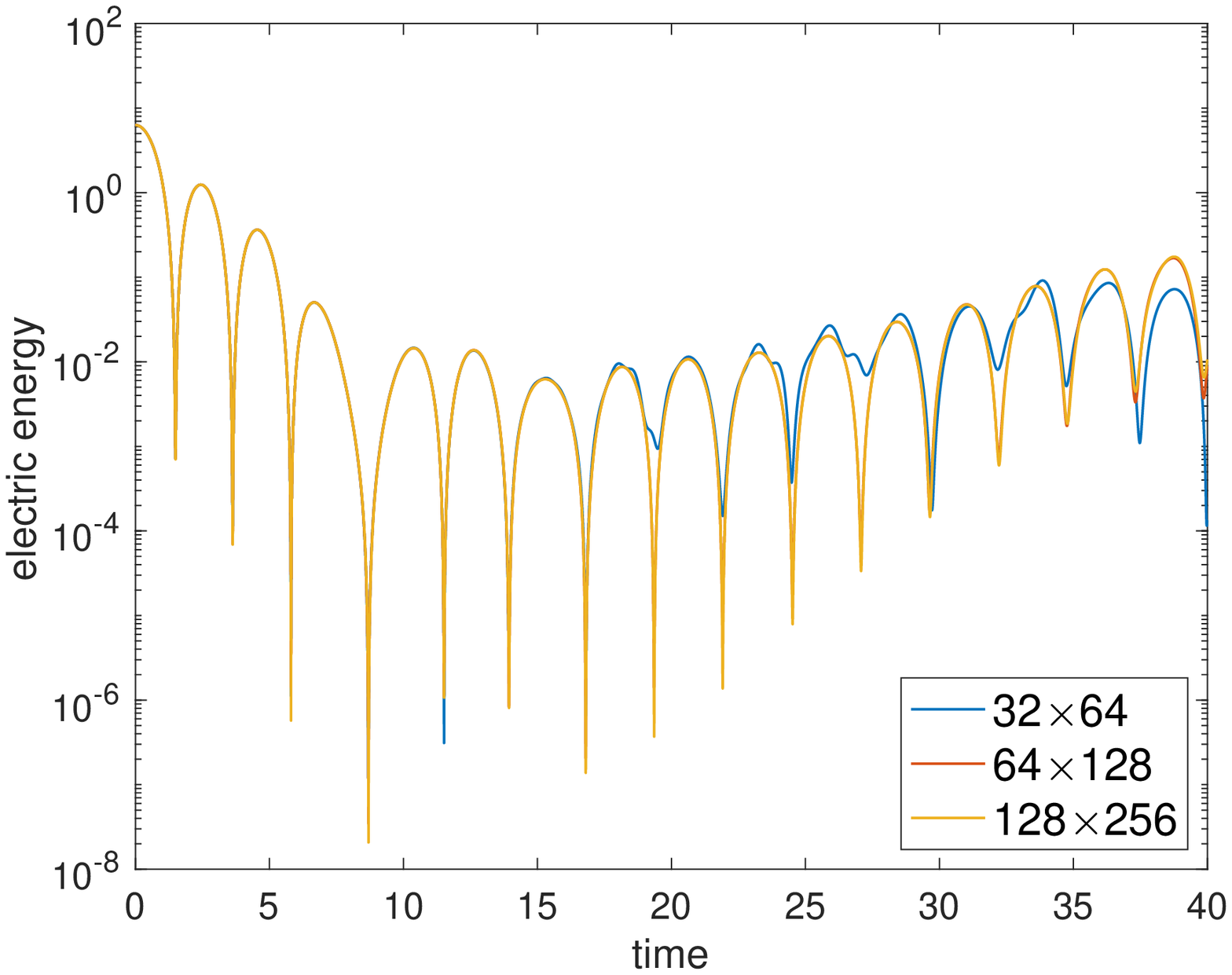}}
		\subfigure[]{\includegraphics[height=50mm]{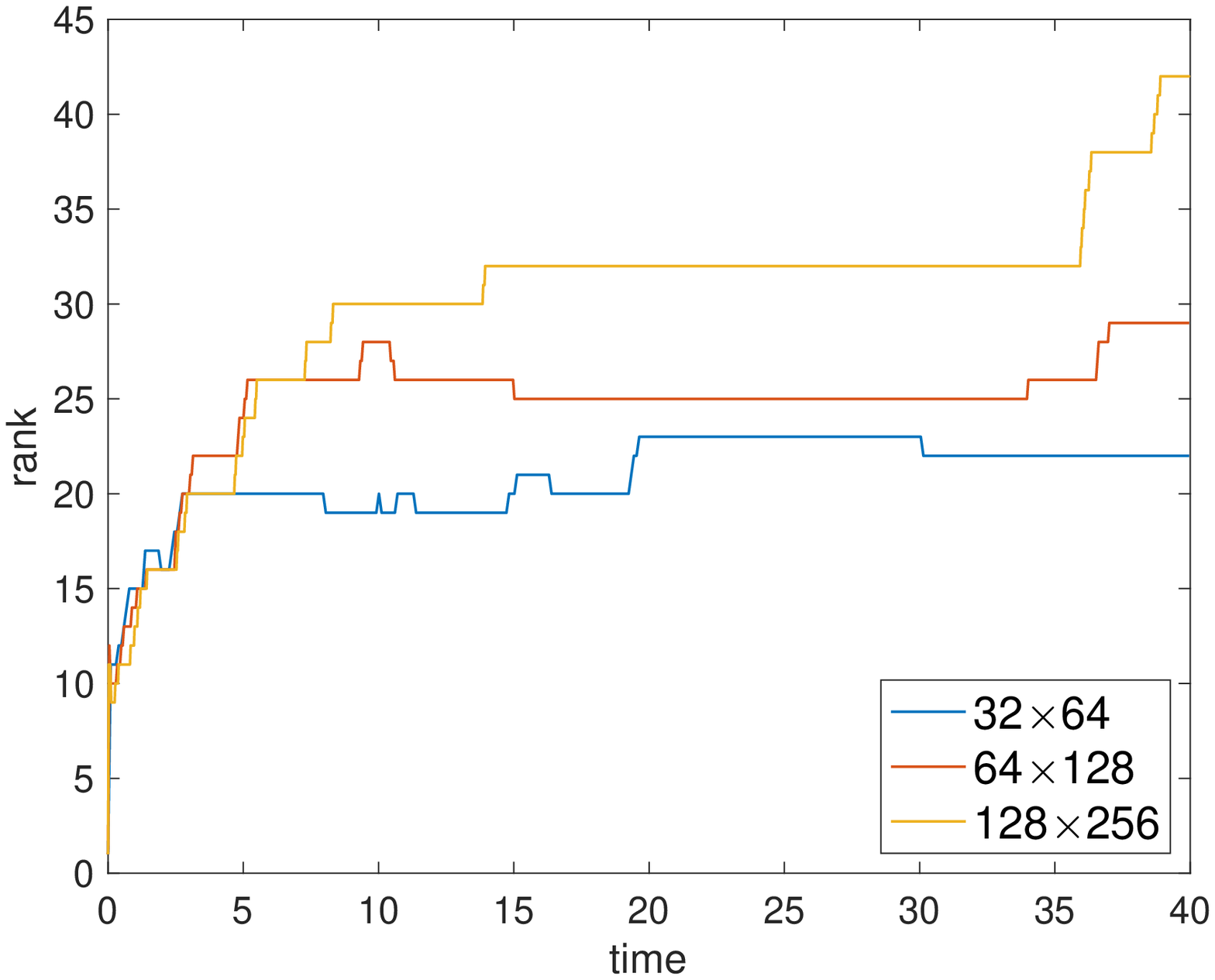}}
			\subfigure[]{\includegraphics[height=50mm]{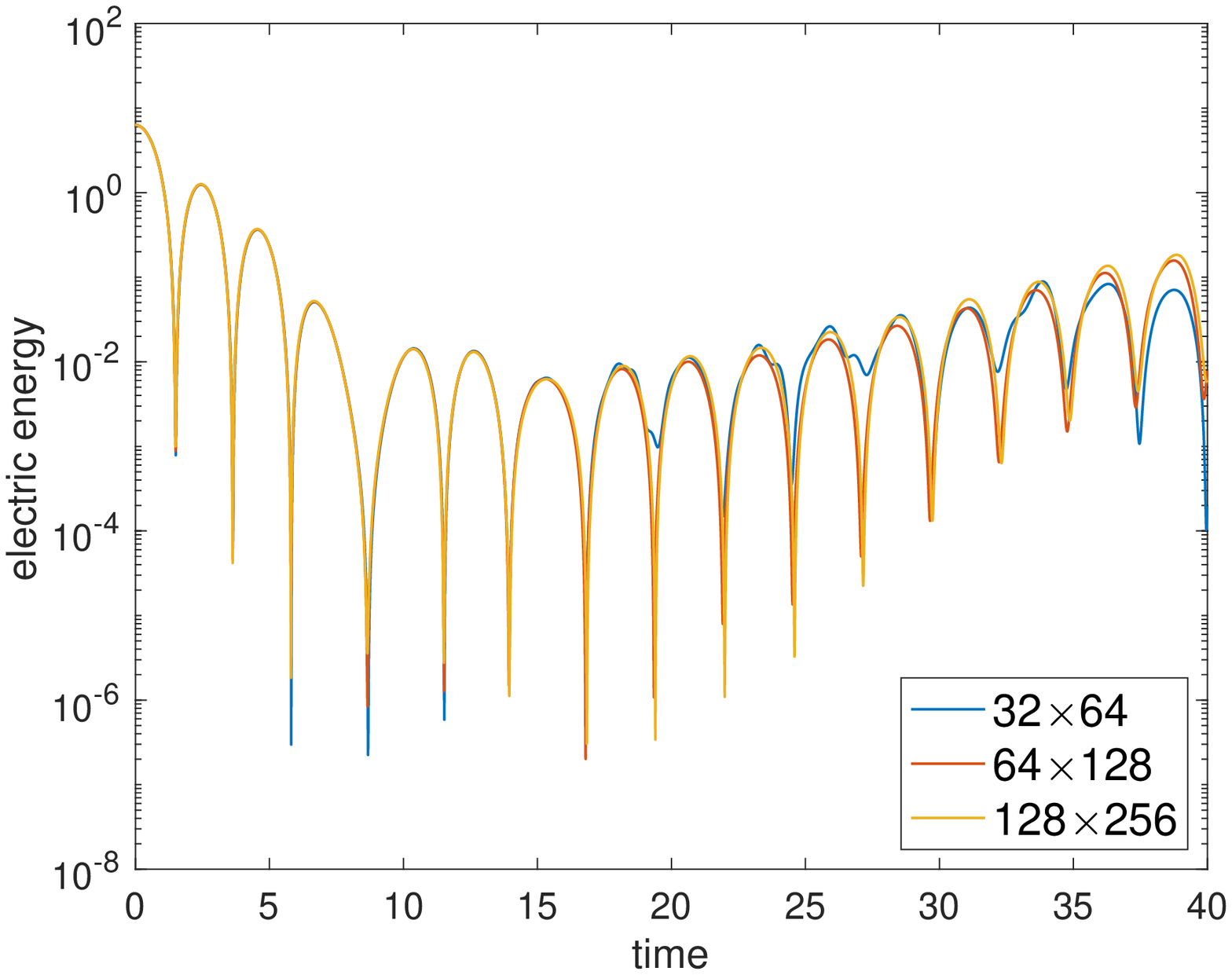}}
		\subfigure[]{\includegraphics[height=50mm]{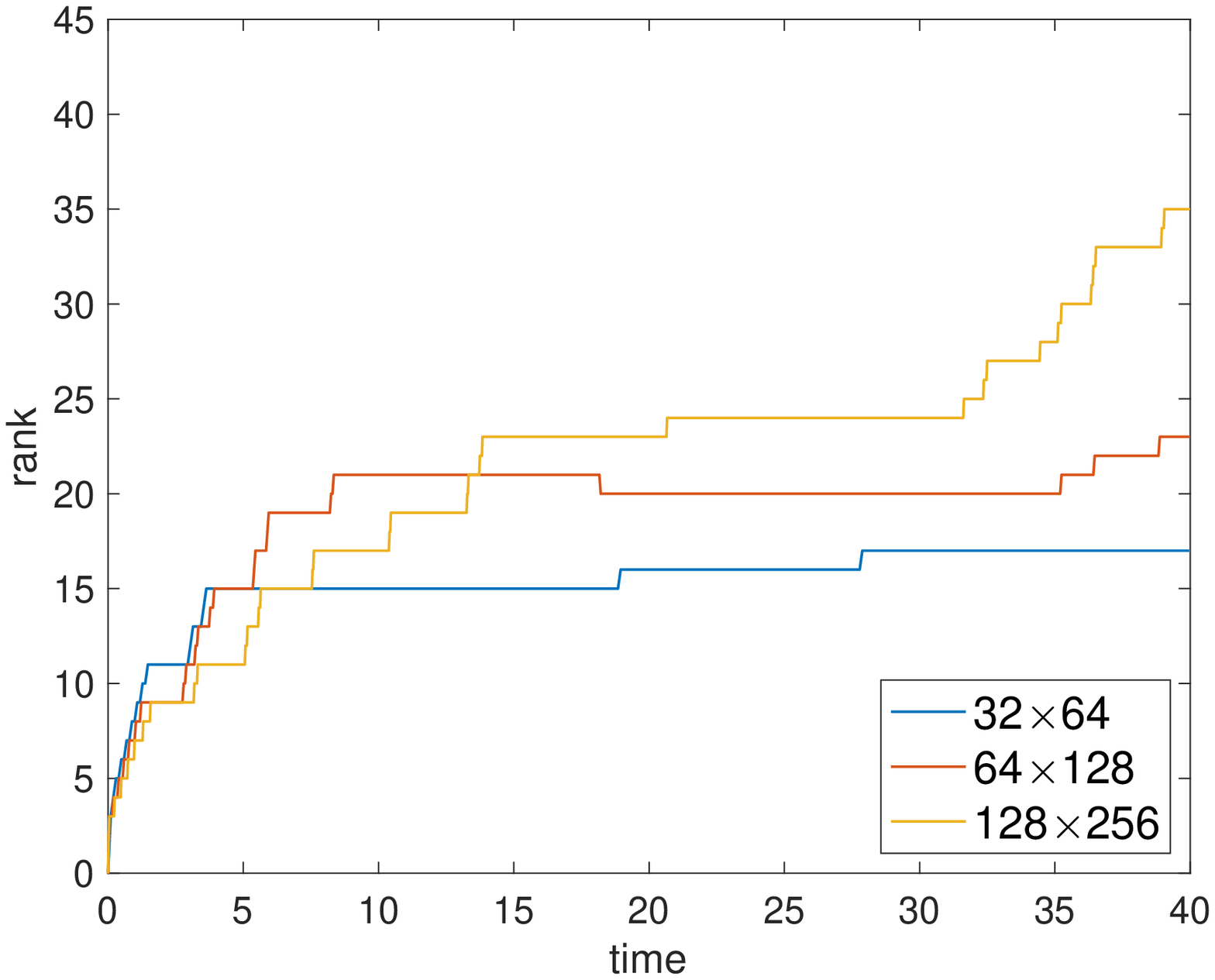}}
	\caption{Example \ref{ex:strong1d}.  The time evolution of the electric energy (a, c) and the ranks of the numerical solutions (b, d). Conservative method (a, b) and non-conservative method (c, d).  $\varepsilon=10^{-4}$.}
	\label{fig:strong1d_elec2}
\end{figure}

\begin{figure}[h!]
	\centering
	\subfigure[]{\includegraphics[height=40mm]{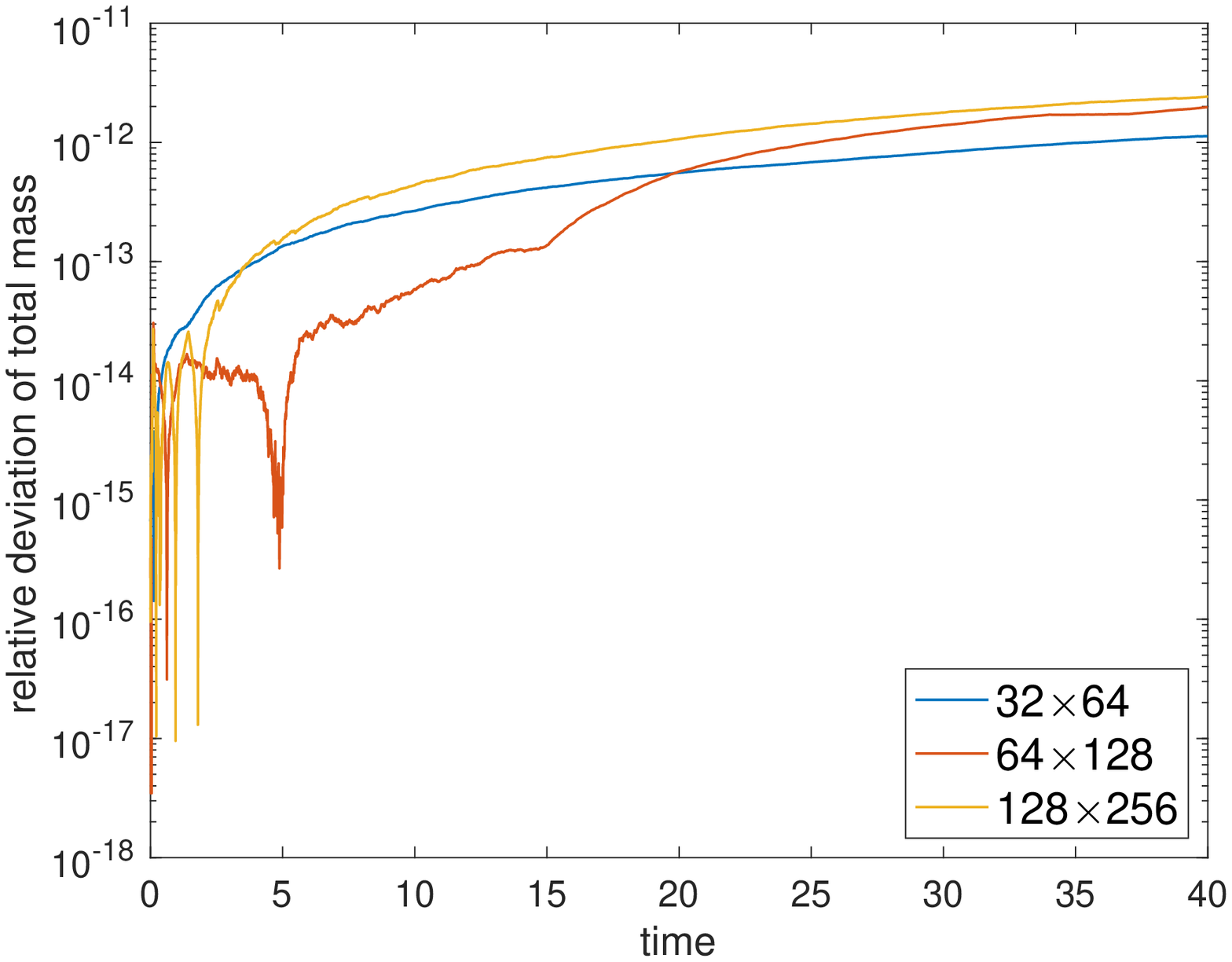}}
		\subfigure[]{\includegraphics[height=40mm]{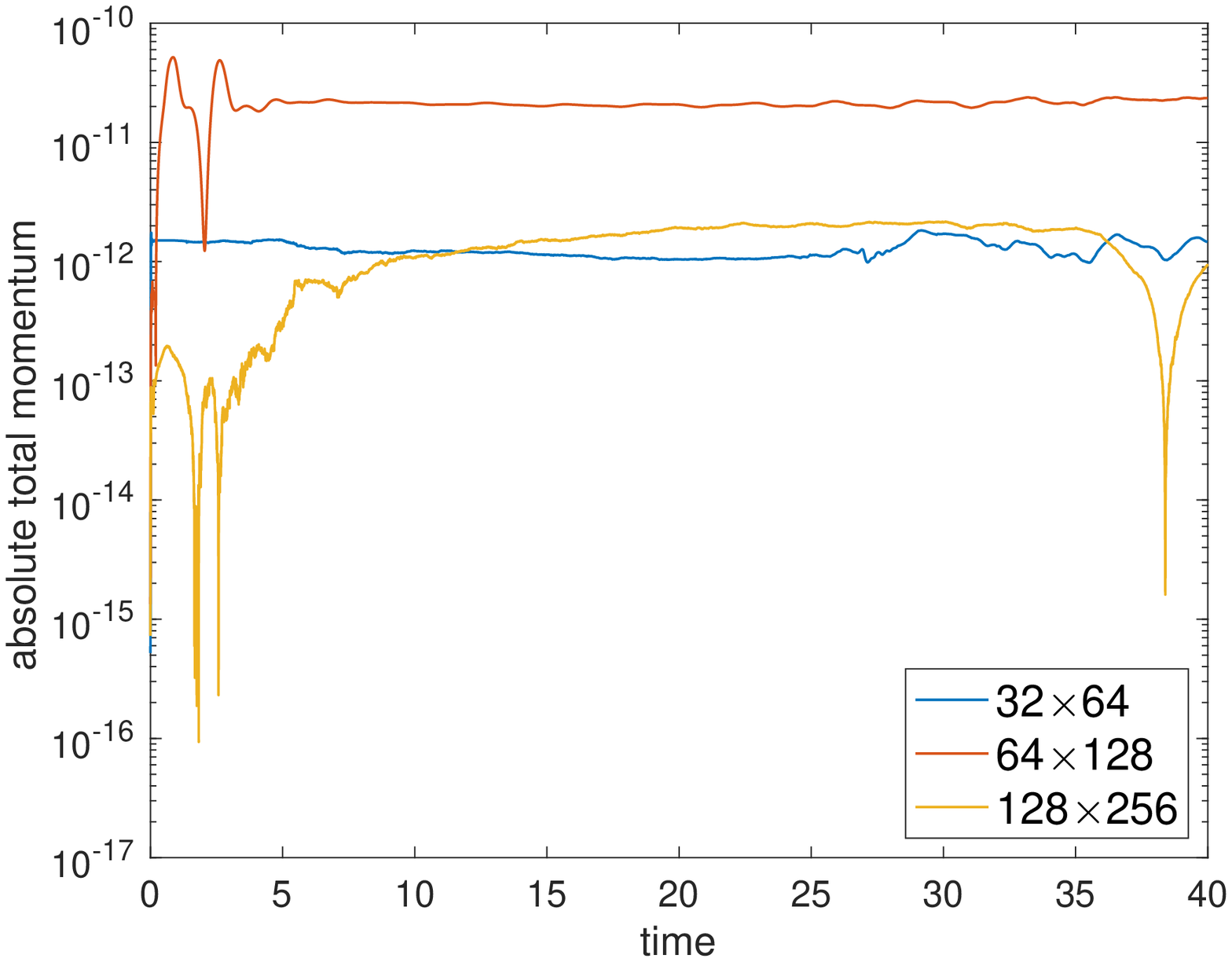}}
		\subfigure[]{\includegraphics[height=40mm]{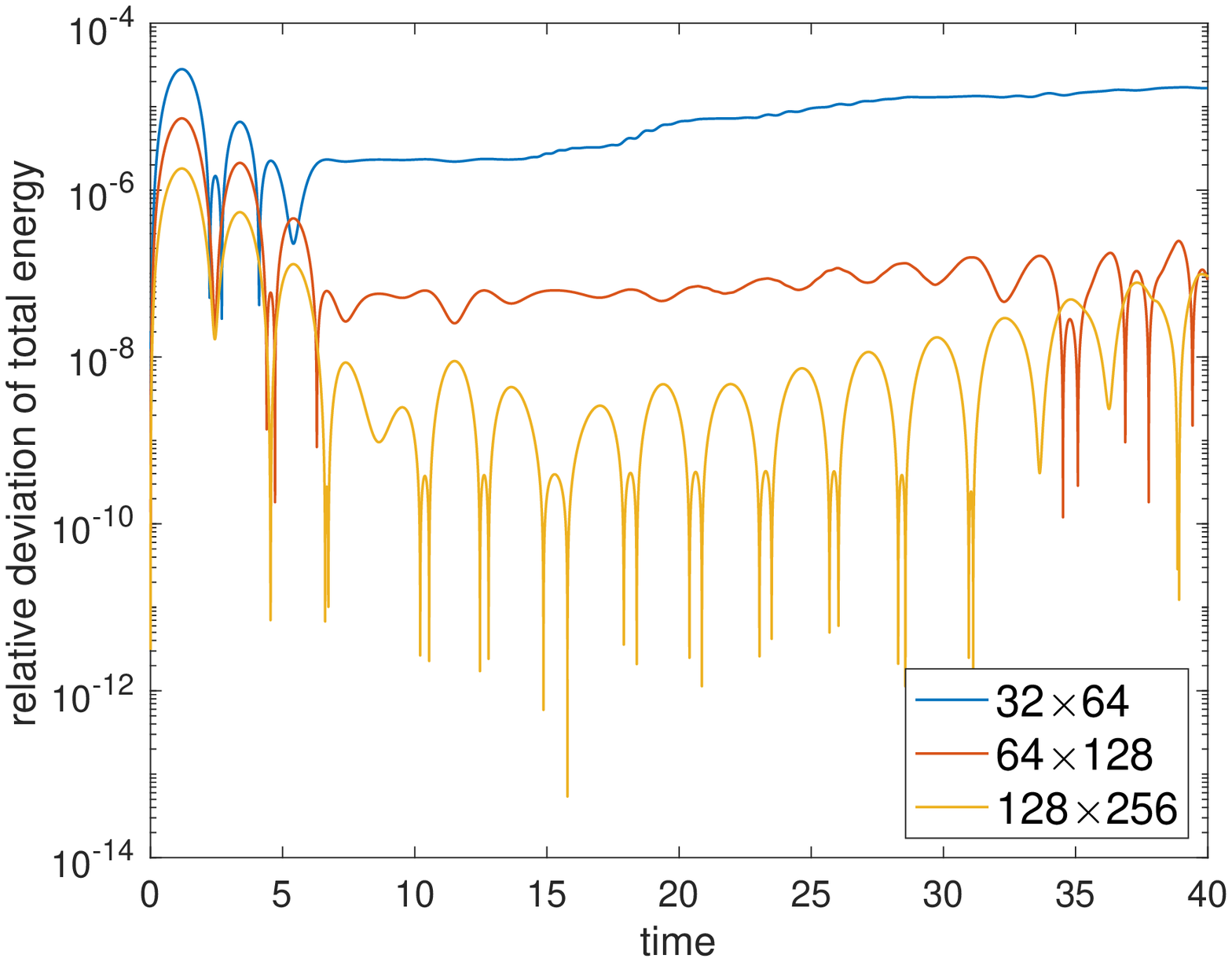}}
			\subfigure[]{\includegraphics[height=40mm]{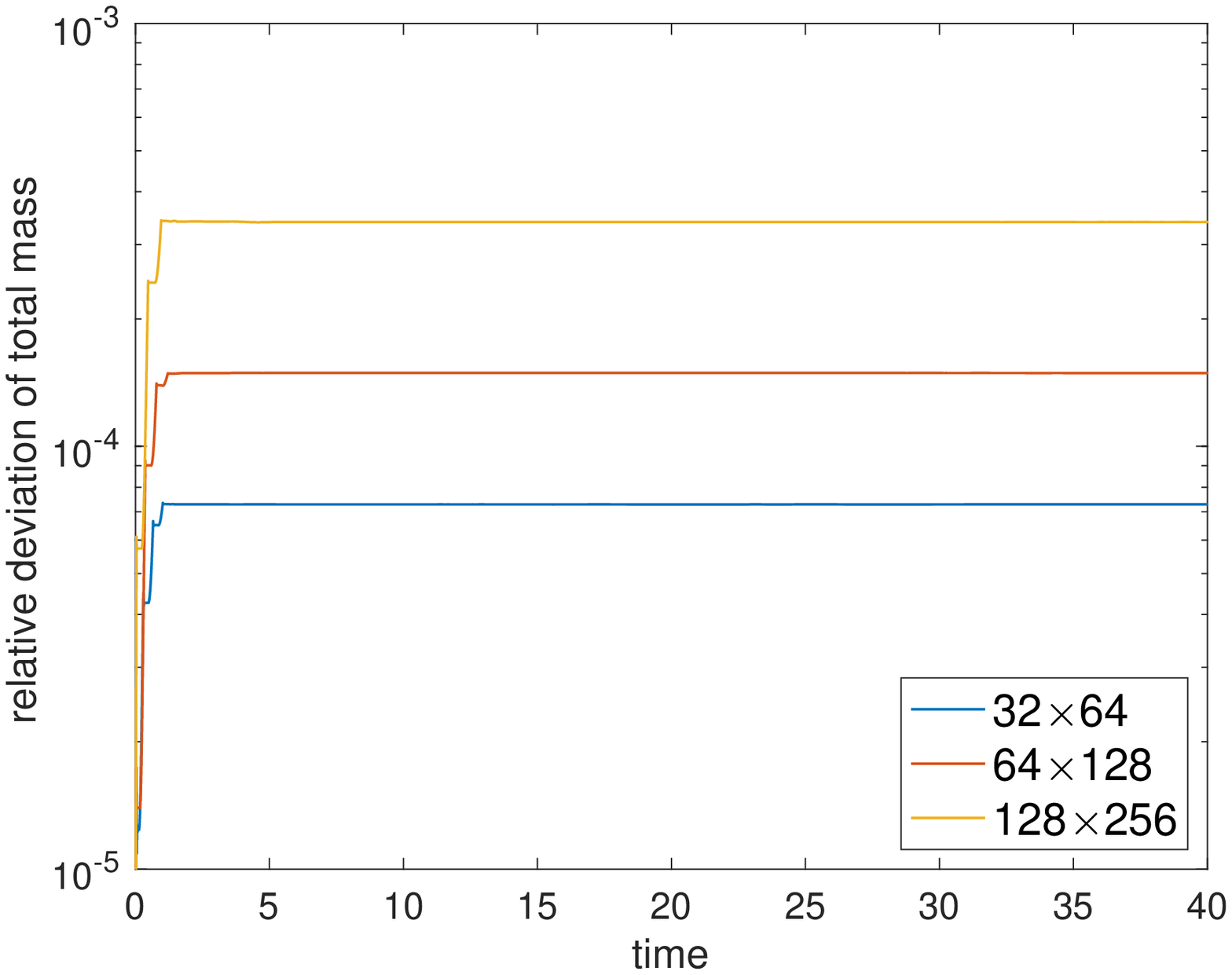}}
		\subfigure[]{\includegraphics[height=40mm]{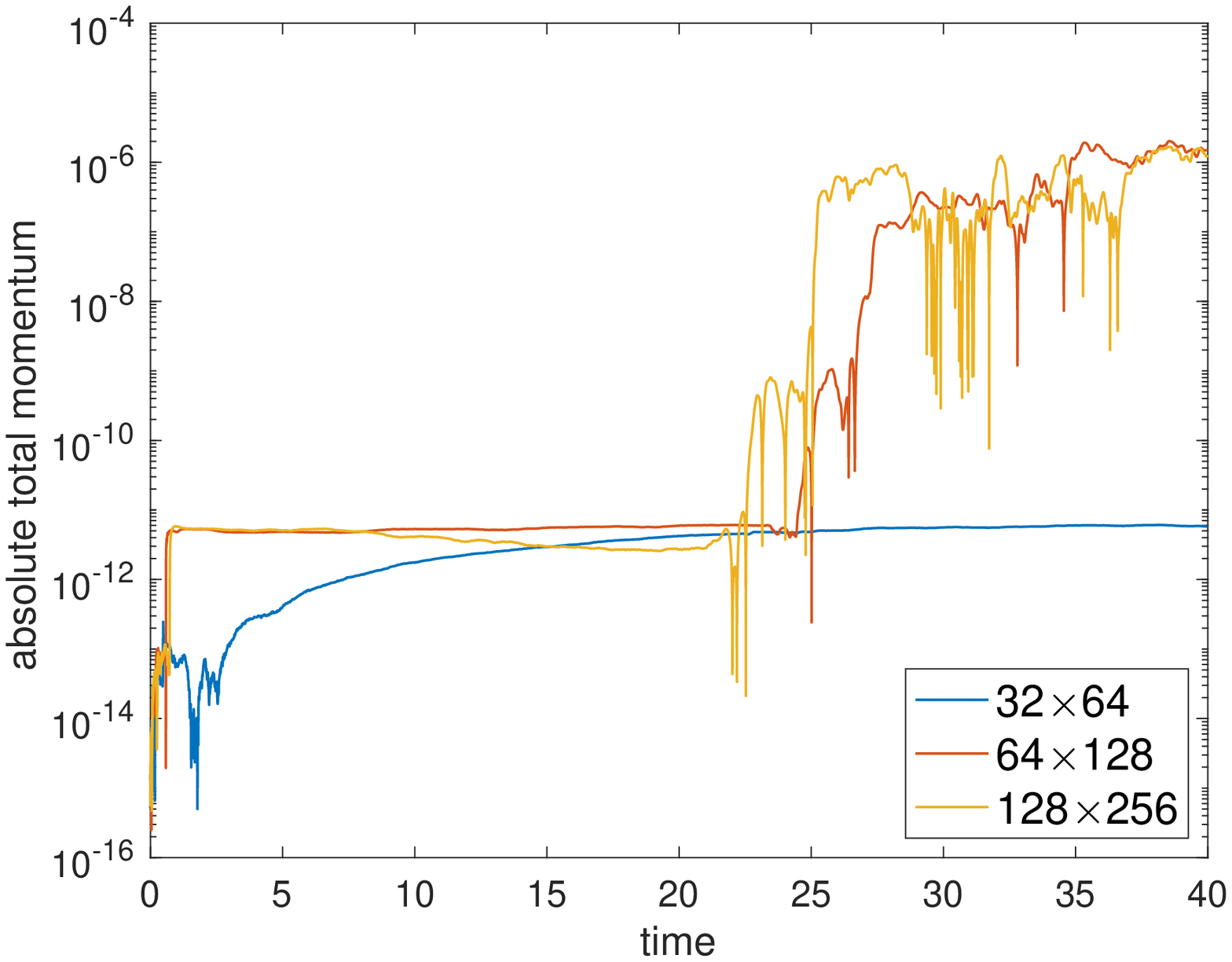}}
		\subfigure[]{\includegraphics[height=40mm]{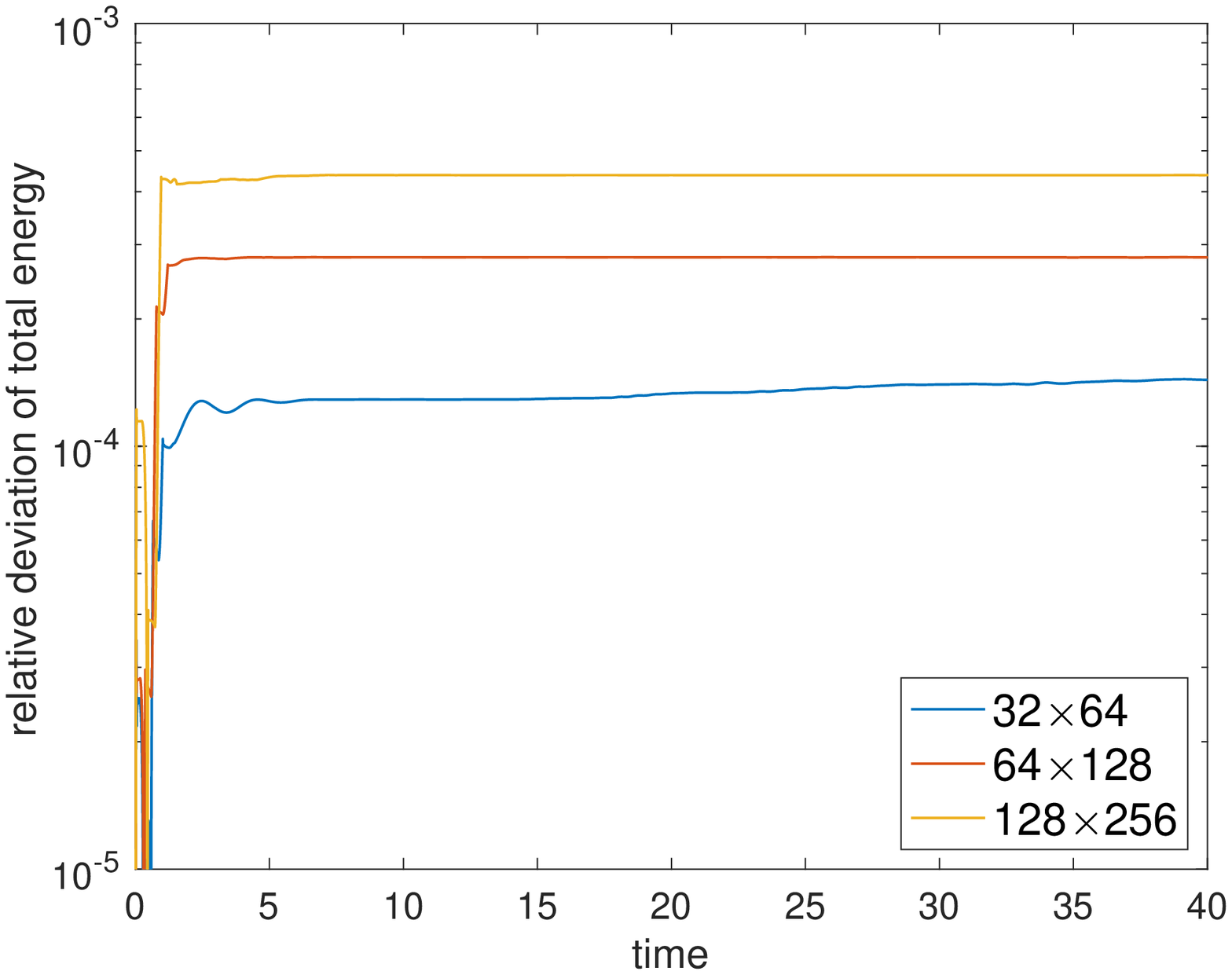}}
	\caption{Example \ref{ex:strong1d}.  The time evolution of relative deviation of total mass (a, d), absolute total momentum (b, e), and relative deviation of total energy (c, f). Conservative method (a, b, c) and non-conservative method (d, e, f).  $\varepsilon=10^{-4}$.}
	\label{fig:strong1d_invar2}
\end{figure}

 \begin{exa}\label{ex:bumpontail} (Bump on tail.) In this example, we simulate the bump-on-tail test with the initial condition
 \begin{equation}
\label{eq:bump1d}
f(x,v,t=0) = \left(1+\alpha  \cos \left(k x\right)\right)\left(n_p\exp\left(-\frac{v^2}{2}\right) +n_b\exp\left(-\frac{(v-u)^2}{2v_{t}}\right) \right),
\end{equation}
where $\alpha=0.04$, $k=0.3$, $n_{p}=\frac{9}{10 \sqrt{2 \pi}}$, $n_{b}=\frac{2}{10 \sqrt{2 \pi}}$, $u=4.5$, $v_{t}=0.5$. 
 \end{exa}
 
In the simulation, we set $\varepsilon=10^{-4}$ for truncation. The weight function $w(v) = \exp(-\frac{v^2}{3})$ is chosen for the conservative method. Note that unlike the previous Landau damping examples, the solution of bump on tail does not have the symmetry. In Figure \ref{fig:bumpontail_elec2}, we report the time evolution of the electric energy as well as the ranks of the solutions. Consistent numerical results are observed. In Figure \ref{fig:bumpontail_invar2}, we plot the time evolution of the relative deviation of the total mass, momentum and energy for both methods. The non-conservative  method is able to conserve the invariants on the scale of the truncation threshold. Note that the exact momentum conservation is not observed as expected. Meanwhile, the conservative method is able to conserve the total mass and momentum up to the machine precision and has smaller conservation errors of total energy compared to the non-conservative method. Note that the conservation error of the total momentum increases after $t=15$ for the conservative method with the coarse mesh size $32\times64$ which is ascribed to the boundary error. In Figure \ref{fig:bumpontail_contour}, we report the contour plots of the solutions by the two methods. The numerical solutions are observed to qualitatively match each other. 
Last, we test the performance of projector $P_{\mathcal{N}}$ with different subspaces $\mathcal{N}$ as discussed in Remark \ref{rem:35}. In particular, we denote by $P_1$, $P_2$ and $P_3$ the orthogonal projectors with the weighted inner product \eqref{eq: inner_prod_d} onto the subspaces $\text{span}\{1_v\}$,  $\text{span}\{1_v,\bv\}$,  $\text{span}\{1_v,\bv,\bv^2\}$, respectively. In Figure \ref{fig:bumpontail_comp}, we plot the time evolution of the electric energy, ranks, total mass, total momentum, and total energy of the solutions from the conservative method with  $P_1$, $P_2$ and $P_3$. The three methods generate consistent results for the time evolution of the electric energy, and the ranks are comparable. By construction all three methods can conserve the total mass, and the methods with $P_2$ and $P_3$ can conserve the total momentum. Further, the method with $P_3$ does the best job in conserving the total energy as $P_3$ preserves the kinetic energy for truncation.

 \begin{figure}[h!]
	\centering
	\subfigure[]{\includegraphics[height=50mm]{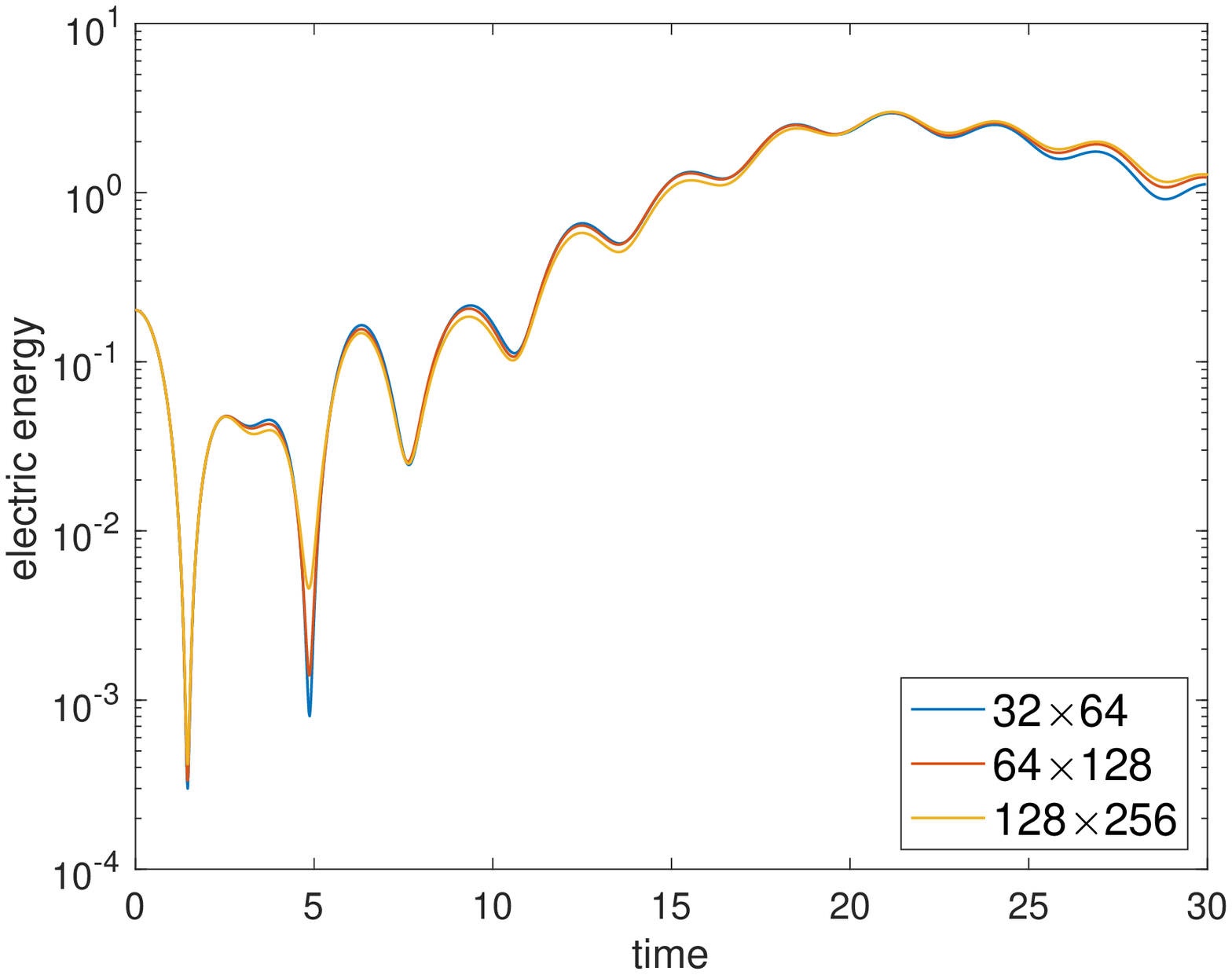}}
		\subfigure[]{\includegraphics[height=50mm]{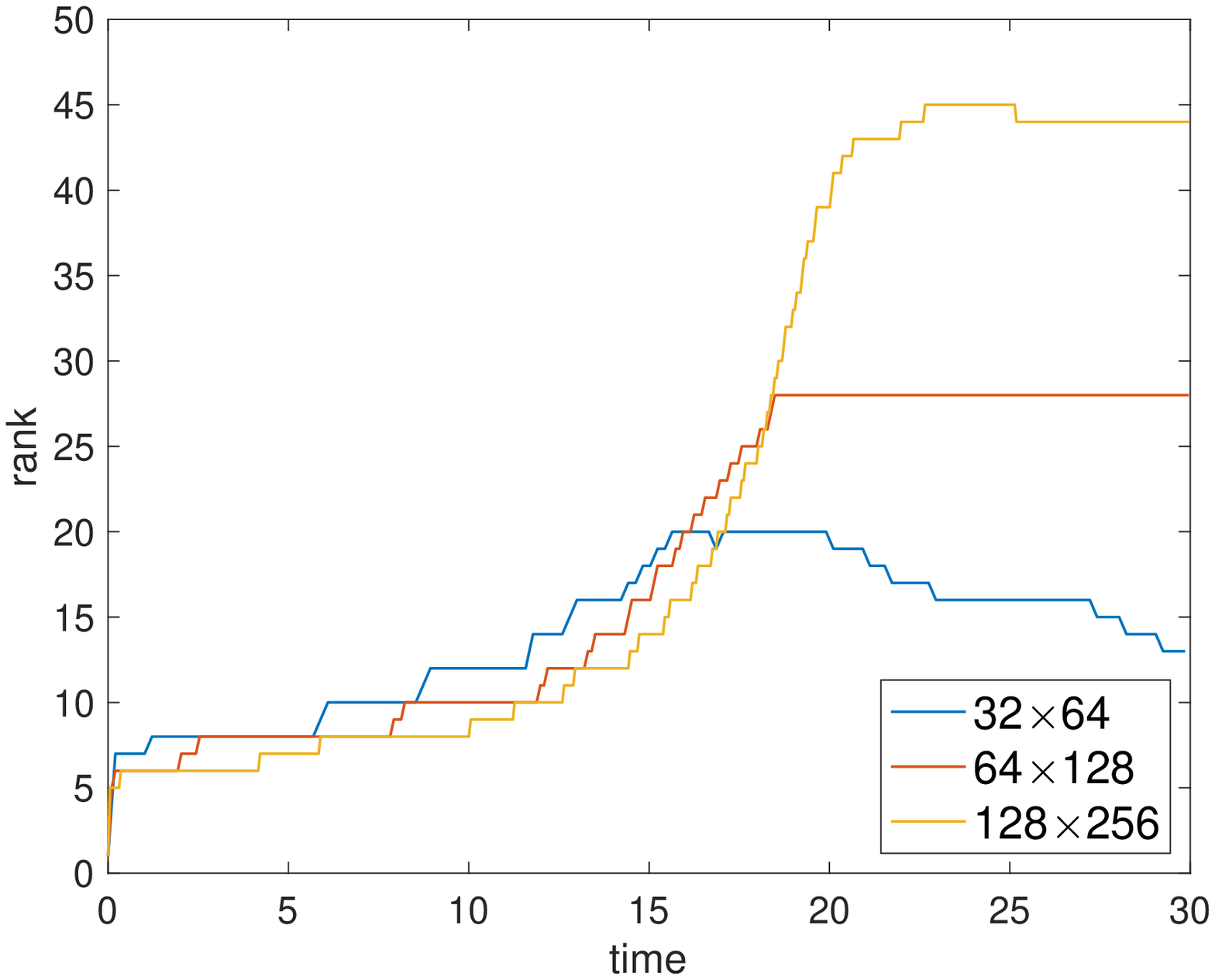}}
			\subfigure[]{\includegraphics[height=50mm]{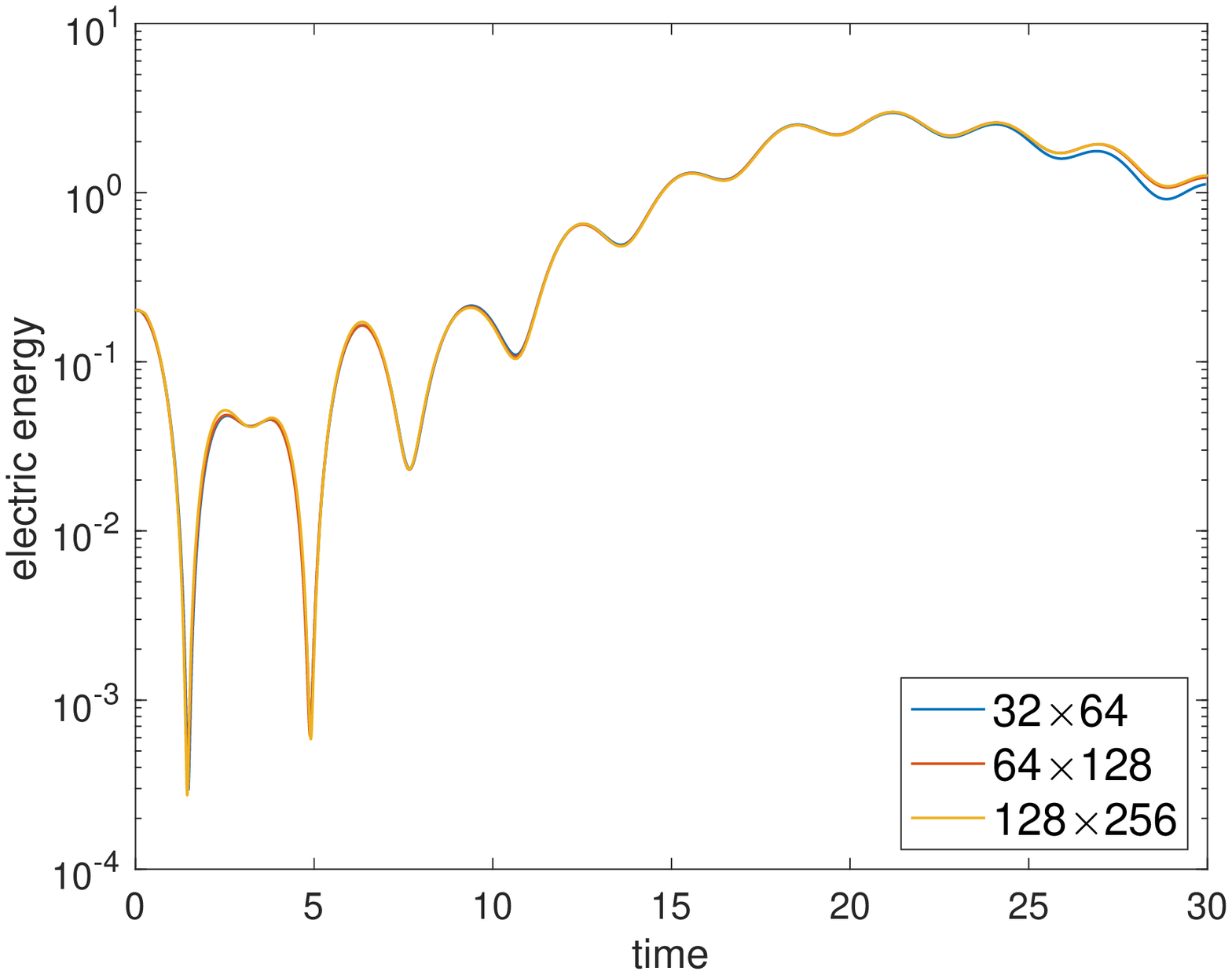}}
		\subfigure[]{\includegraphics[height=50mm]{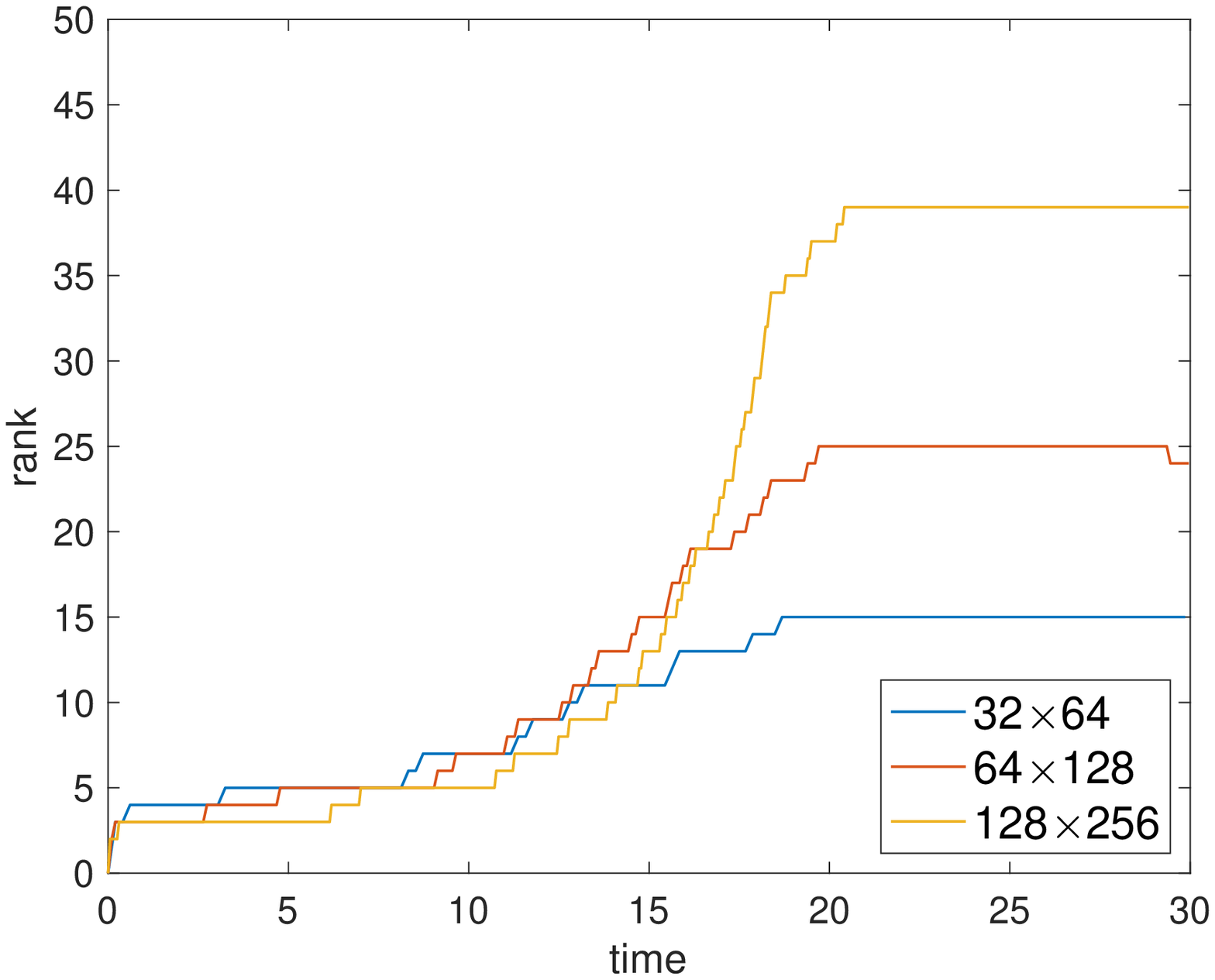}}
	\caption{Example \ref{ex:bumpontail}.  The time evolution of the electric energy (a, c) and the ranks of the numerical solutions (b, d). Conservative method (a, b) and non-conservative method (c, d).  $\varepsilon=10^{-4}$.}
	\label{fig:bumpontail_elec2}
\end{figure}

\begin{figure}[h!]
	\centering
	\subfigure[]{\includegraphics[height=40mm]{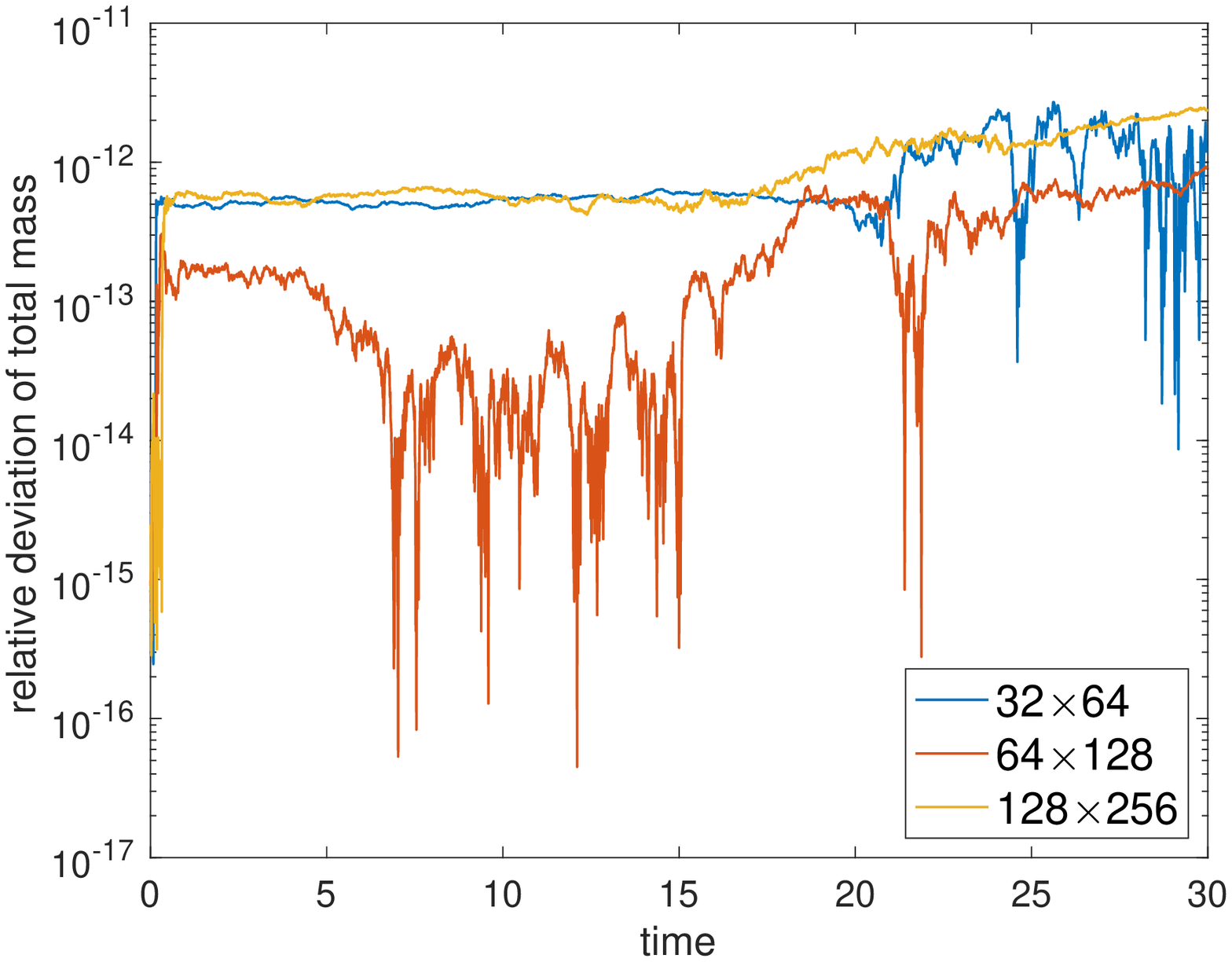}}
		\subfigure[]{\includegraphics[height=40mm]{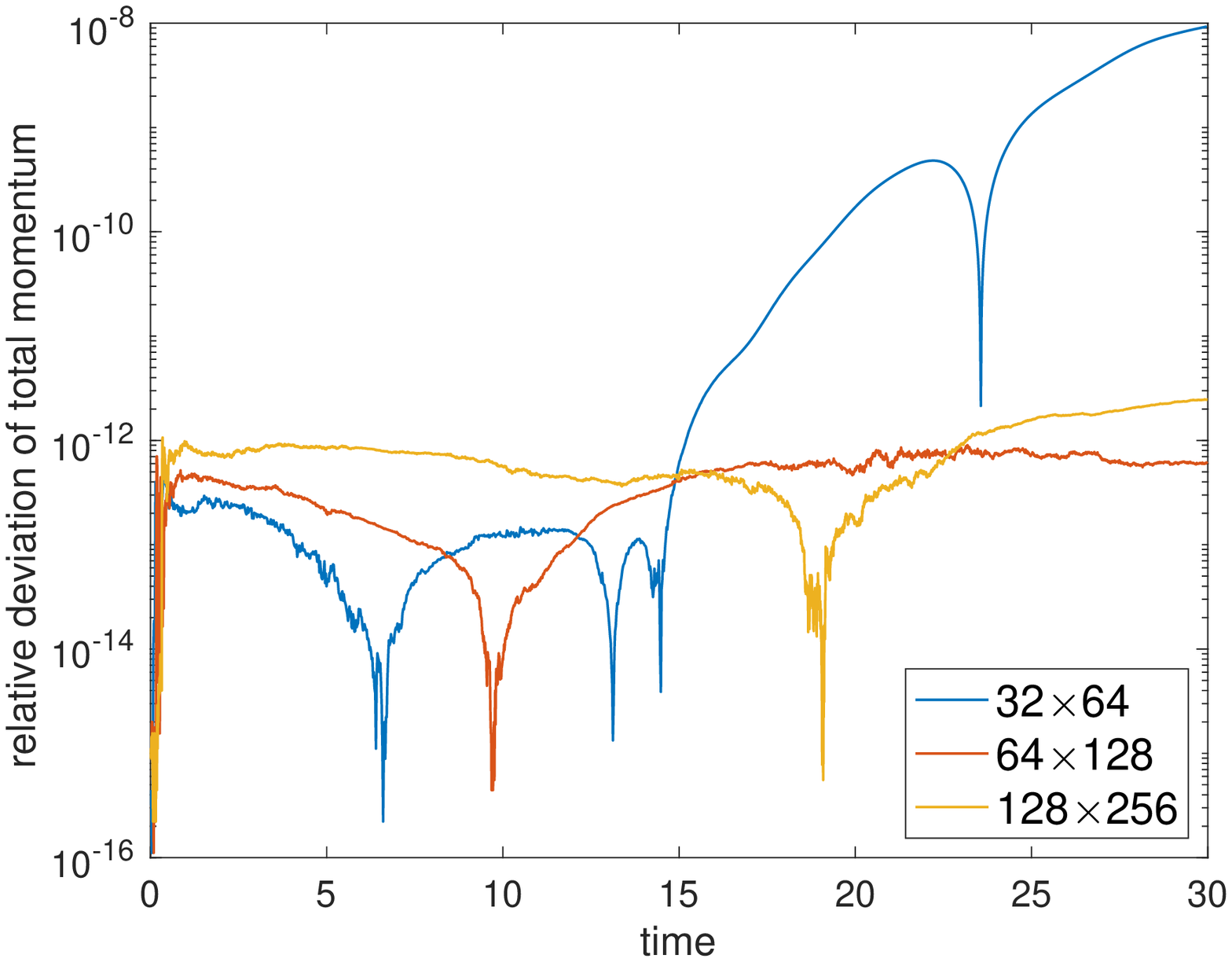}}
		\subfigure[]{\includegraphics[height=40mm]{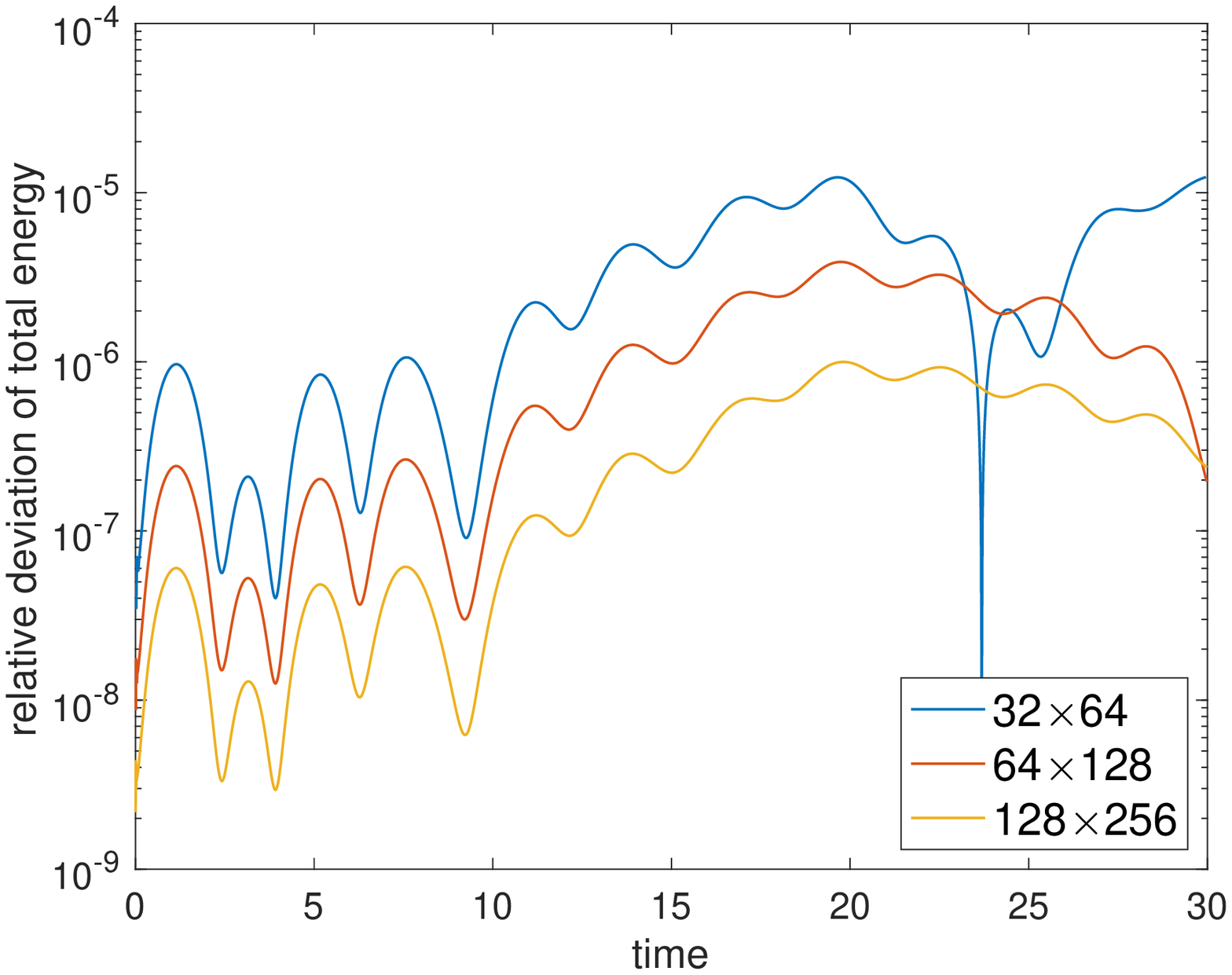}}
			\subfigure[]{\includegraphics[height=40mm]{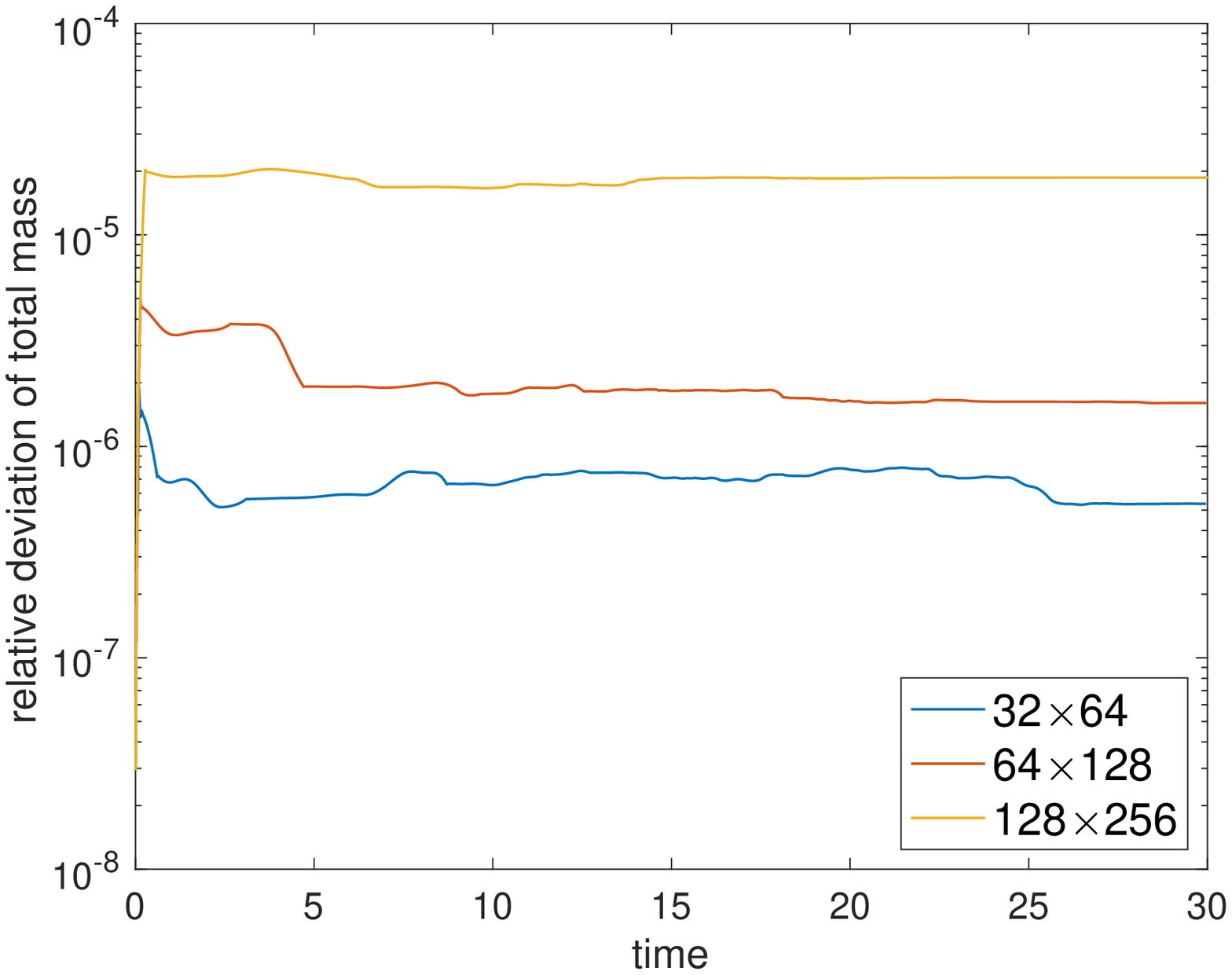}}
		\subfigure[]{\includegraphics[height=40mm]{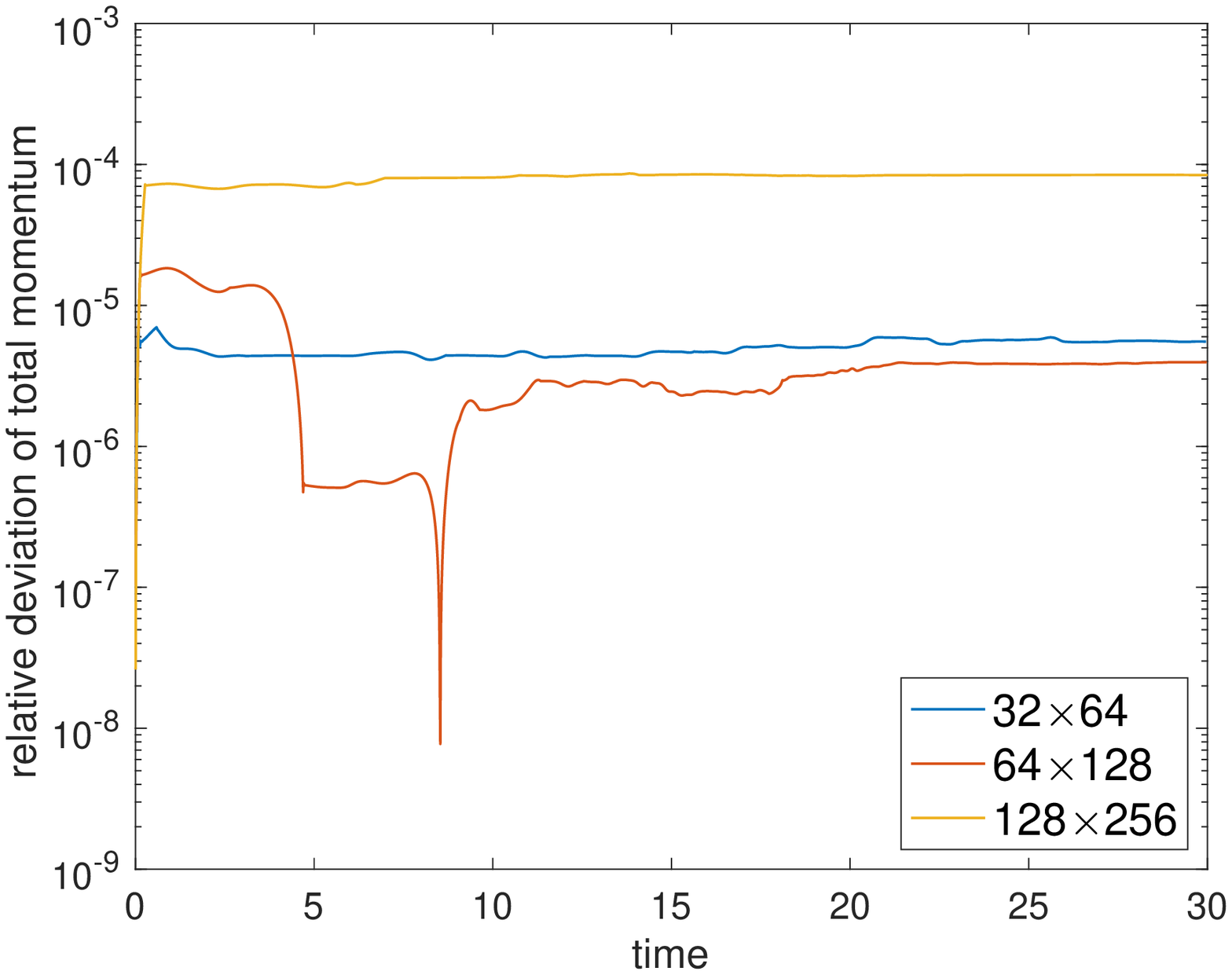}}
		\subfigure[]{\includegraphics[height=40mm]{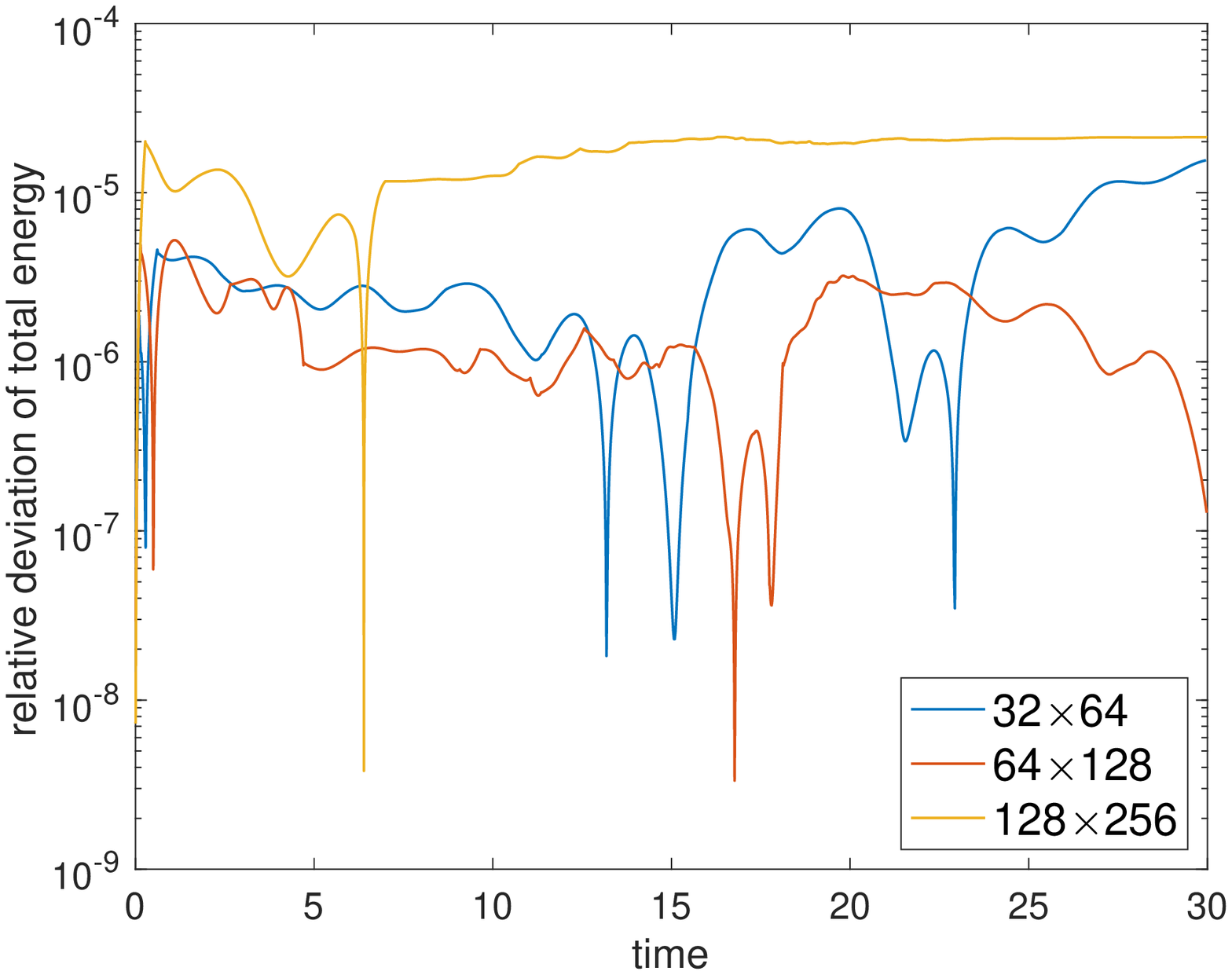}}
	\caption{Example \ref{ex:bumpontail}.  The time evolution of relative deviation of total mass (a, d), relative deviation of total momentum (b, e), and relative deviation of total energy (c, f). Conservative method (a, b, c) and non-conservative method (d, e, f).  $\varepsilon=10^{-4}$.}
	\label{fig:bumpontail_invar2}
\end{figure}

 \begin{figure}[h!]
	\centering
		\subfigure[]{\includegraphics[height=50mm]{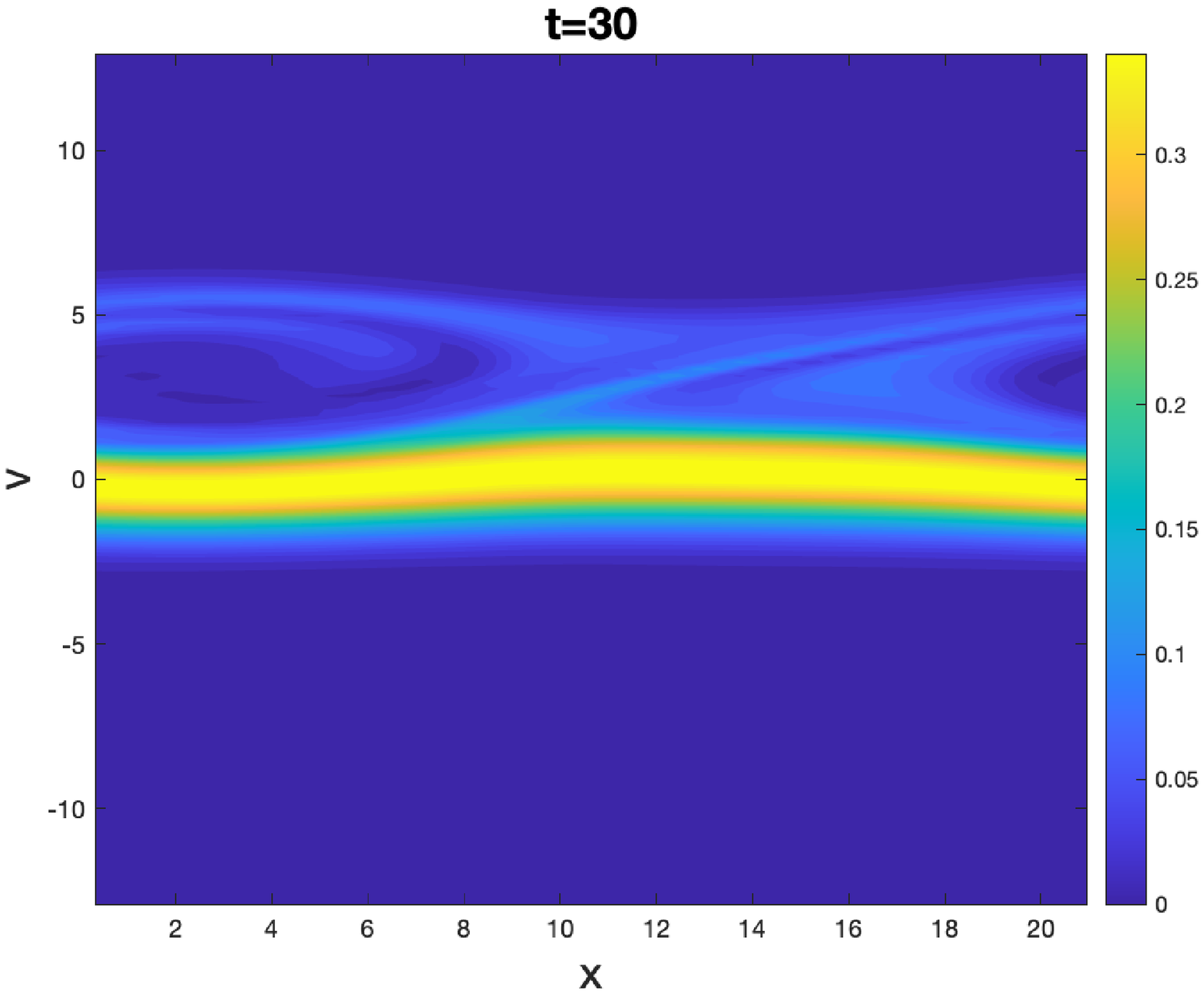}}
		\subfigure[]{\includegraphics[height=50mm]{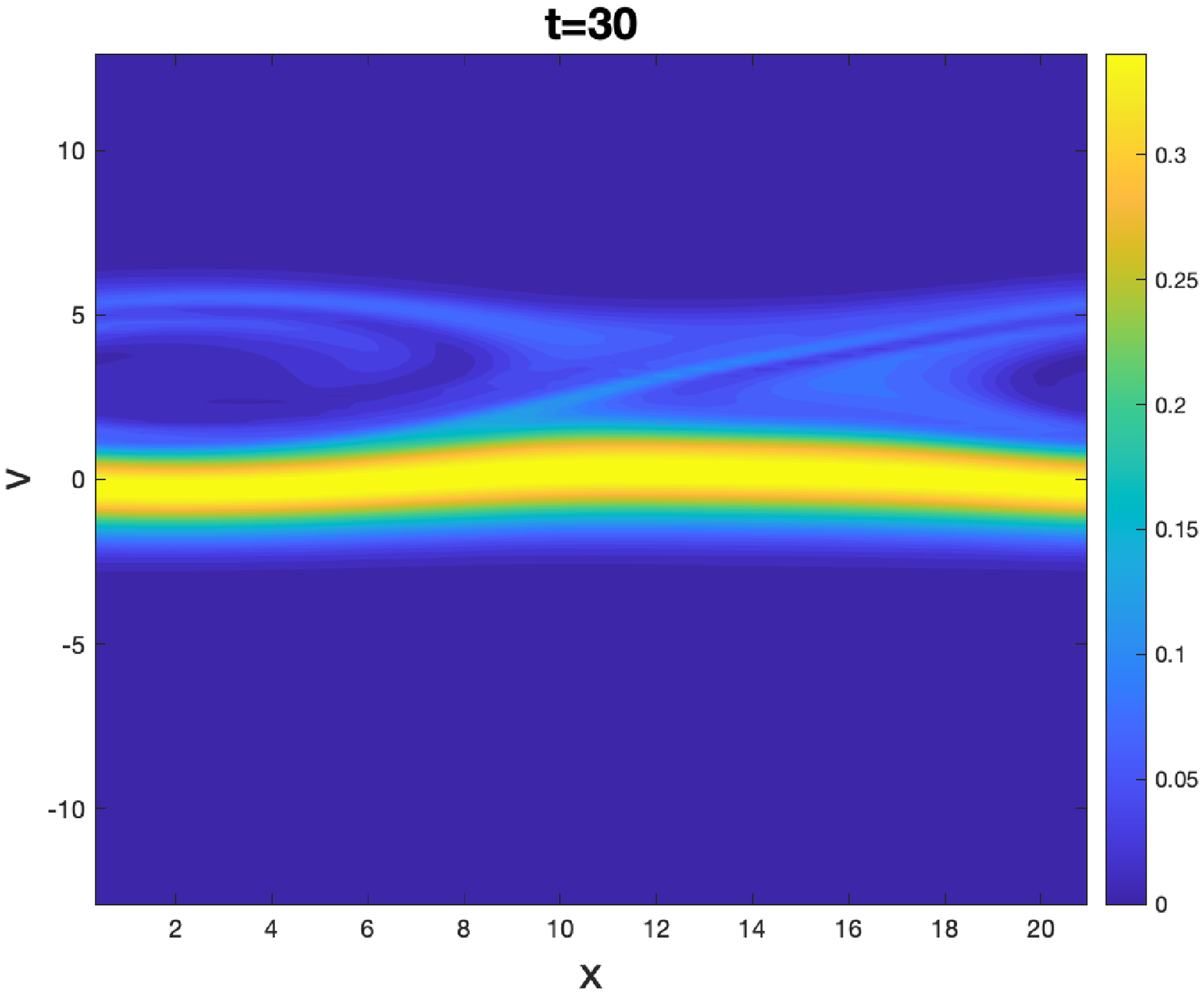}}
	\subfigure[]{\includegraphics[height=50mm]{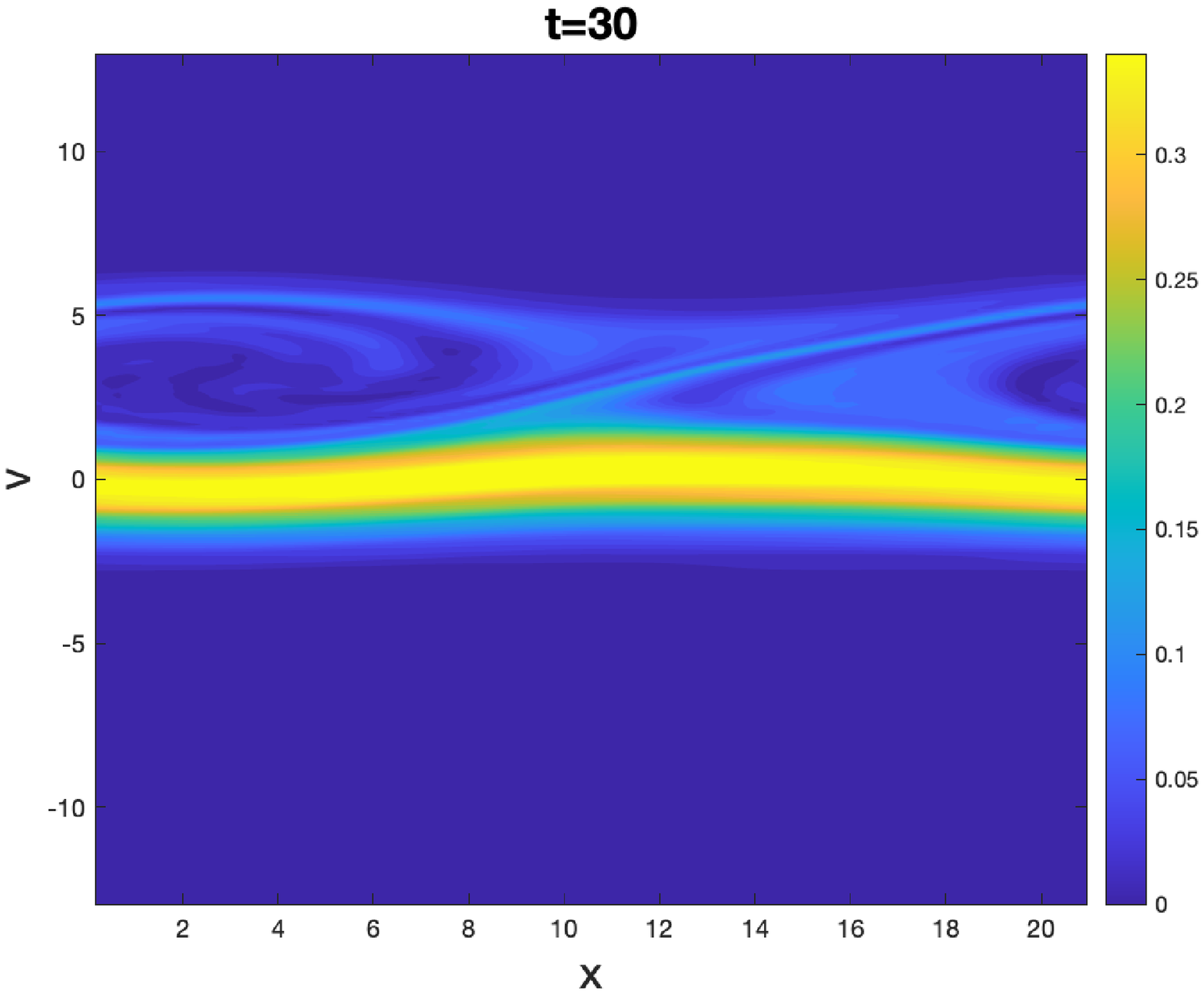}}
		\subfigure[]{\includegraphics[height=50mm]{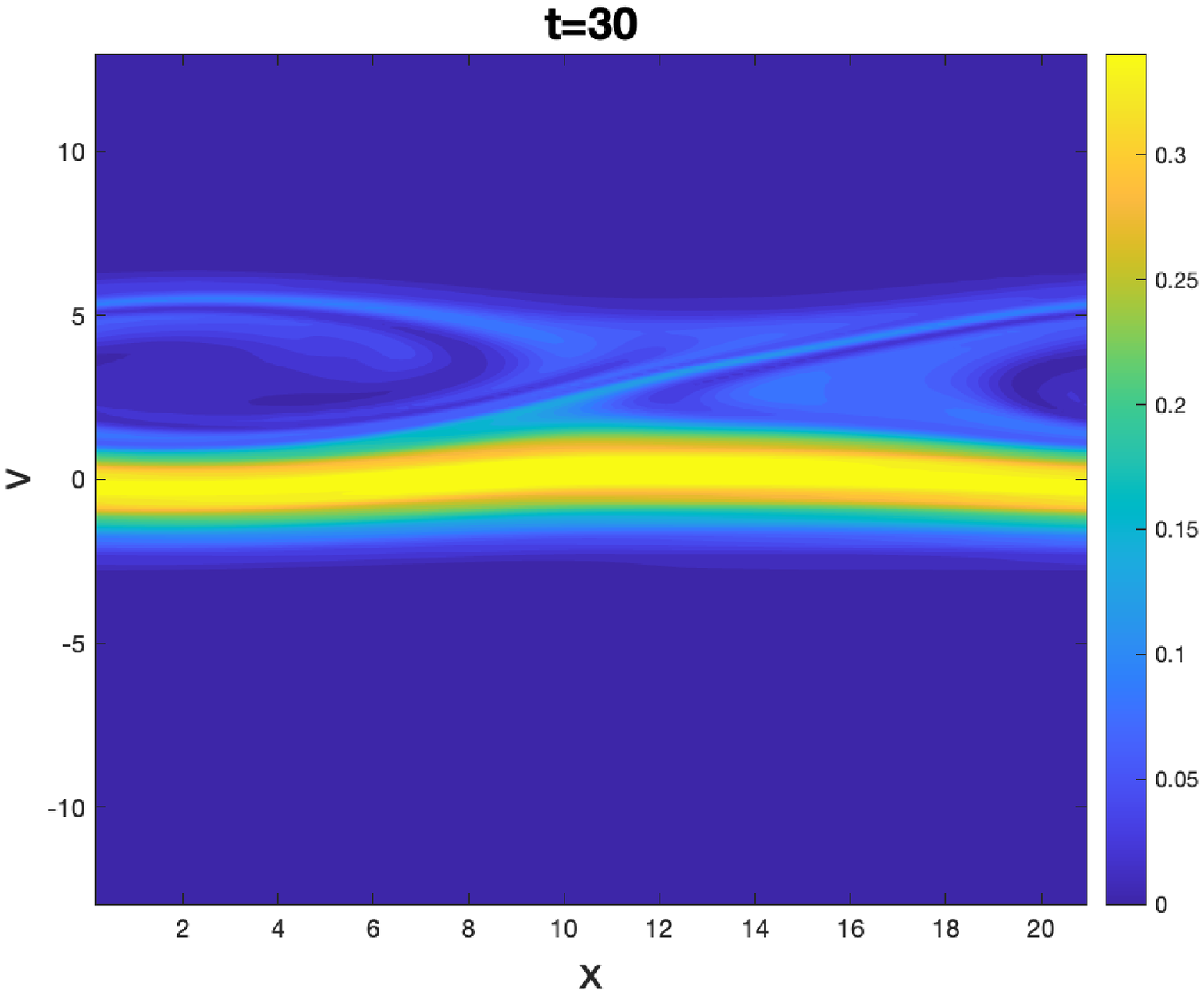}}
			\caption{Example \ref{ex:bumpontail}. Contour plots of the solutions at $t=30$. Conservative method with mesh $N_x\times N_v=64\times128$ (a). Non-conservative method with mesh $N_x\times N_v=64\times128$ (b). Conservative method with mesh $N_x\times N_v=128\times256$ (c). Non-conservative method with mesh $N_x\times N_v=128\times256$ (d). $\varepsilon=10^{-4}$.}

	\label{fig:bumpontail_contour}
\end{figure}

\begin{figure}[h!]
	\centering
	\subfigure[]{\includegraphics[height=40mm]{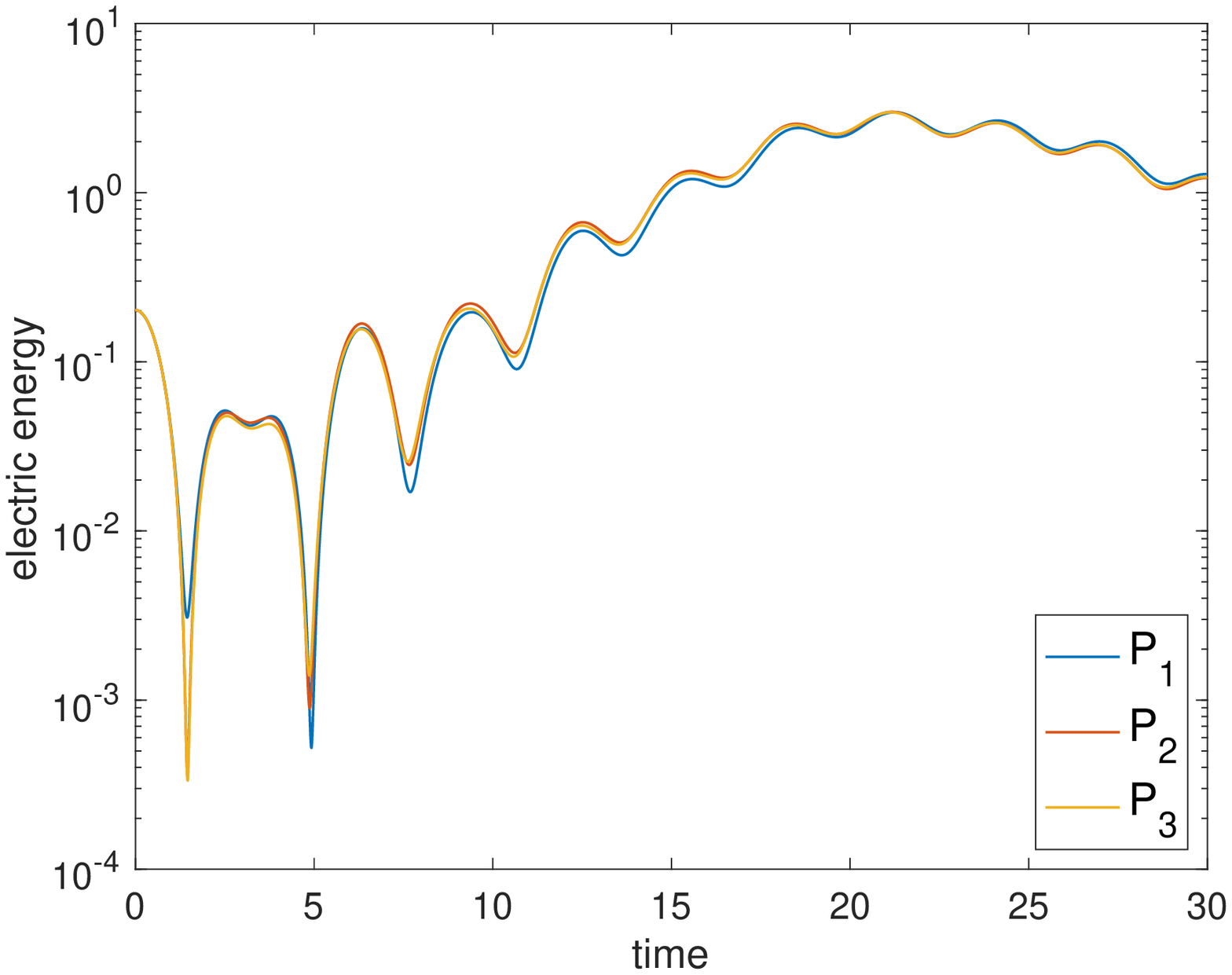}}
		\subfigure[]{\includegraphics[height=40mm]{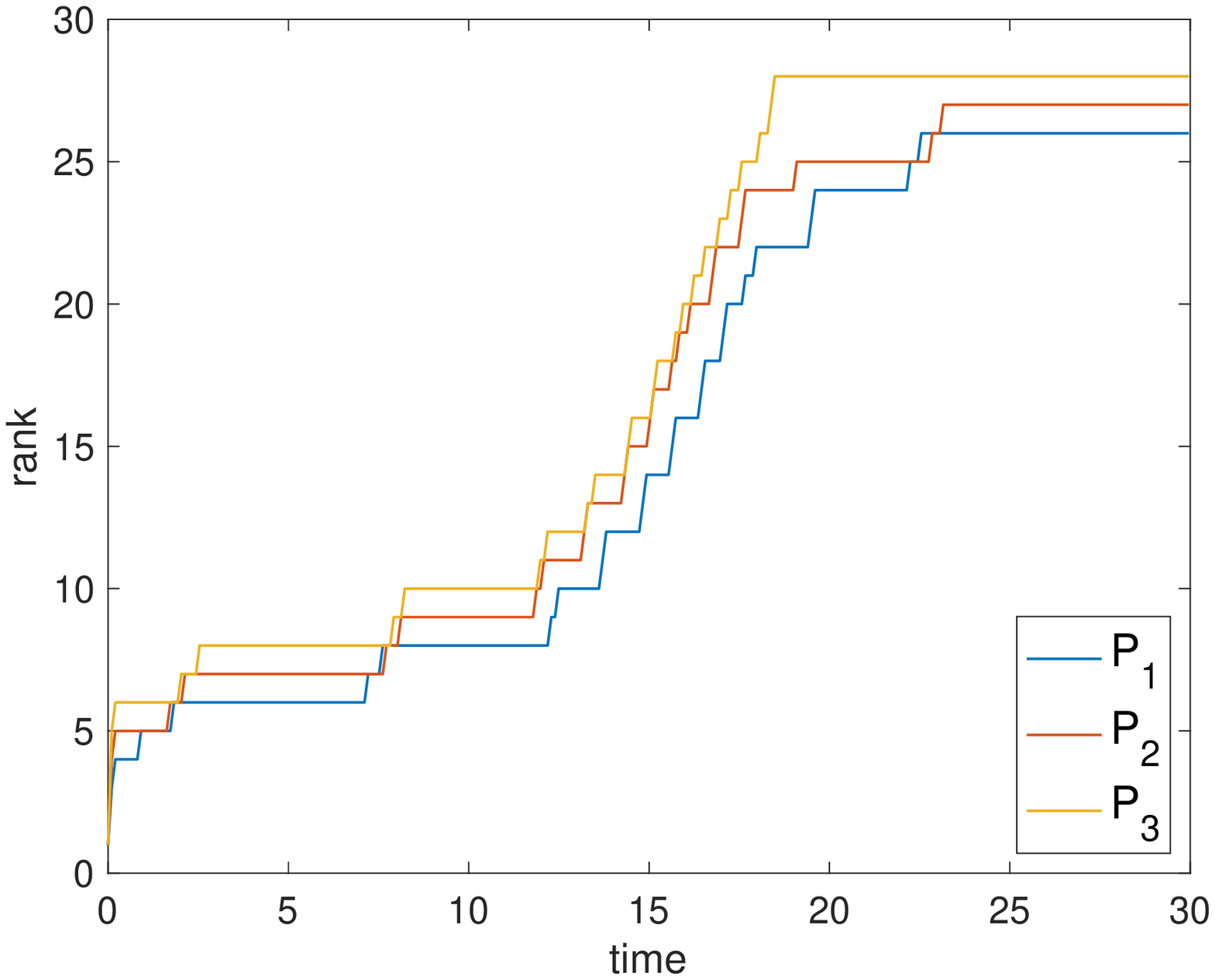}}
		\subfigure[]{\includegraphics[height=40mm]{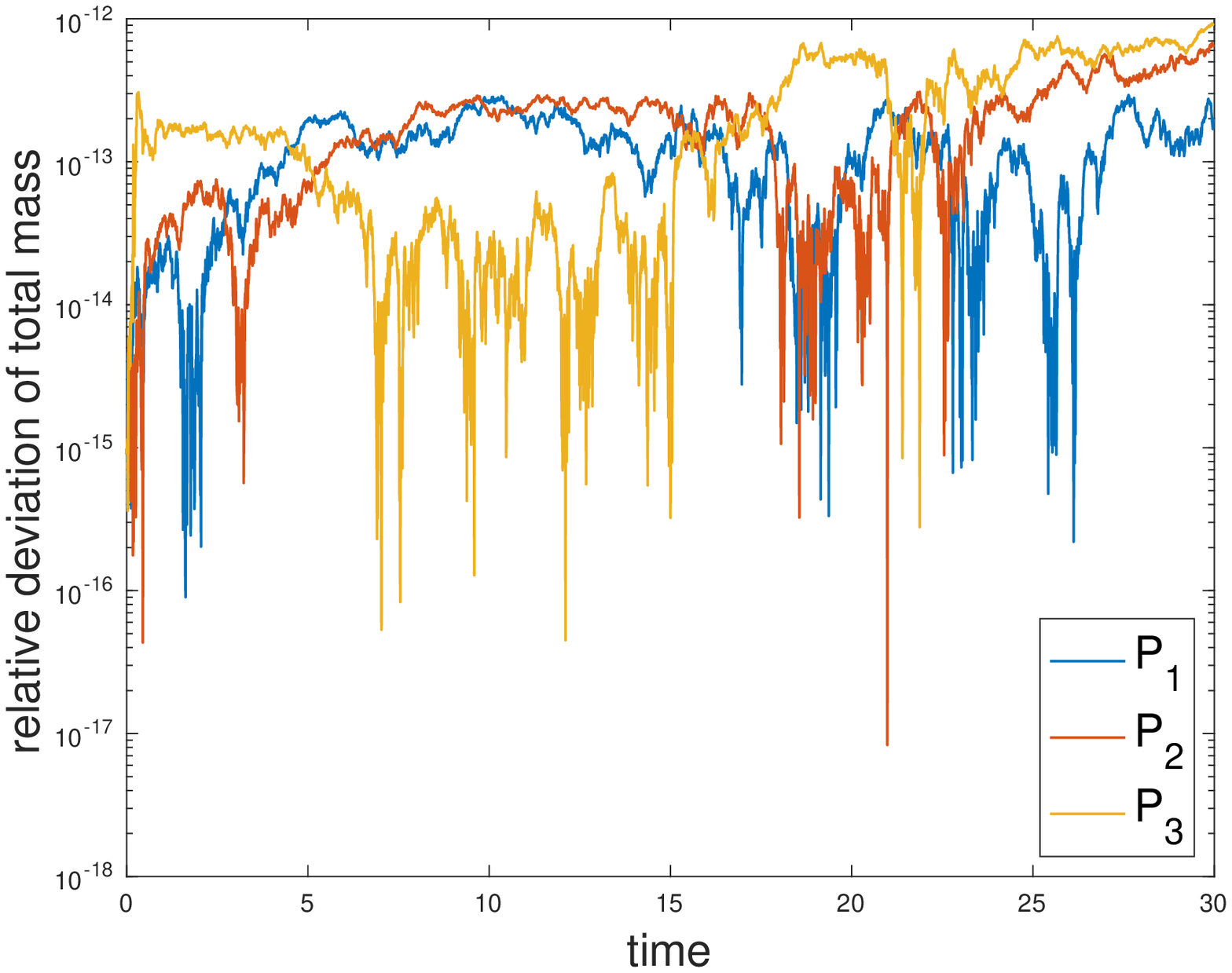}}
			\subfigure[]{\includegraphics[height=40mm]{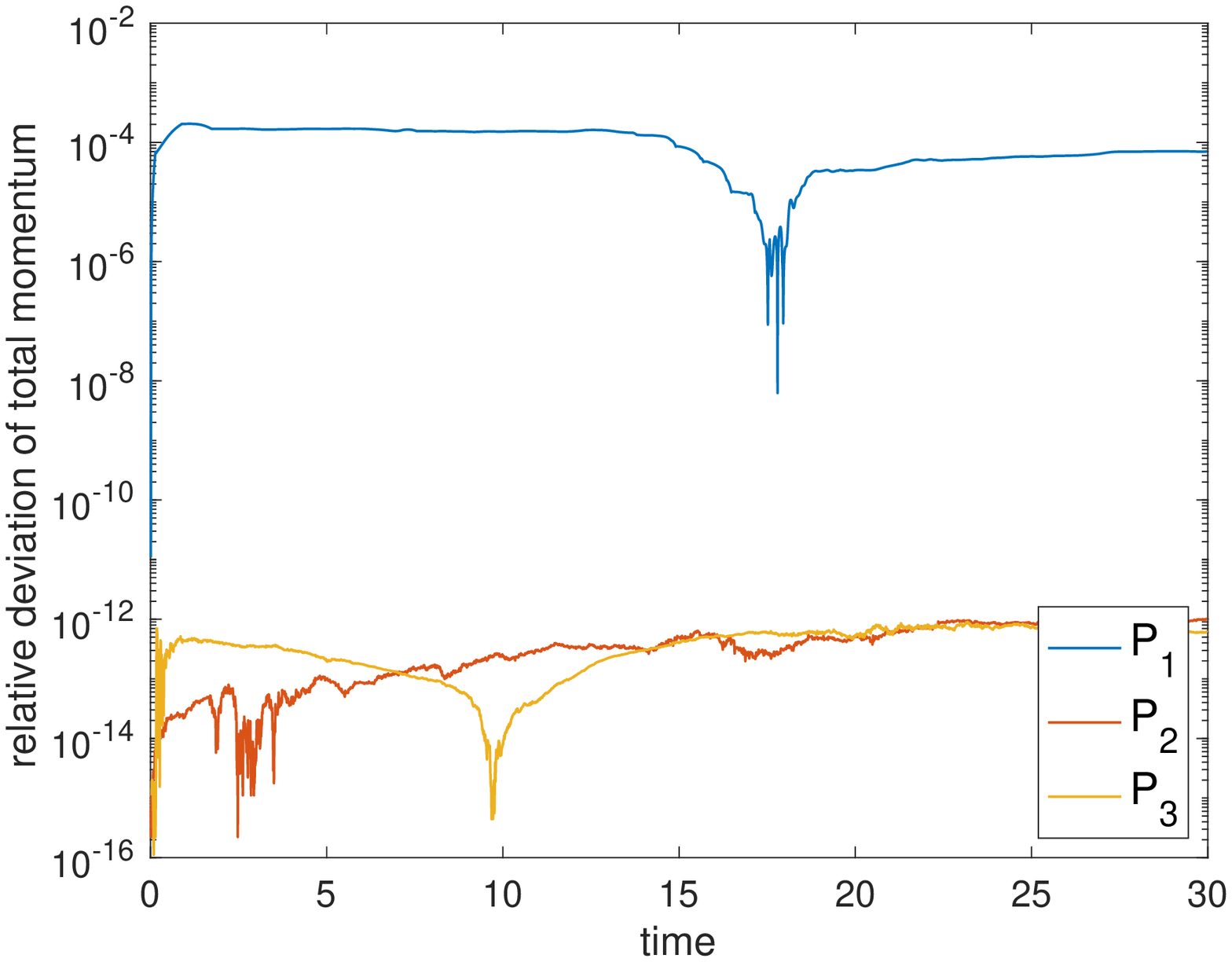}}
		\subfigure[]{\includegraphics[height=40mm]{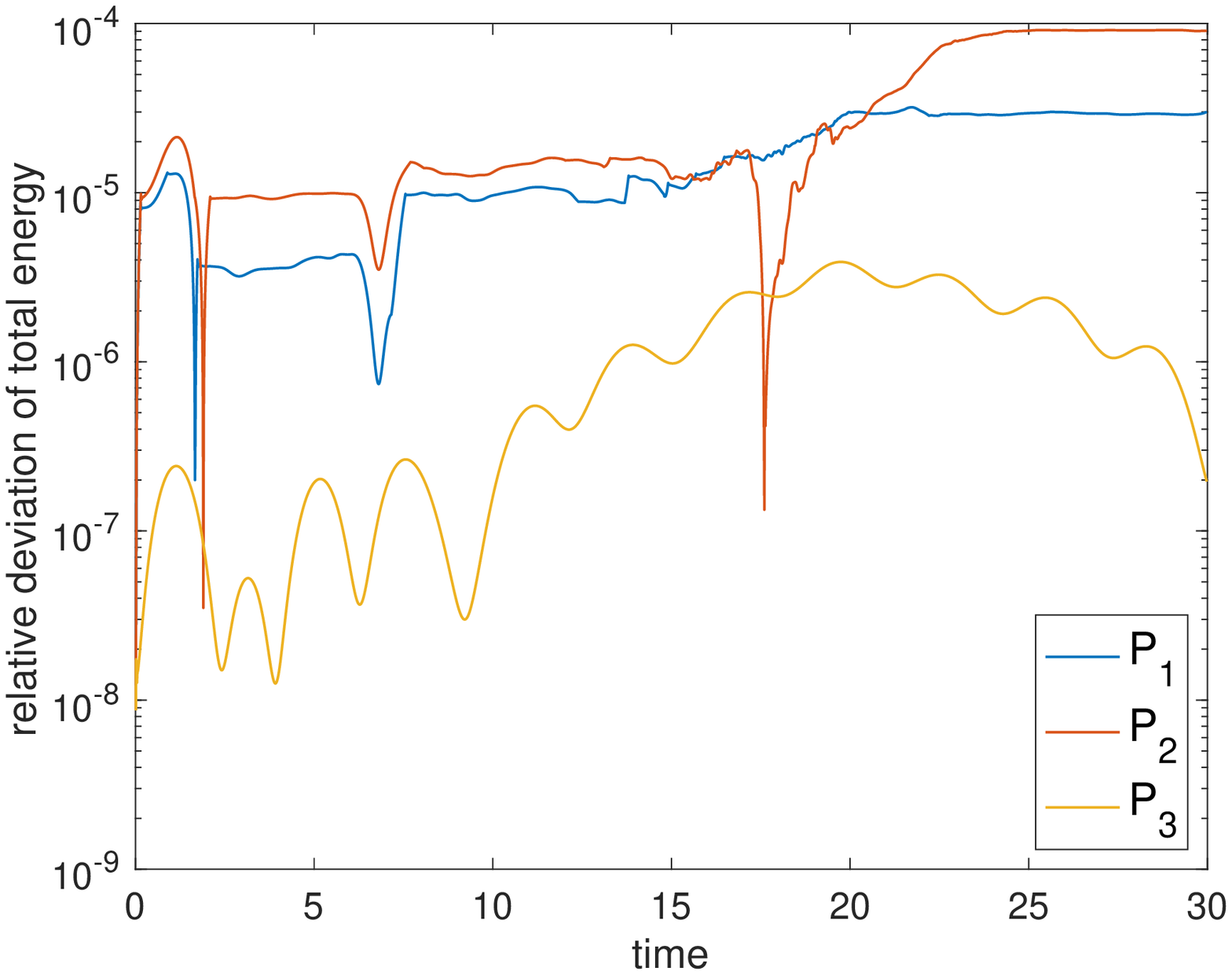}}
	\caption{Example \ref{ex:bumpontail}. Comparison of projections $P_1,\, P_2,\,P_3$ of the proposed conservative low rank method. The time evolution of  electric energy (a), ranks of the numerical solutions (b), relative deviation of total mass (c), relative deviation of total momentum (e), and relative deviation of total energy (f). $N_x\times N_v=64\times128$. $\varepsilon=10^{-4}$.}
	\label{fig:bumpontail_comp}
\end{figure}

 \subsubsection{2D2V Vlasov-Poisson system} 
 \begin{exa} \label{ex:weak2d} (Weak Landau damping.) We consider the 2D2V weak Landau damping, the dynamics of which is similar to the 1D1V case. The initial  
condition is 
\begin{equation}
	\label{eq:weak}
	f(\bx,\bv,t=0) =\frac{1}{(2 \pi)^{d / 2}} \left(1+\alpha \sum_{m=1}^{d} \cos \left(k x_{m}\right)\right)\exp\left(-\frac{|\bv|^2}{2}\right),
\end{equation}
where $d=2$, $\alpha=0.01$, and $k=0.5$. 
 \end{exa}
 
We set the computation domain as $[0,L_x]^2\times[-L_v,L_v]^2$, where $L_x=\frac{2\pi}{k}$ and $L_v=6$, and the truncation threshold $\varepsilon=10^{-5}$. We simulate the problem with both conservative and non-conservative methods, and the solutions are represented in the fourth order HT format. In Figures \ref{fig:weak2d_elec_con}-\ref{fig:weak2d_elec_non}, we report the time evolution of the electric energy, hierarchical ranks of the numerical solution,  relative deviation of total mass and energy together with absolute total momentum $J_1$ and $J_2$. It is observed that both methods are able to predict the damping rate of the electric energy. Furthermore, the conservative method is able to conserve the total mass and momentum $J_1$ and $J_2$ up to the machine precision and enjoys better total energy conservation compared to the non-conservative counterpart. On the other hand, the hierarchical ranks of the solution from the conservative method, especially $r_1$ and $r_2$, are larger than that from the non-conservative method. This is because ${\bf f}_1$ is constructed without compression to guarantee the local conservation (in fact ${\bf f}_1$ is truncated with threshold $10^{-15}$ in the simulation), and then the solution tensor is not compressed in the $\bx$ direction.

\begin{figure}[h!]
	\centering
	\subfigure[]{\includegraphics[height=40mm]{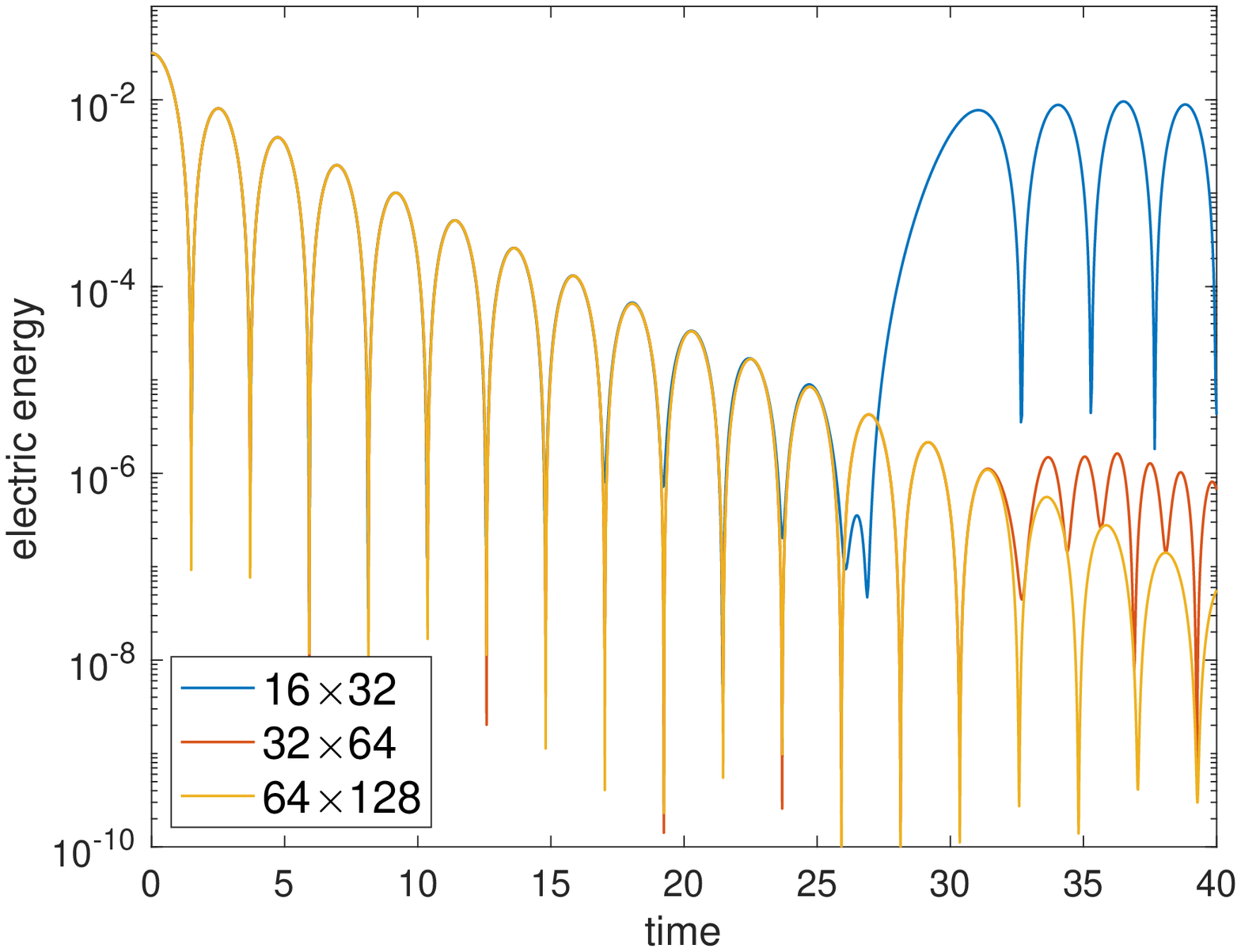}}
		\subfigure[]{\includegraphics[height=40mm]{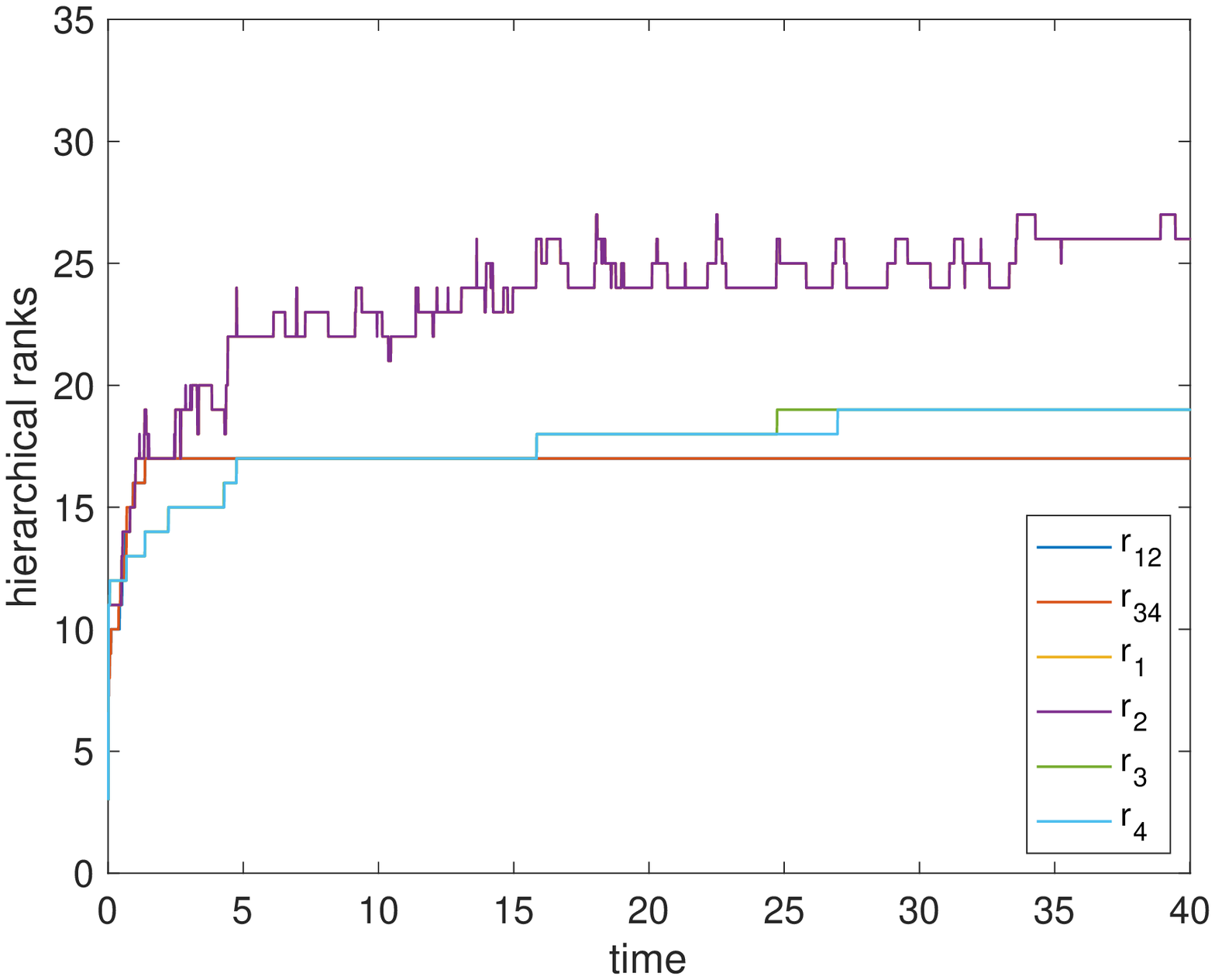}}
		\subfigure[]{\includegraphics[height=40mm]{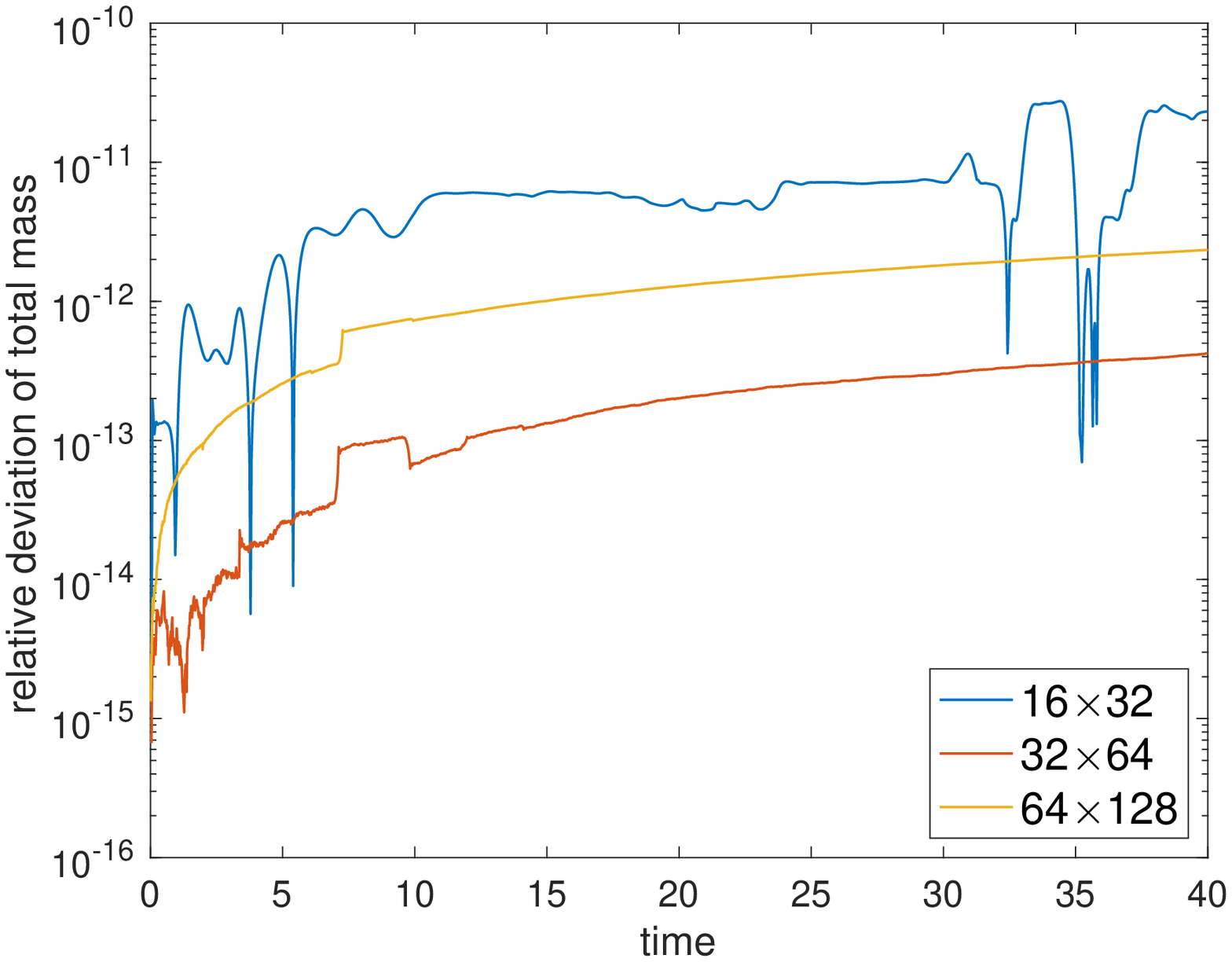}}
		\subfigure[]{\includegraphics[height=40mm]{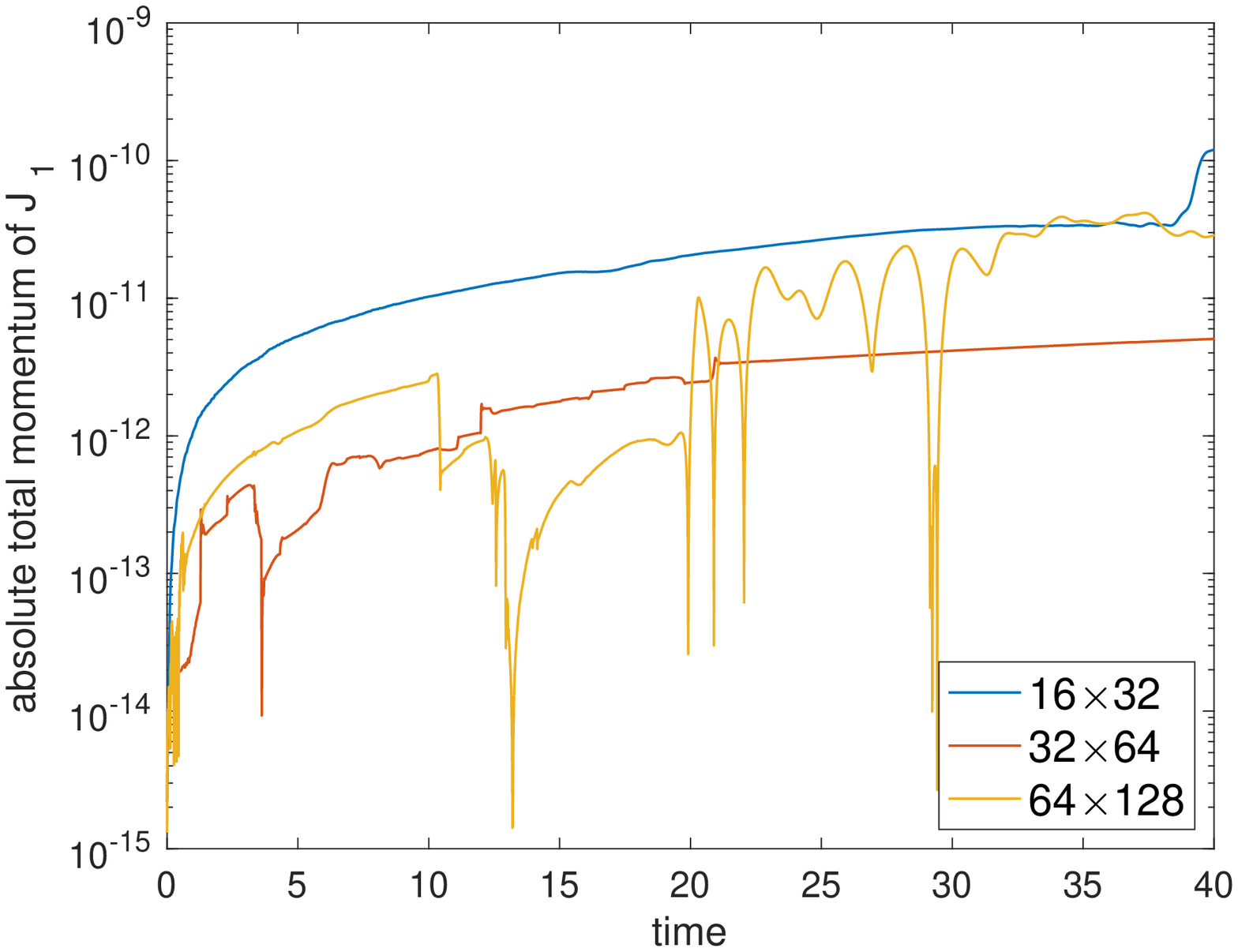}}
			\subfigure[]{\includegraphics[height=40mm]{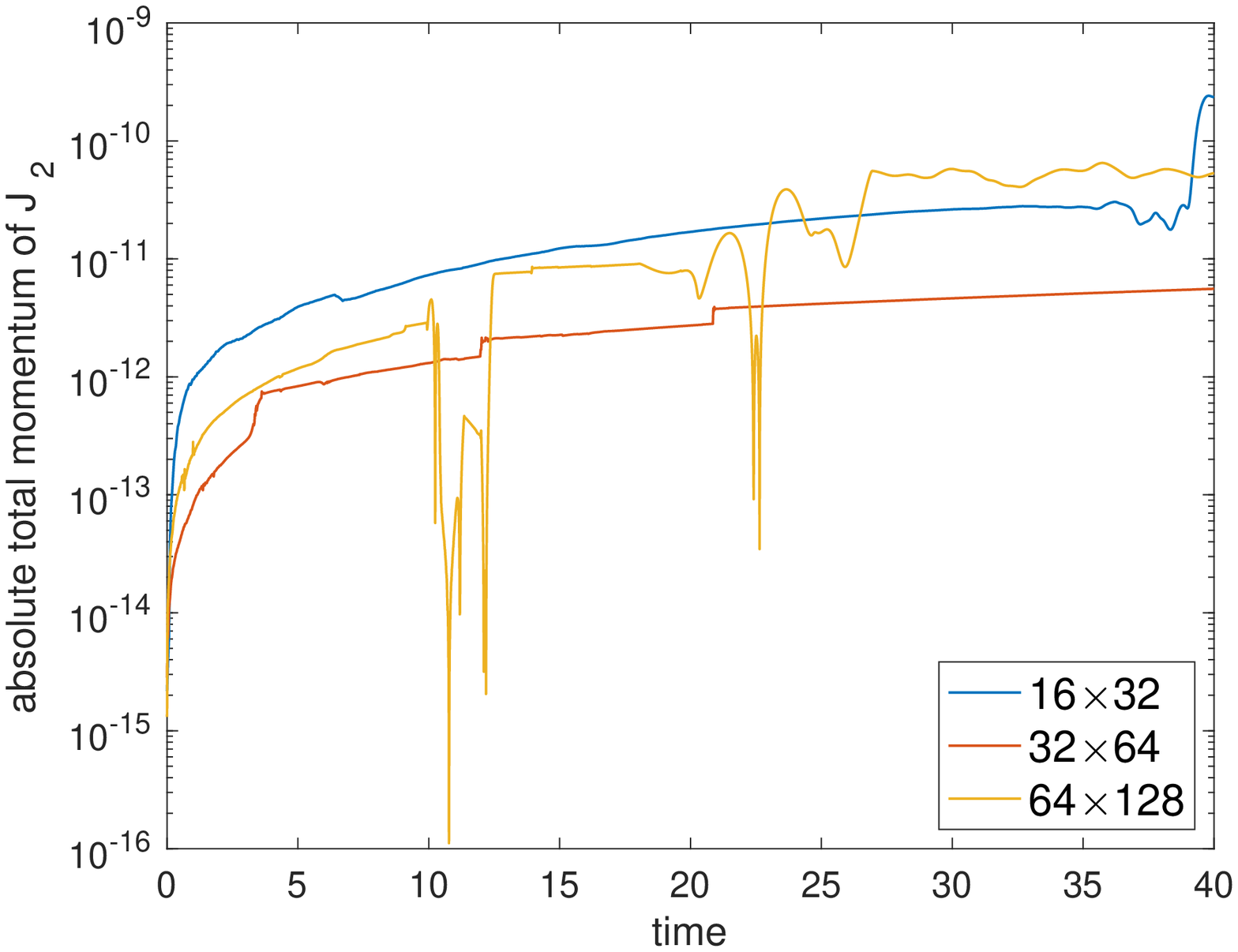}}
		\subfigure[]{\includegraphics[height=40mm]{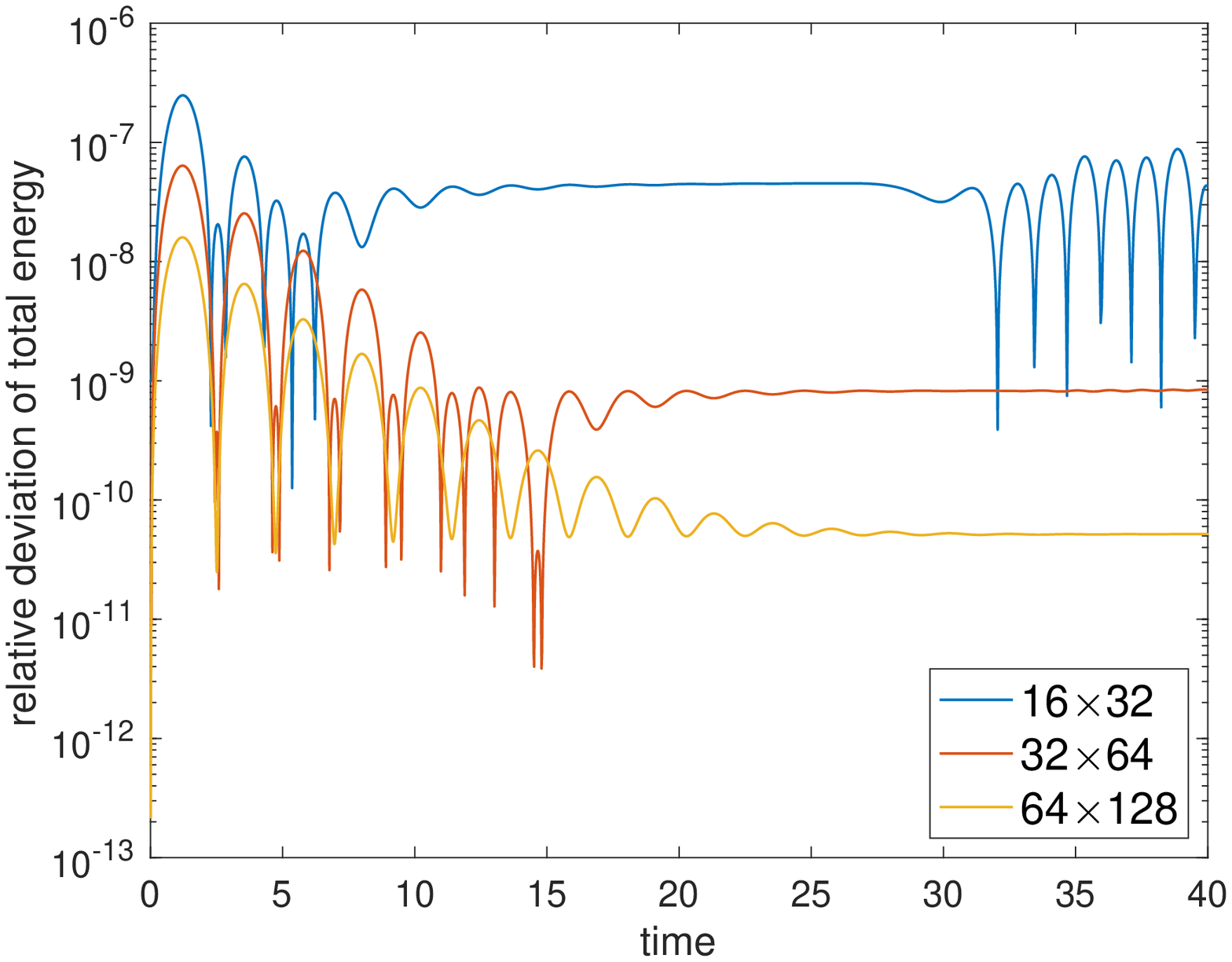}}
	\caption{Example \ref{ex:weak2d}. Conservative low rank method. The time evolution of  electric energy (a), hierarchical ranks of the xnumerical solution of mesh size $N_x\times N_v=64\times128$ (b), relative deviation of total mass (c), absolute total momentum $J_1$ (d), absolute total momentum $J_2$ (e), and relative deviation of total energy (f). $\varepsilon=10^{-5}$. In (b), $r_{12}$ and $r_{34}$ are close, $r_1$ and $r_2$ are close, and $r_3$ and $r_4$ are close.}
	\label{fig:weak2d_elec_con}
\end{figure}

\begin{figure}[h!]
	\centering
	\subfigure[]{\includegraphics[height=40mm]{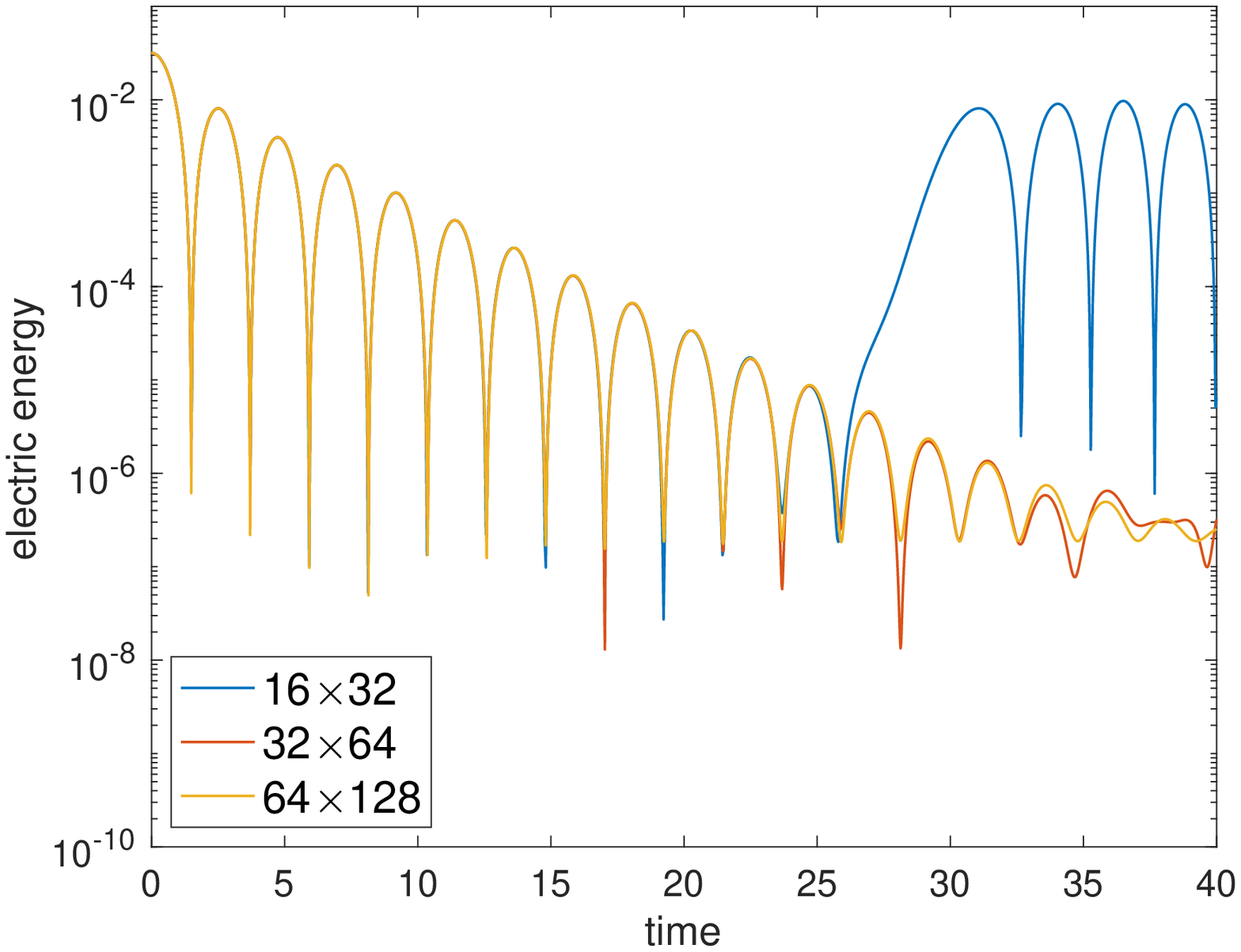}}
		\subfigure[]{\includegraphics[height=40mm]{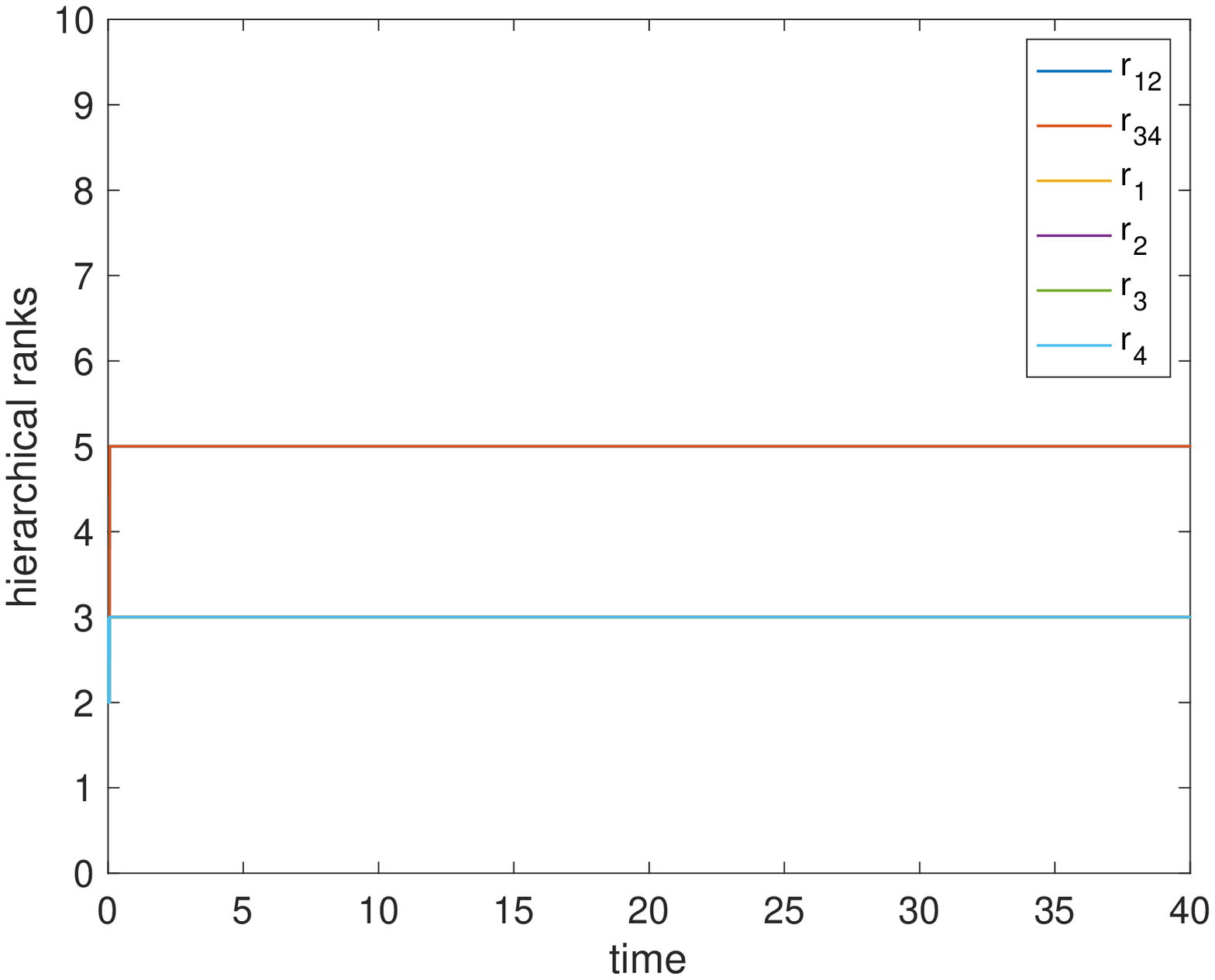}}
		\subfigure[]{\includegraphics[height=40mm]{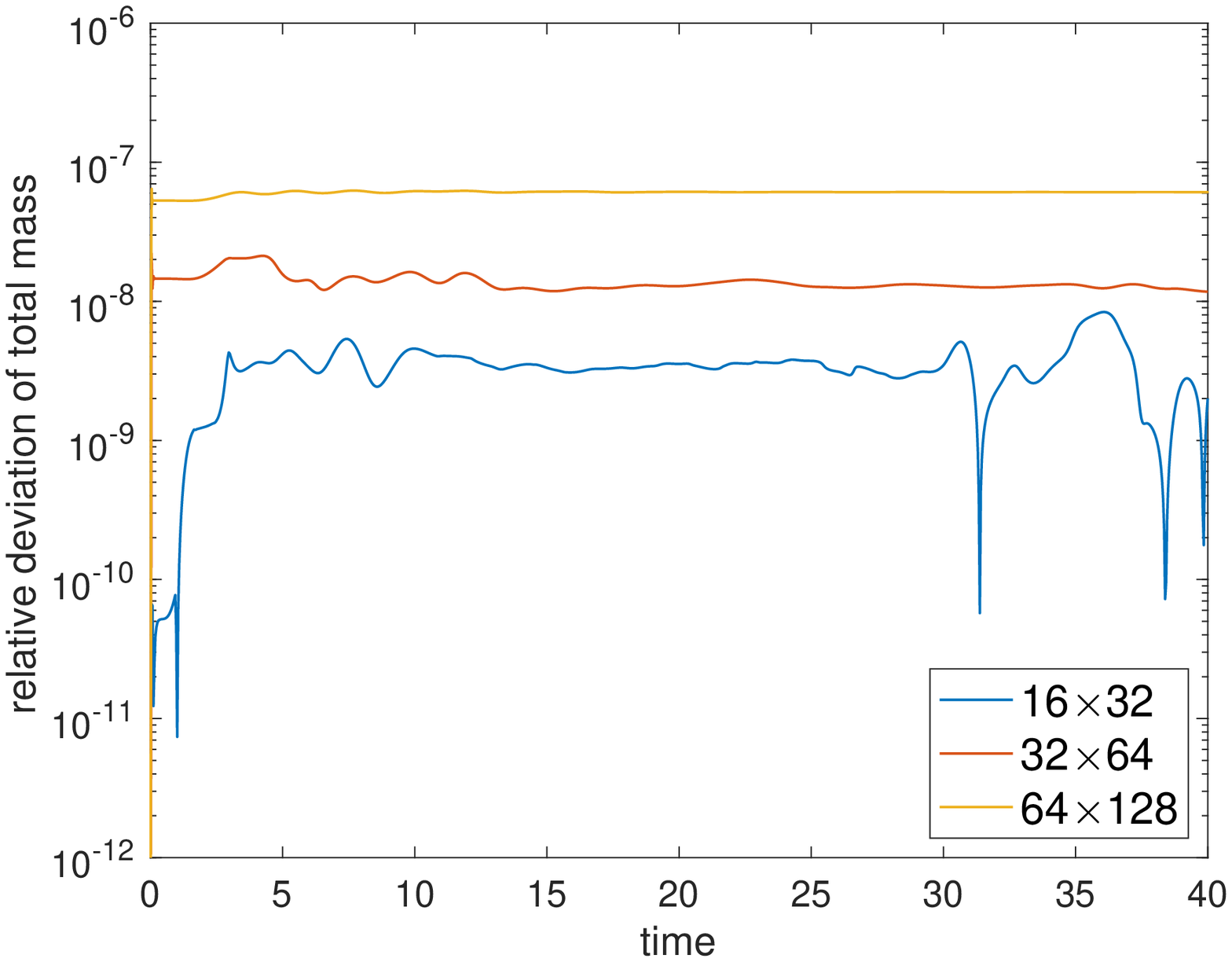}}
		\subfigure[]{\includegraphics[height=40mm]{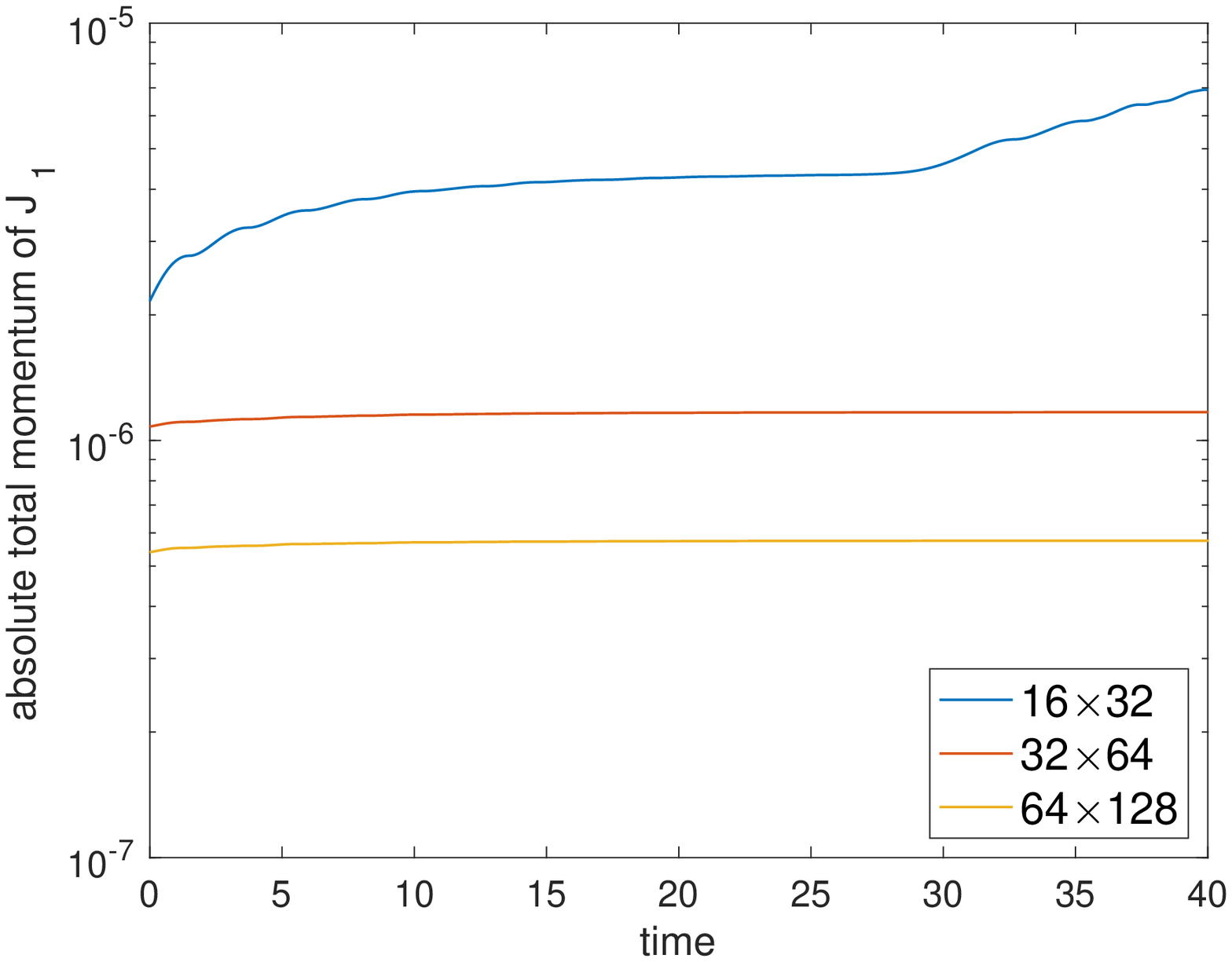}}
			\subfigure[]{\includegraphics[height=40mm]{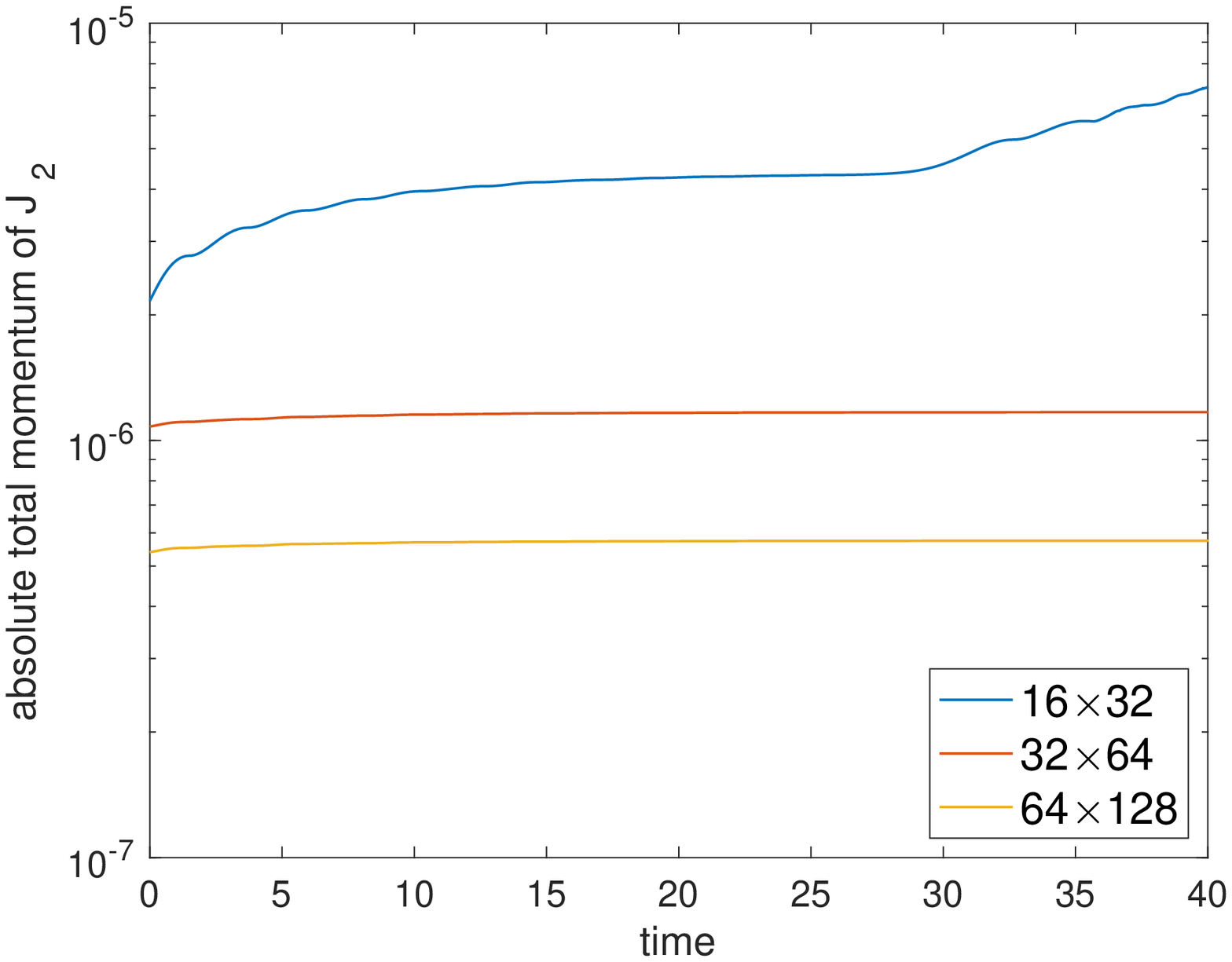}}
		\subfigure[]{\includegraphics[height=40mm]{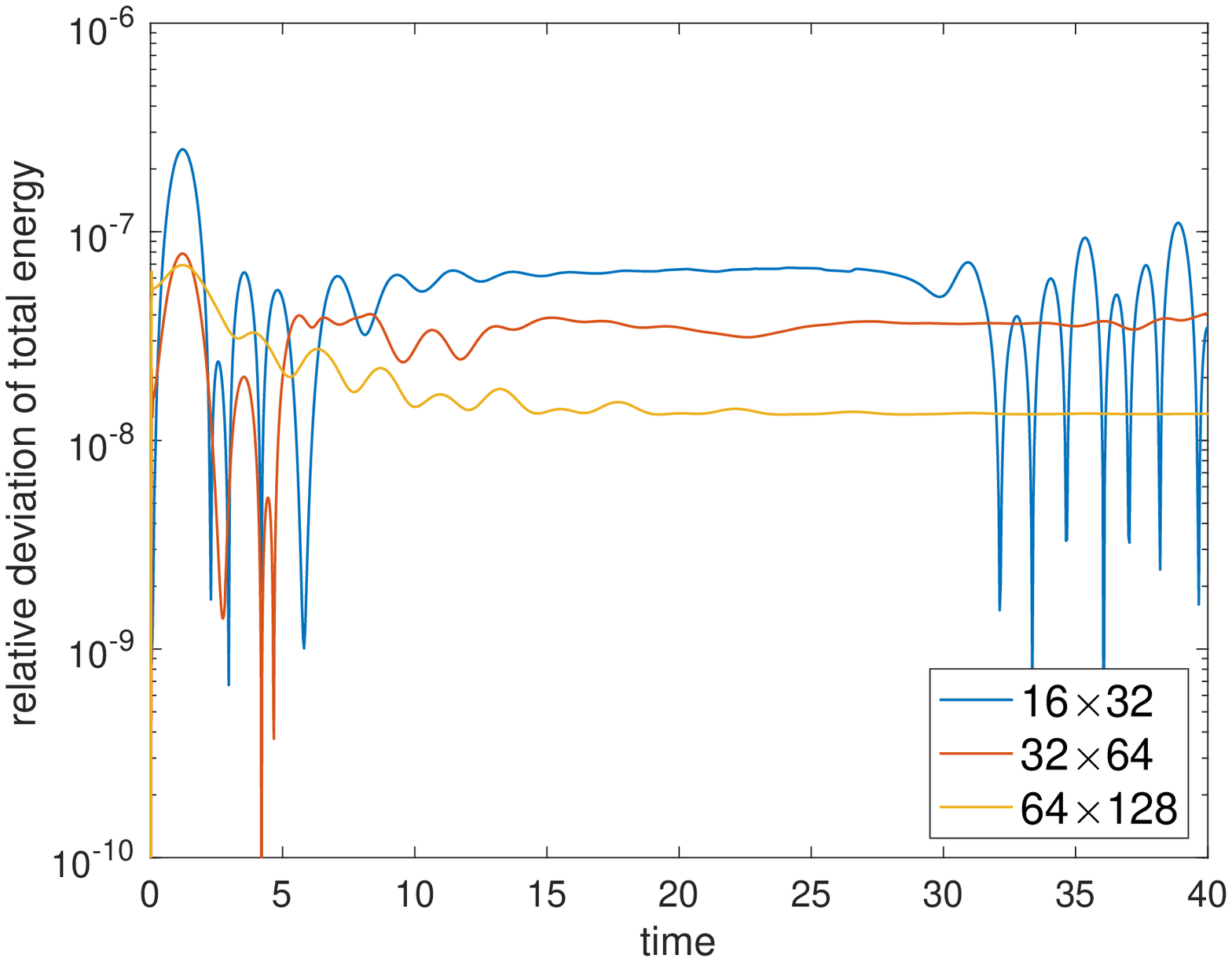}}
	\caption{Example \ref{ex:weak2d}. Non-conservative low rank method. The time evolution of  electric energy (a), hierarchical ranks of the numerical solution  of mesh size $N_x\times N_v=64\times128$ (b), relative deviation of total mass (c), absolute total momentum $J_1$ (d), absolute total momentum $J_2$ (e), and relative deviation of total energy (f). $\varepsilon=10^{-5}$. In (b), $r_{12}$ and $r_{34}$ are close. $r_1$, $r_2$, $r_3$, and $r_4$ are close.}
	\label{fig:weak2d_elec_non}
\end{figure}

  \begin{exa} \label{ex:two2d}We consider the 2D2V two-stream instability with initial condition
\begin{equation}
	\label{eq:two2d}
	f(\bx,\bv,t=0) =\frac{1}{2^d(2 \pi)^{d / 2}} \left(1+\alpha \sum_{m=1}^{d} \cos \left(k x_{m}\right)\right)\prod_{m=1}^d\left(\exp\left(-\frac{(v_m-v_0)^2}{2}\right) + \exp\left( -\frac{(v_m+v_0)^2}{2}\right)\right),
\end{equation}
where $d=2$, $\alpha=0.001$, $v_0=2.4$, and $k=0.2$.  
\end{exa}

The computation domain is set as $[0,L_x]^2\times[-L_v,L_v]^2$, where $L_x=\frac{2\pi}{k}$ and $L_v=8$, and the truncation threshold is set as $\varepsilon=10^{-5}$.  In Figures \ref{fig:two2d_elec_con}-\ref{fig:two2d_elec_non}, respectively for conservative and non-conservative methods, we report the time evolution of the electric energy, hierarchical ranks of the numerical solution,  relative deviation of total mass and energy together with absolute total momentum $J_1$ and $J_2$. The observation is similar to the previous example that the proposed conservative method is able to conserve the total mass and momentum, and meanwhile, the hierarchical ranks of the solution tensor from the conservative method are larger than that from the non-conservative counterpart.

\begin{figure}[h!]
	\centering
	\subfigure[]{\includegraphics[height=40mm]{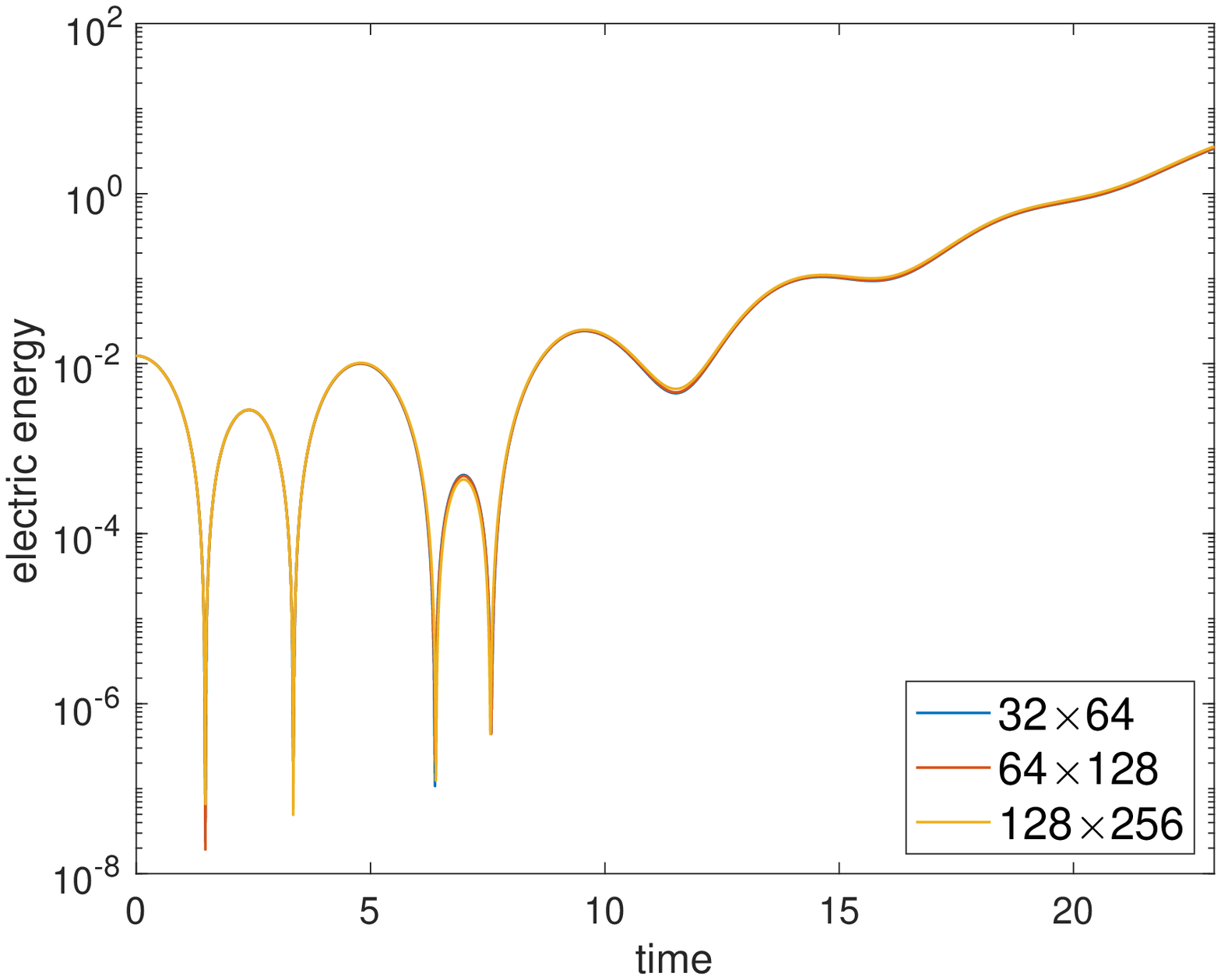}}
		\subfigure[]{\includegraphics[height=40mm]{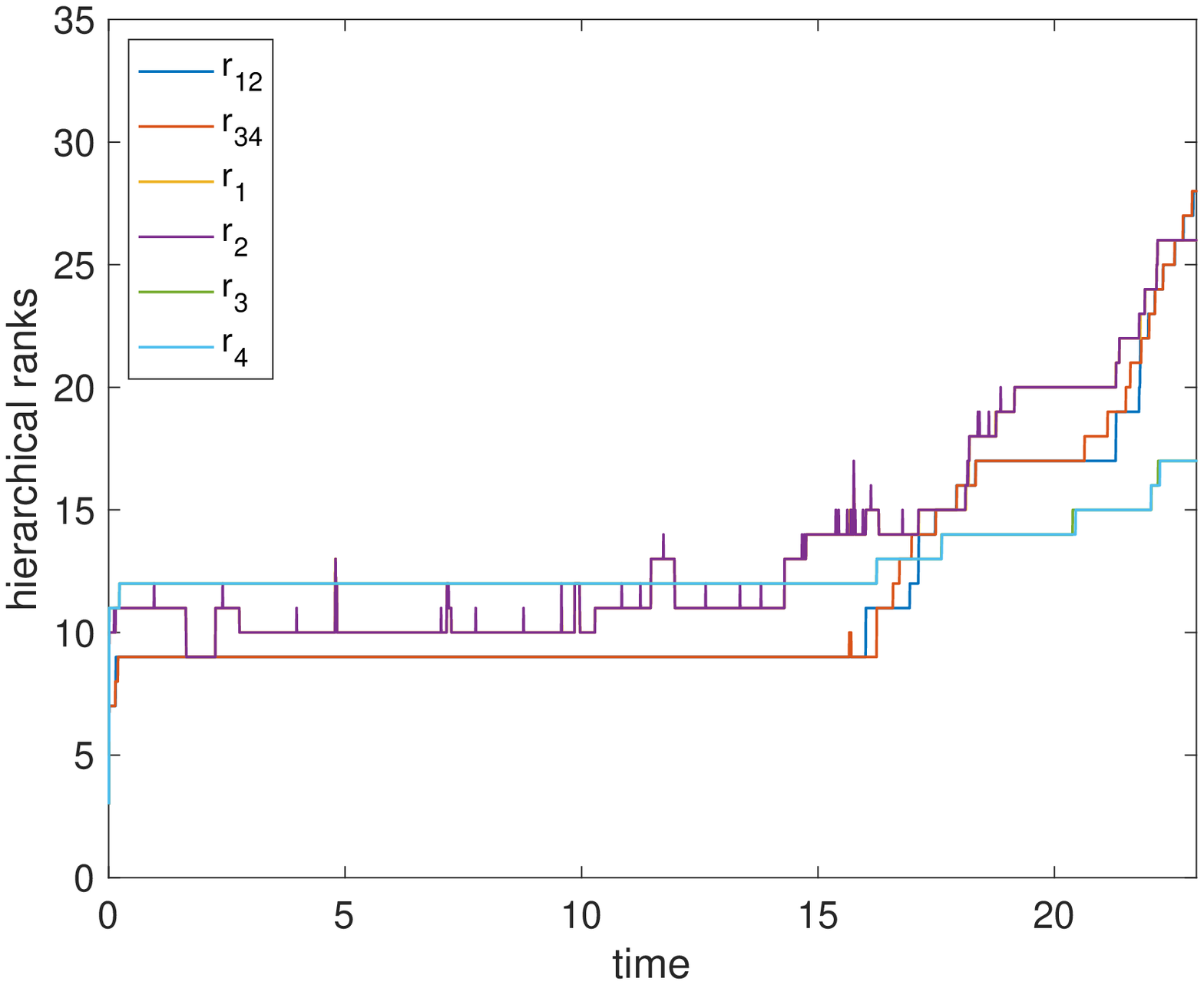}}
		\subfigure[]{\includegraphics[height=40mm]{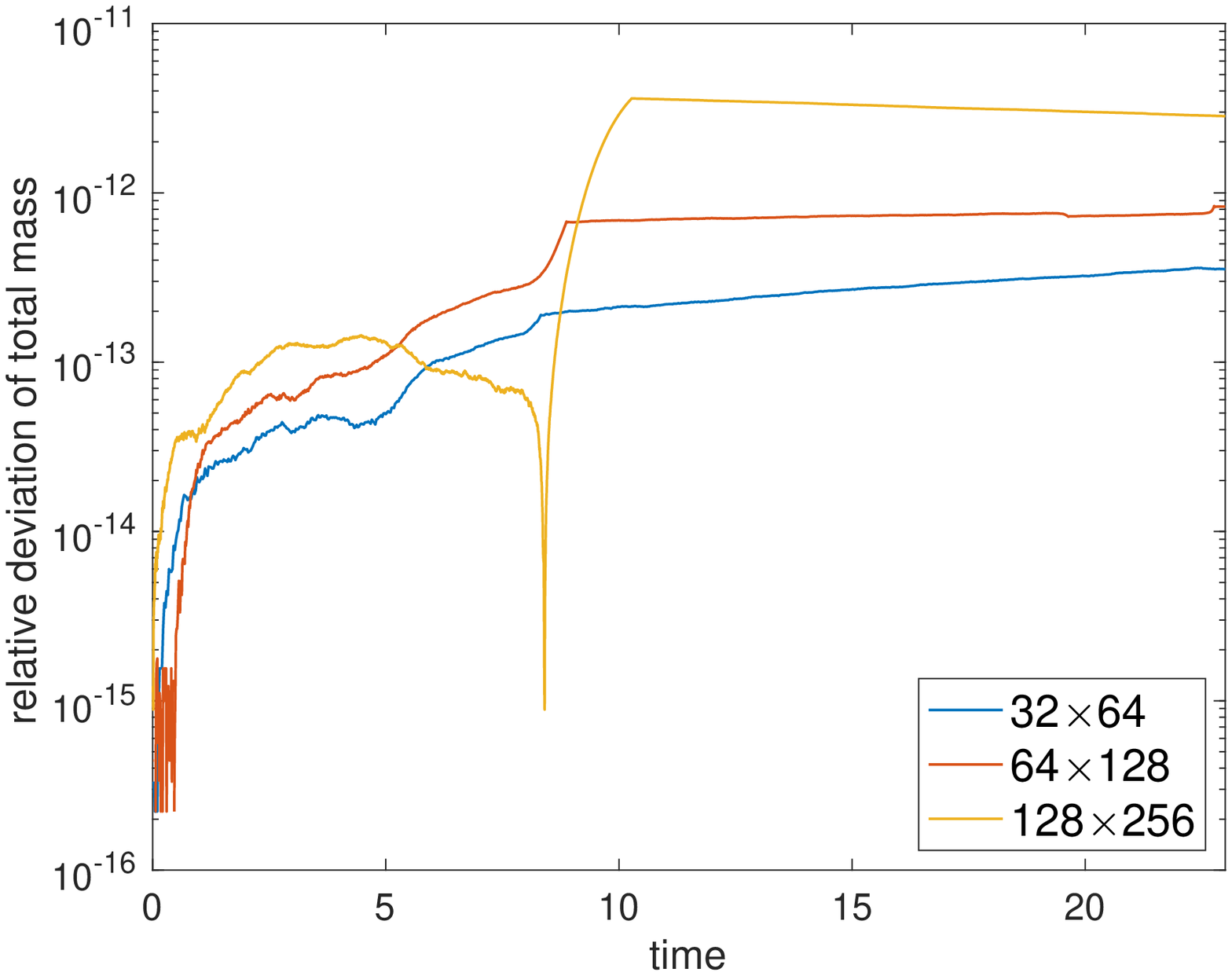}}
		\subfigure[]{\includegraphics[height=40mm]{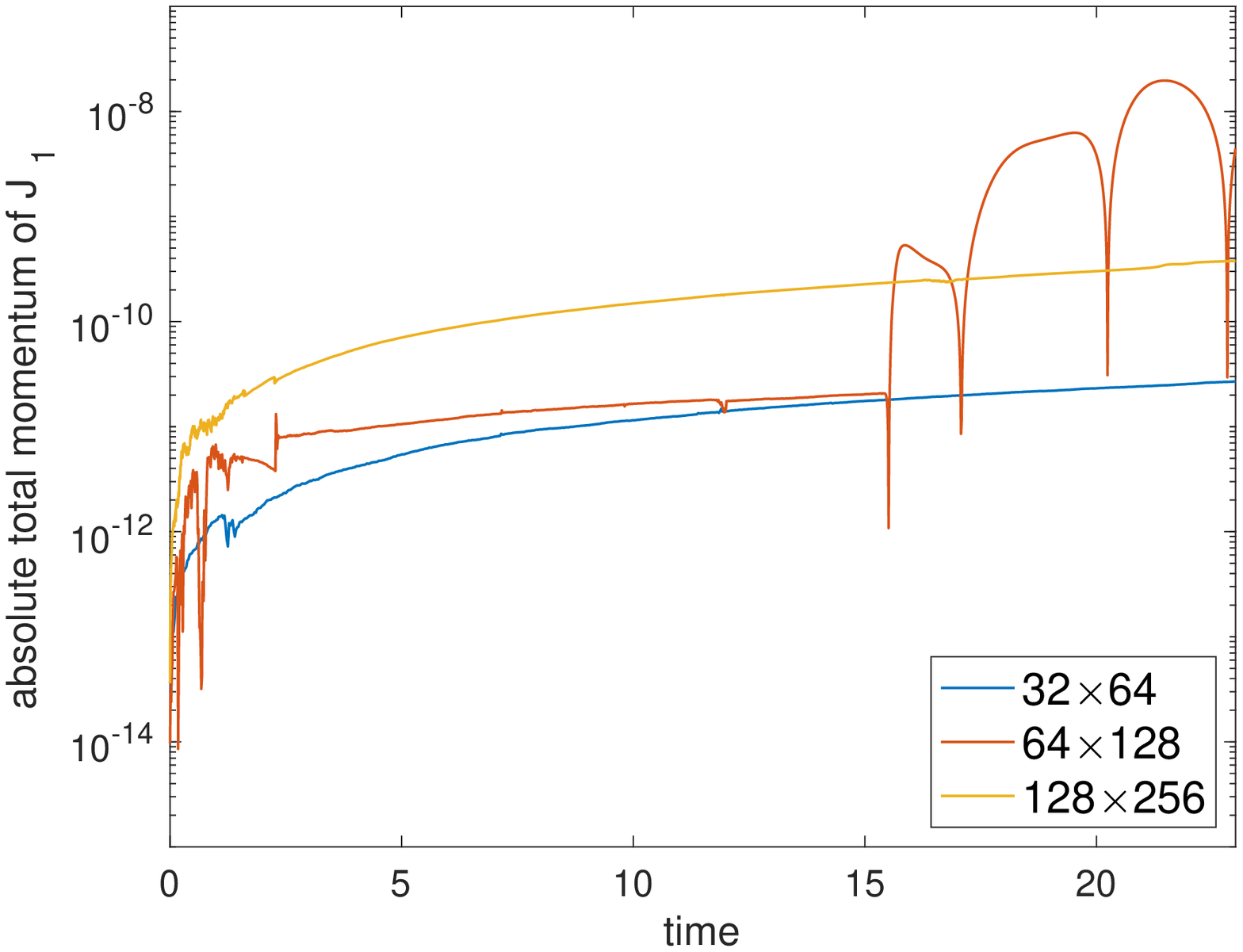}}
			\subfigure[]{\includegraphics[height=40mm]{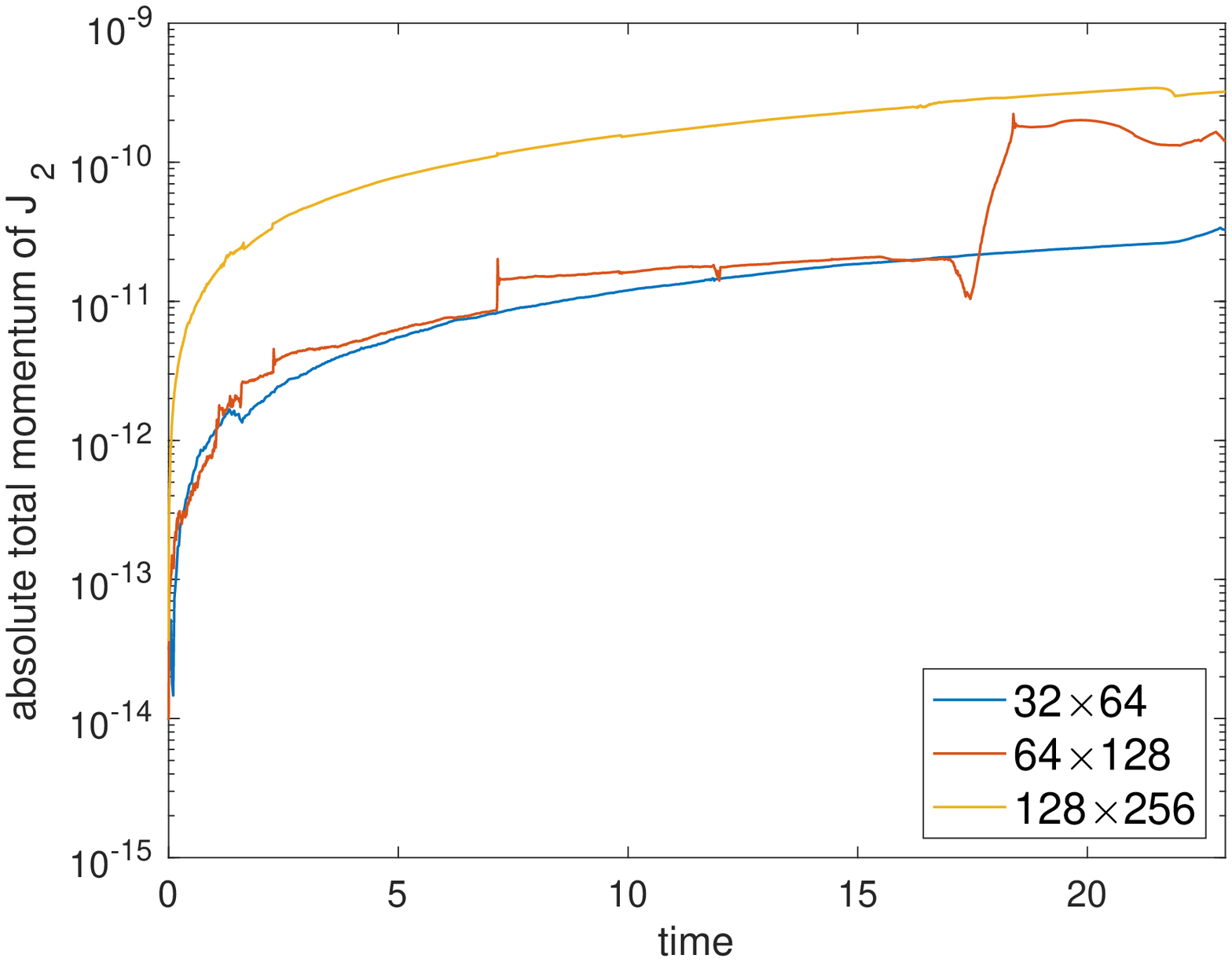}}
		\subfigure[]{\includegraphics[height=40mm]{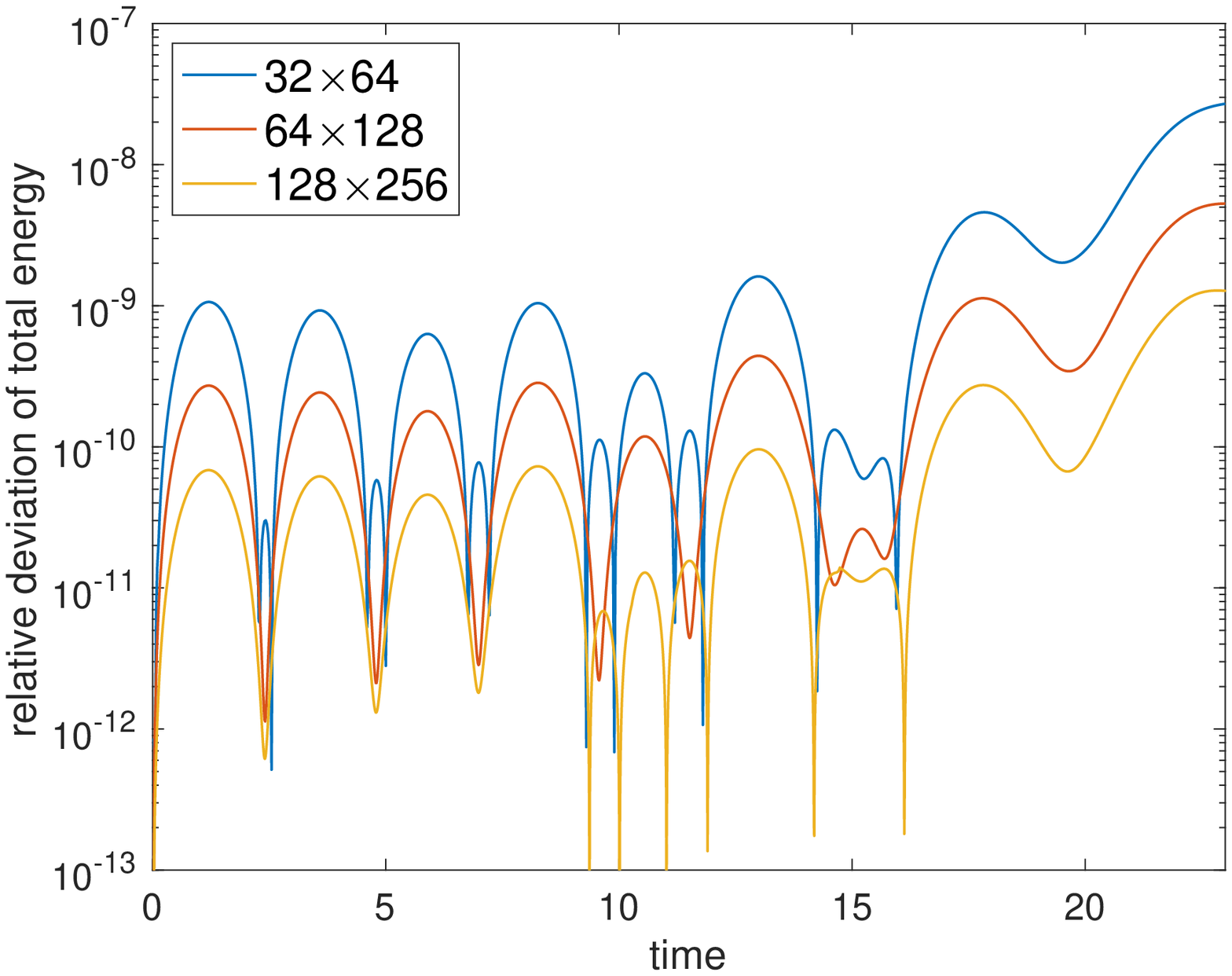}}
	\caption{Example \ref{ex:two2d}. Conservative low rank method. The time evolution of the  electric energy (a), hierarchical ranks of the numerical solutions (b), relative deviation of total mass (c), absolute total momentum $J_1$ (d), absolute total momentum $J_2$ (e), and relative deviation of total energy (f). $\varepsilon=10^{-5}$.}
	\label{fig:two2d_elec_con}
\end{figure}

\begin{figure}[h!]
	\centering
	\subfigure[]{\includegraphics[height=40mm]{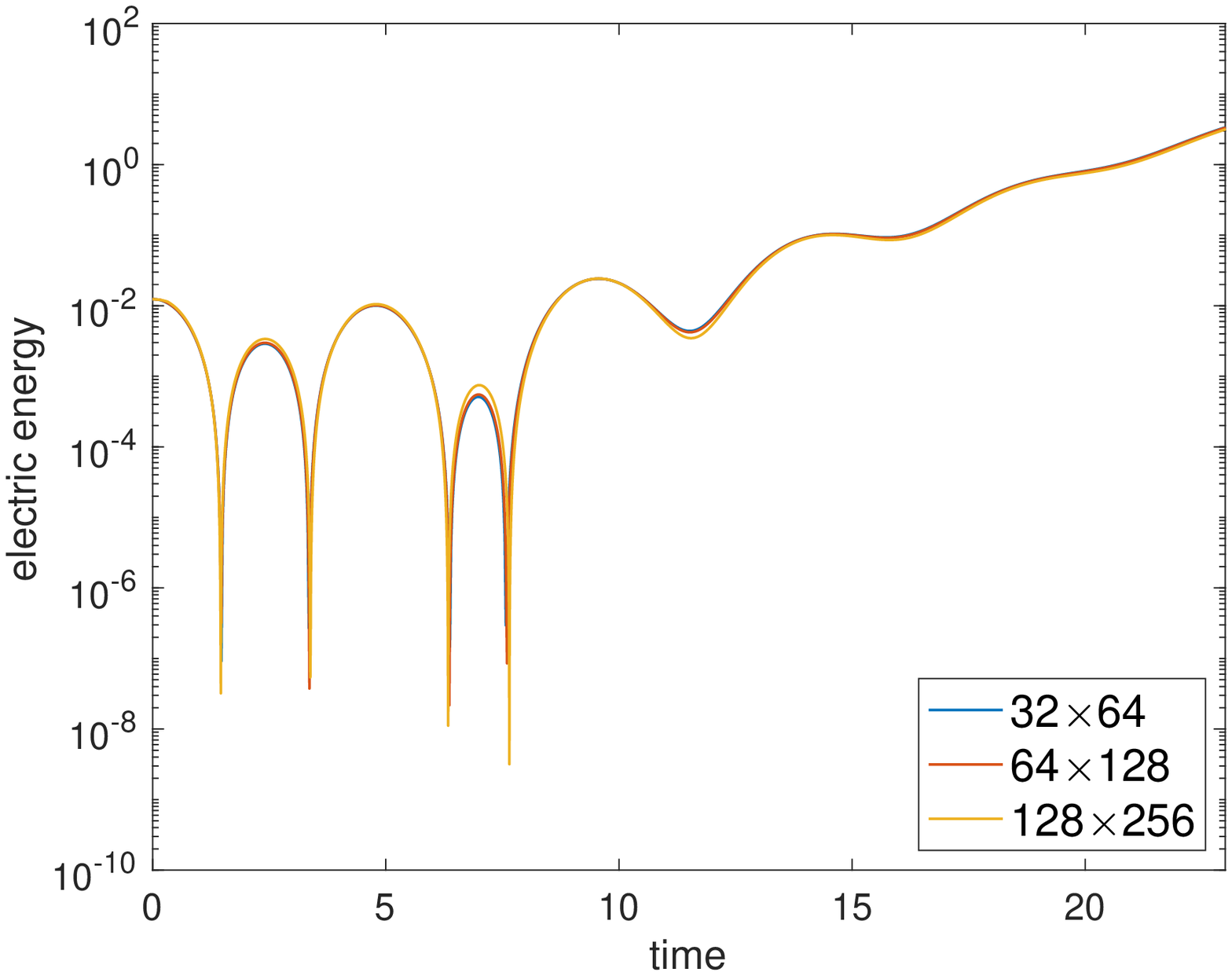}}
		\subfigure[]{\includegraphics[height=40mm]{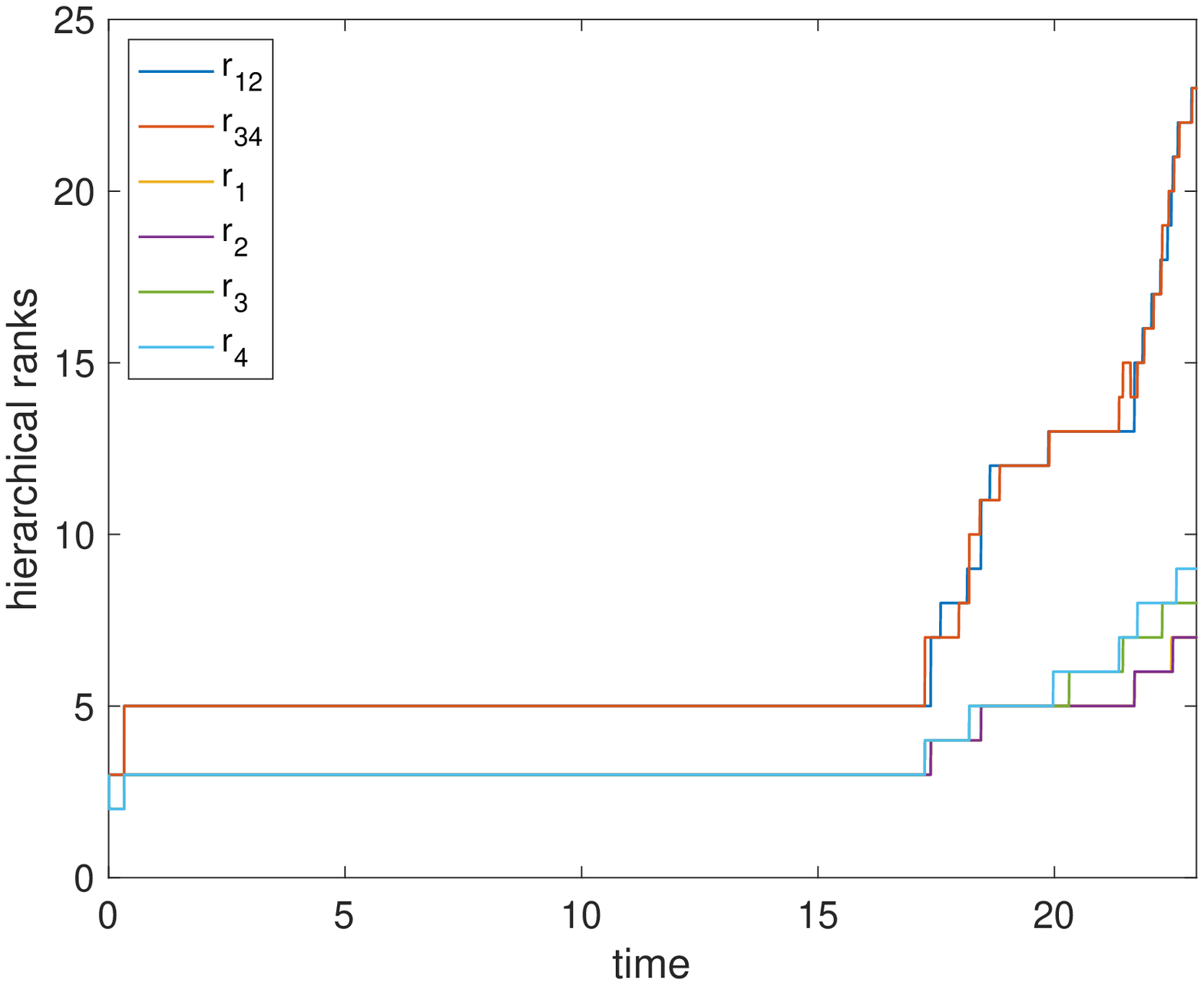}}
		\subfigure[]{\includegraphics[height=40mm]{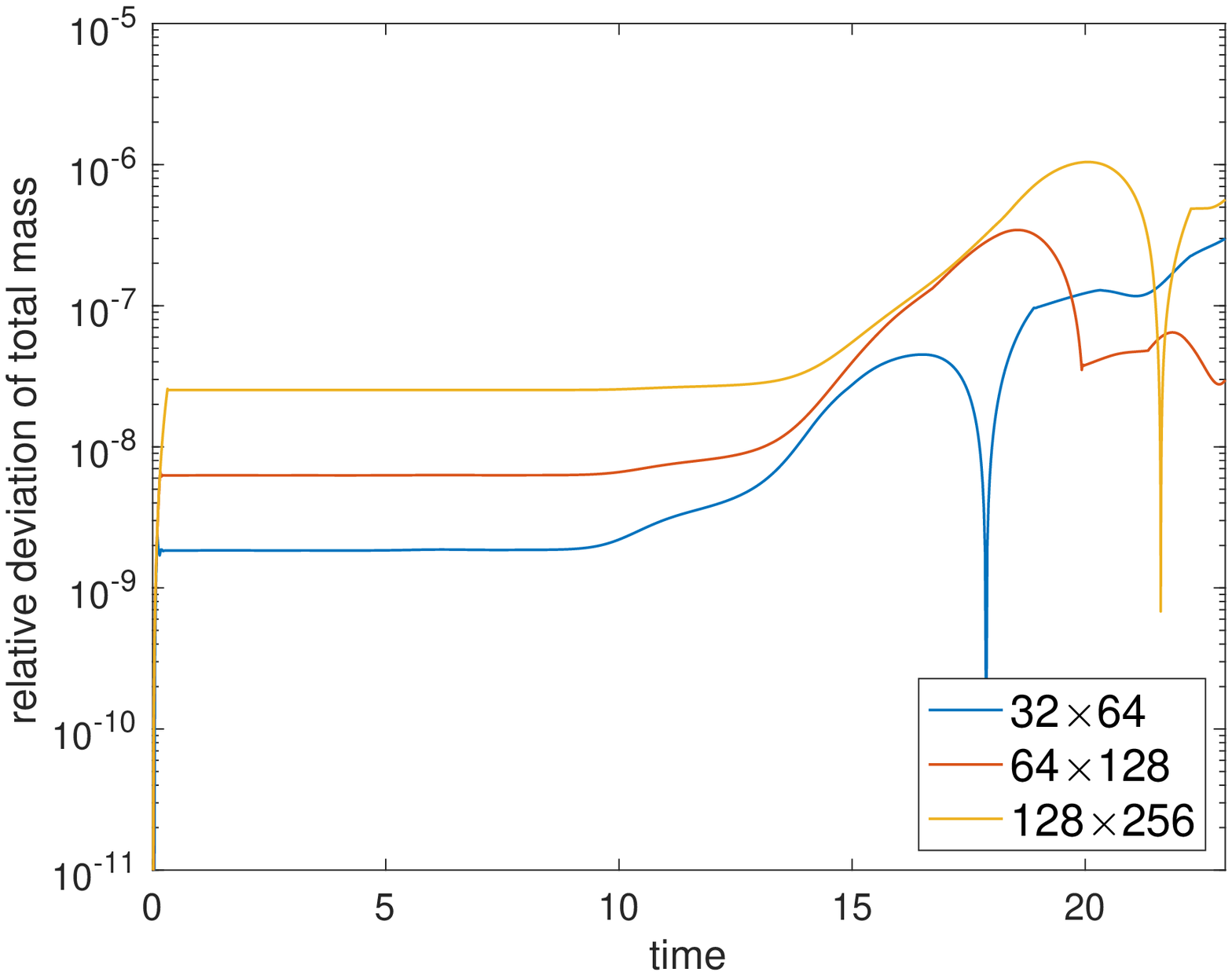}}
		\subfigure[]{\includegraphics[height=40mm]{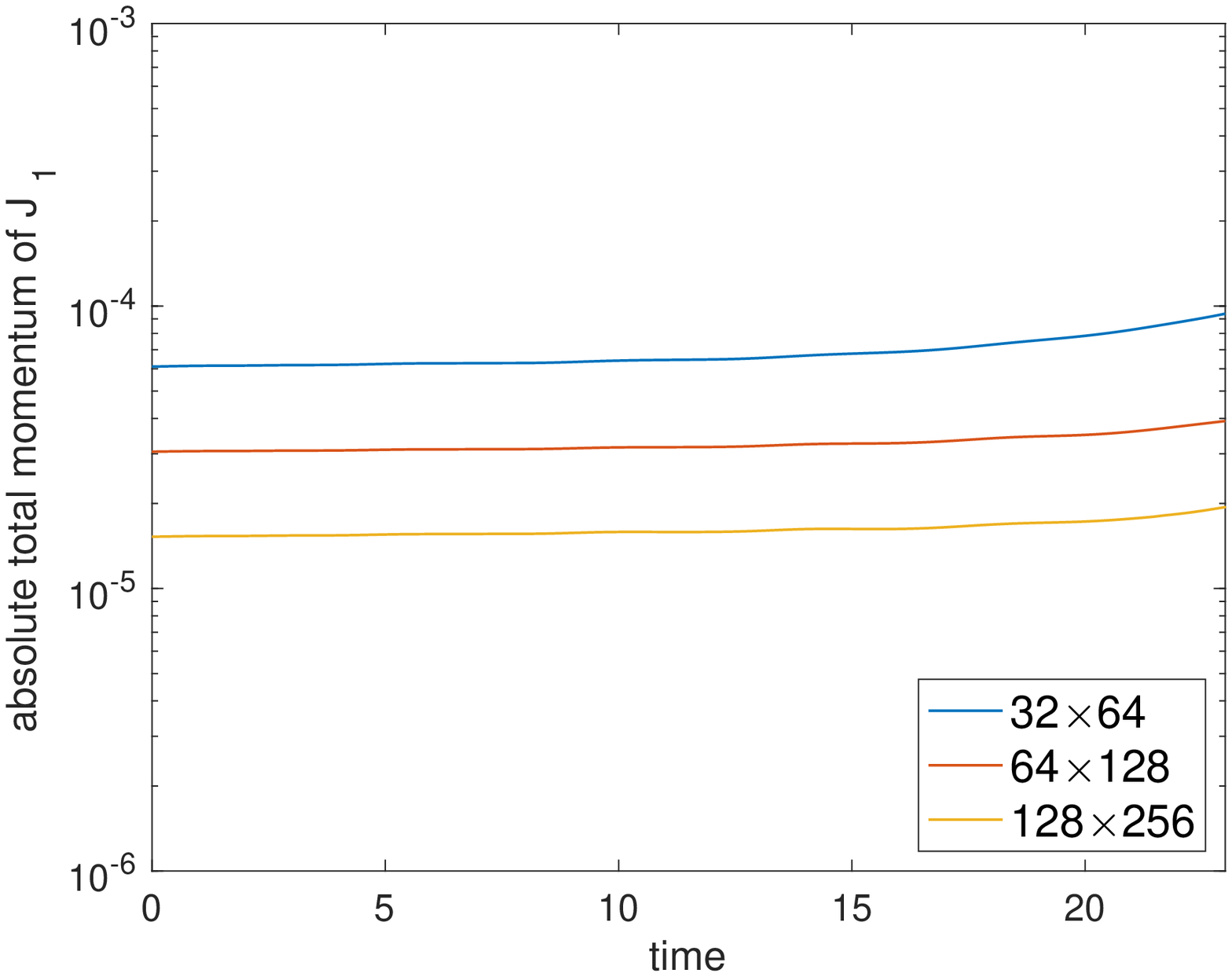}}
			\subfigure[]{\includegraphics[height=40mm]{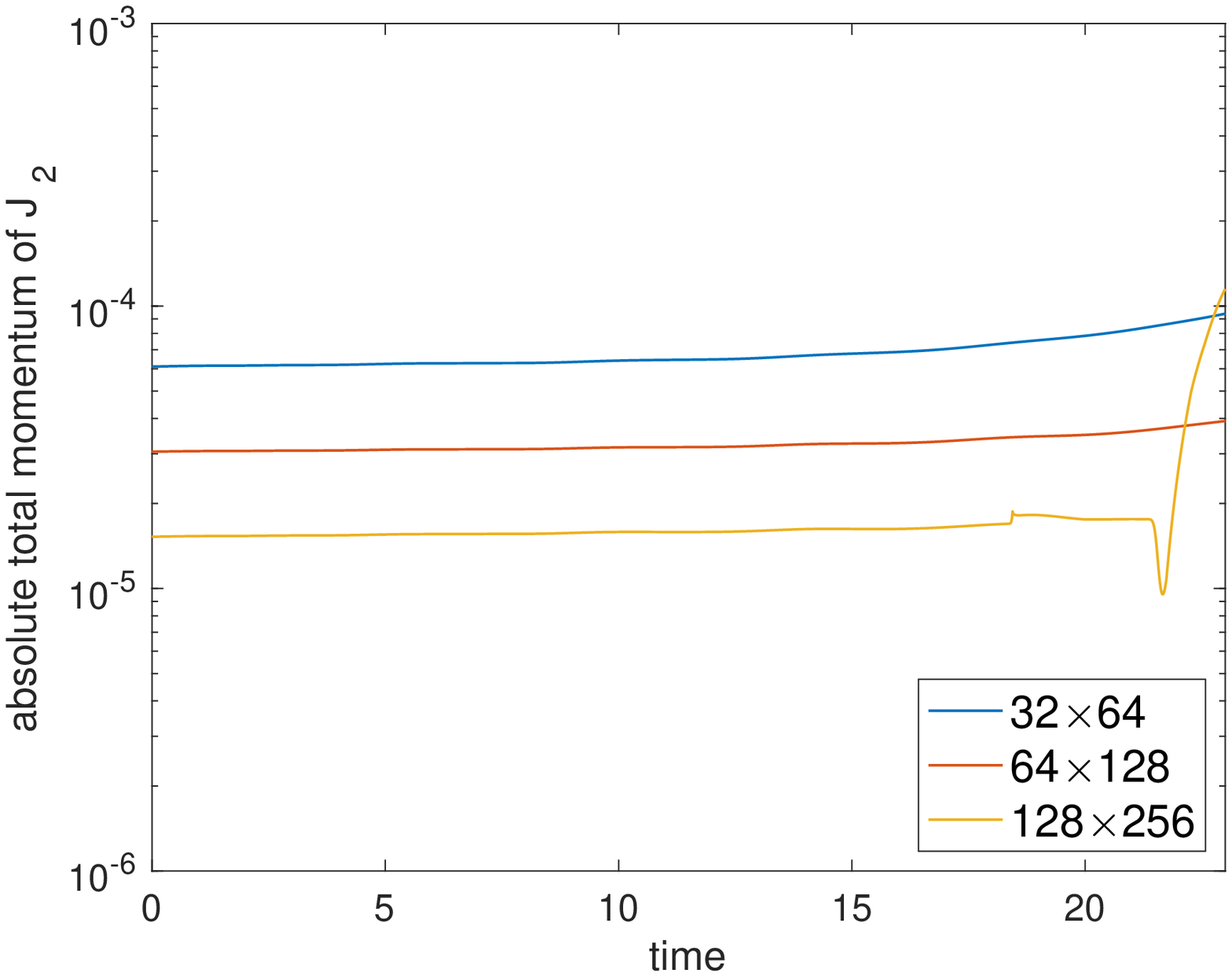}}
		\subfigure[]{\includegraphics[height=40mm]{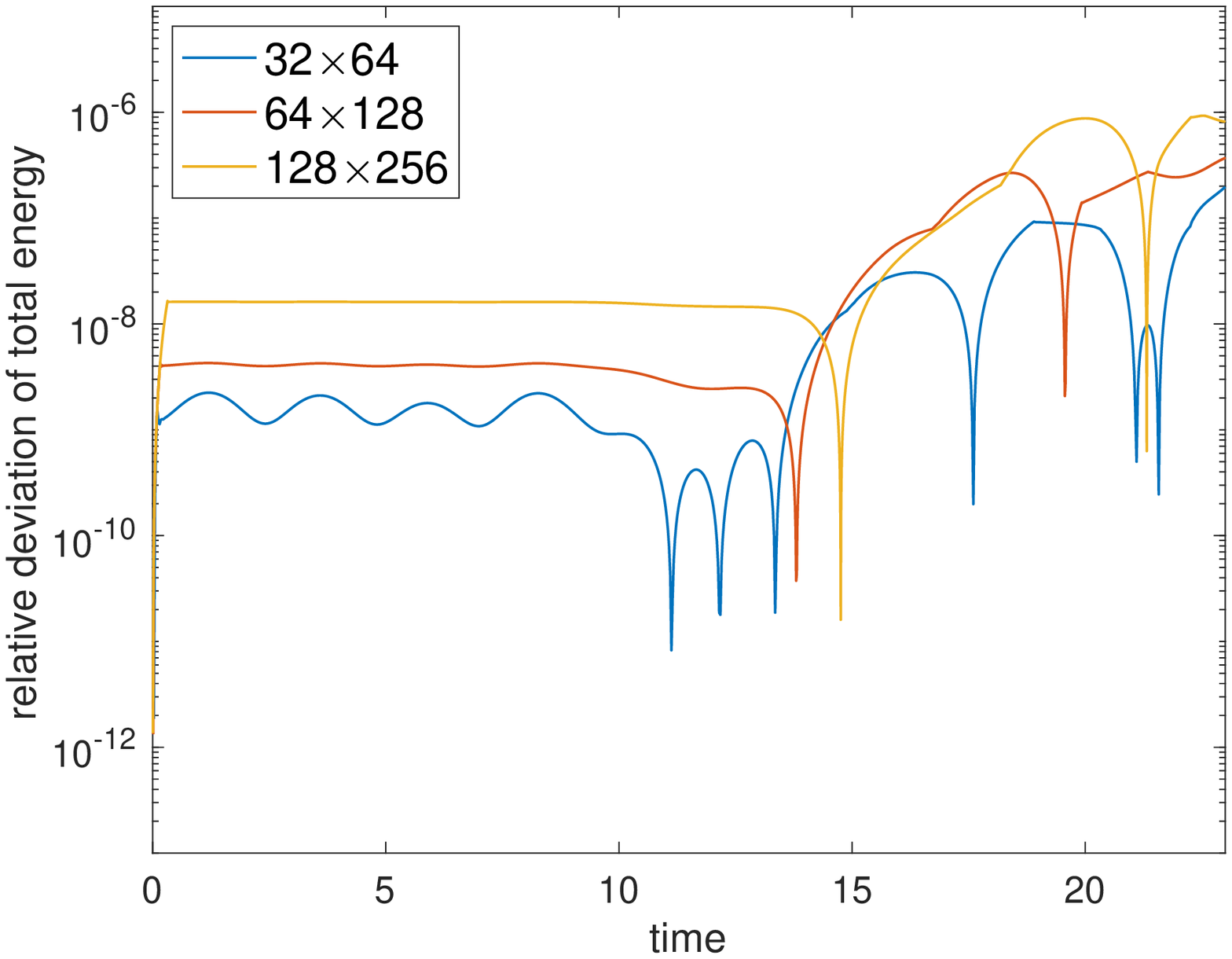}}
	\caption{Example \ref{ex:two2d}. Non-conservative low rank method. The time evolution of  electric energy (a), hierarchical ranks of the numerical solutions (b), relative deviation of total mass (c), absolute total momentum $J_1$ (d), absolute total momentum $J_2$ (e), and relative deviation of total energy (f). $\varepsilon=10^{-5}$.}
	\label{fig:two2d_elec_non}
\end{figure}

\section{Conclusion}
\setcounter{equation}{0}
\setcounter{figure}{0}
\setcounter{table}{0}

In this paper, we proposed a conservative truncation procedure for a low-rank tensor approach for performing a grid-based Vlasov simulations. The basic idea is initialized in the 1D1V setting, and is further developed to the 2D2V setting with the HT tensor decompositions. The newly developed conservative low rank tensor algorithm is theoretically proved to be a locally conservative scheme to the macroscopic equations for charge and current densities, and is numerically verified to globally conserve the total charge and current. Further development of the low rank tensor algorithm with local energy conservation is subject to our future work.


\bibliographystyle{abbrv}
\bibliography{refer,ref_guo,ref_cheng,ref_cheng_2}

\end{document}